\DeclareMathOperator{\rank}{rank}
\DeclareMathOperator{\ord}{ord}
\DeclareMathOperator{\N}{\mathcal{N}}
\DeclareMathOperator{\vol}{vol}
\DeclareMathOperator{\ind}{ind}
\newcommand{\bbeta}{\boldsymbol{\beta}}
\newcommand{\balpha}{\boldsymbol{\alpha}}
\newcommand{\Q}{\mathbb{Q}}
\newcommand{\Qbar}{\overline{\Q}}
\newcommand{\C}{\mathbb{C}}
\newcommand{\R}{\mathbb{R}}
\newcommand{\Z}{\mathbb{Z}}
\newcommand{\F}{\mathbb{F}}
\newcommand{\A}{\mathcal{A}}
\newcommand{\B}{\mathcal{B}}
\newcommand{\D}{\mathcal{D}}
\newcommand{\E}{\mathcal{E}}
\newcommand{\T}{\mathcal{T}}
\newcommand{\OO}{\mathcal{O}}
\newcommand{\h}{\hat{h}}
\newcommand{\leg}[2]{\left(\frac{#1}{#2}\right)}
\newcommand{\One}[1]{{\mathds{1}}_{\{#1\}}}
\newtheorem{theorem}{Theorem}[section]
\newtheorem{lemma}[theorem]{Lemma}
\begin{document}

\title{Integral points on the congruent number curve}
\author{Stephanie Chan}
\email{ytchan@umich.edu}
\address{Department of Mathematics, University College London, Gower Street, London, WC1E~6BT, United Kingdom}
\subjclass[2010]{11D45, 11G05}

\begin{abstract}
We study integral points on the quadratic twists $\E_D:y^2=x^3-D^2x$ of the congruent number curve.
We give upper bounds on the number of integral points in each coset of $2\E_D(\Q)$ in $\E_D(\Q)$ and show that their total is $\ll (3.8)^{\rank \E_D(\Q)}$.
We further show that the average number of non-torsion integral points in this family is bounded above by $2$.
As an application we also deduce from our upper bounds that the system of simultaneous Pell equations
$aX^2-bY^2=d,\ bY^2-cZ^2=d$ for pairwise coprime positive integers $a,b,c,d$, has at most $\ll (3.6)^{\omega(abcd)}$ integer solutions.
\end{abstract}

\maketitle

\section{Introduction}
For any squarefree positive integer $D$, we consider the elliptic curve
\[\E_D:y^2=x^3-D^2x.\]
We are interested in the set of integral points on the curve, defined as 
\[\E_D(\Z)\coloneqq \left\{(x,y)\in\Z^2: y^2=x^3-D^2x\right\}.\]

Given an elliptic curve with Weierstrass equation $y^2=x^3+Ax+B$, Siegel~\cite{Siegelint} proved that there are only finitely many integral points, using techniques from the theory of Diophantine approximation.
Baker~\cite[p.45]{Baker} gave the first effective bound on the height of integral points: if an integral point $(x,y)$ exists, then 
\[|x|\leq \exp\big((10^6\max\{A,B\})^{10^6}\big).\]
Lang~\cite[p.140]{LangDioph} conjectured that the number of integral points on a (quasi)minimal Weierstrass equation of an elliptic curve should be bounded only in terms of its rank.
This was proven for elliptic curves with integral $j$-invariant~\cite[Theorem~A]{SilvermanSie} and for elliptic curves with bounded Szpiro ratio~\cite[Theorem~0.7]{HS}. The curves $\E_D$ satisfy both of these properties, and more specifically the theorems show that there exists some constant $C$ that is computable but not made explicit, such that
\begin{equation}\label{eq:SHS}
 \#\E_D(\Z)\ll C^{\rank \E_D(\Q)}.
\end{equation}
In \cite{GS}, Gross and Silverman presented a bound with explicit constants, which in our case gives $C$ of order $10^9$ in \eqref{eq:SHS}. The bound was improved in \cite{CLT}, which implies $C=24$ in \eqref{eq:SHS}. In \cite[Theorem 2]{CZ}, the bound in \cite[Theorem~0.7]{HS} was recovered by a different method and with some additional precision.
From a more general theorem by Helfgott and Venkatesh~\cite[Corollary 3.11]{HV}, we can deduce that 
\[\#\E_D(\Z)\ll C^{\omega(D)}(\log D)^2 (1.33)^{\rank \E_D(\Q)},\]
where $C$ is some absolute constant and $\omega(D)$ denotes the number of distinct prime factors of the integer $D$.
We obtain an upper bound in the form of \eqref{eq:SHS} with a smaller and explicit base, specifically for the curves $\E_D$.
\begin{theorem}\label{thm:posint}
We have
\[\#\E_D(\Z)\ll (3.8)^{\rank \E_D(\Q)}.\]
\end{theorem}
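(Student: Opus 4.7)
The approach is a $2$-descent followed by a per-coset bound. Because the $2$-torsion of $\E_D$ is fully rational — namely $\{O, (0,0), (\pm D, 0)\}$ — the group $\E_D(\Q)/2\E_D(\Q)$ has exact order $2^{r+2}$, where $r = \rank \E_D(\Q)$. This quotient is realized concretely by the $2$-descent map
\[
\delta \colon \E_D(\Q)/2\E_D(\Q) \hookrightarrow (\Q^\times/\Q^{\times 2})^3, \qquad (x,y) \mapsto ([x],[x-D],[x+D]),
\]
and I would partition $\E_D(\Z)$ into the fibres of $\delta$. For integral points, a standard coprimality argument applied to $x(x-D)(x+D) = y^2$ forces the three $\delta$-coordinates to be represented by divisors of $2D$; this will be the key arithmetic constraint.

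The heart of the proof is a per-coset bound: the goal is to show that each coset of $2\E_D(\Q)$ in $\E_D(\Q)$ contains only $O(1)$ integral points, uniformly in $D$. Two integral points $P_1, P_2$ in the same coset satisfy $P_1 - P_2 = 2Q$ for some $Q \in \E_D(\Q)$; this doubling relation, combined with an archimedean ``repulsion'' argument in the spirit of Helfgott--Venkatesh adapted to the congruent number family, limits the number of such coset-mates. Equivalently, one can view integral points in a fixed fibre of $\delta$ as integer solutions of a simultaneous Pell system $aX^2 - bY^2 = D$, $bY^2 - cZ^2 = -D$ attached to the square-class triple, and apply explicit bounds on such systems (the Pell application in the abstract signals that this is the more efficient route).

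Combining a constant per-coset bound $c$ with the cardinality $2^{r+2}$ yields $\#\E_D(\Z) \leq 4c \cdot 2^r \ll (3.8)^r$, where the slack between $2$ and $3.8$ absorbs $4c$ into the $\ll$ constant. The main obstacle I anticipate is precisely this $D$-uniformity of the per-coset bound: the general Helfgott--Venkatesh theorem loses a factor $C^{\omega(D)}(\log D)^2$, and removing this requires the explicit $2$-descent for the congruent number curve together with a careful archimedean analysis on the two real components of $\E_D(\R)$. In particular, one must handle the case where $Q$ is so small that $P_1$ and $P_2$ become archimedean-close, which forces an additional non-archimedean (reduction-theoretic) input at the primes dividing $2D$ in order to separate them.
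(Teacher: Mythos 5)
There is a genuine gap: your per-coset bound of $O(1)$ is false, and the structure of the final exponent $(3.8)^r$ should have tipped you off. If each of the $2^{r+2}$ cosets really contained a bounded number of integral points, the theorem would read $\#\E_D(\Z)\ll 2^r$ — a strictly stronger statement, and one in which the ``slack'' you invoke would be nonsense, since a constant factor $4c$ cannot be absorbed by changing an exponential base. The actual per-coset count in the paper (Theorem~\ref{case:positive1}) is $<30+(1.89)^{r+19r^{1/3}}$, which is \emph{itself} exponential in the rank; multiplying this by $\#\E_D(\Q)/2\E_D(\Q)=2^{r+2}$ produces $4\cdot(3.78)^{r}\cdot(1.89)^{19r^{1/3}}\ll(3.8)^r$, and that is where $3.8$ comes from.

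The reason a coset can hold exponentially many integral points is a scale mismatch that your archimedean-repulsion sketch does not address. Integral points with $x\leq D^{2(1+\epsilon)}$ all have canonical height at most $\approx 2(1+\epsilon)\log D$, while any two points in a fixed coset of $2\E_D(\Q)$ differ by $2Q$ and are therefore separated by at least $\approx\sqrt{\log D}$ in the $\hat h^{1/2}$-norm. So within a single coset you are packing points of norm $O(\sqrt{\log D})$ with pairwise separation comparable to $\sqrt{\log D}$ inside a lattice of rank $r$: a spherical-code problem whose answer is $A(r+1,\theta)$ with $\sin(\theta/2)\approx\tfrac{1}{2\sqrt{2(1+\epsilon)}}$, and this grows like $(1.89)^r$, not $O(1)$. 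The $D$-uniform $O(1)$ bound only materializes for the truly large points ($x>D^{2(1+\epsilon)}$), via a gap principle plus Roth's theorem over the $4$-division field $K$ — and even there the constant is $30$ unconditionally (or $0$ under $abc$). Your proposed reduction to simultaneous Pell systems also runs backwards: the paper \emph{derives} its Pell bound from the per-coset bound, and the existing explicit Pell bounds in the literature (Bennett; Bugeaud--Levesque--Waldschmidt) carry $\omega$-dependent or $\log$-dependent factors, precisely the dependence the theorem is designed to eliminate, so that route cannot supply the needed uniformity.
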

Therefore if we expect the rank to be uniformly bounded for all $\E_D(\Q)$ (as has been recently conjectured by various authors), then there would be a squarefree positive integer $D$ such that $\#\E_D(\Z)$ attains its maximum.

We proceed by partitioning $\E_D(\Z)$ into cosets of $2\E_D(\Q)$. 
For any $R\in \E_D(\Q)$, define 
\[
\mathcal{Z}_D(R)\coloneqq \E_D(\Z)\cap (R+2\E_D(\Q)).\]
We obtain an upper bound on the size of each $\mathcal{Z}_D(R)$ in terms of the rank of $\E_D(\Q)$:

\begin{theorem} \label{case:positive1} 
If $D$ is sufficiently large and $R\in \E_D(\Q)$ then 
 \[\#\mathcal{Z}_D(R)< 30+(1.89)^{r+19r^{1/3}},\]
where $r\coloneqq \rank \E_D(\Q)$.
\end{theorem}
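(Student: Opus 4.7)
The plan is to parametrise integral points in a single coset of $2\E_D(\Q)$ via $2$-descent, prove a height-gap principle, and then apply a sphere-packing count in the Mordell--Weil lattice. Throughout, let $\Lambda:=\E_D(\Q)\otimes\R$ be equipped with the N\'eron--Tate inner product, and write $\theta(P_1,P_2)$ for the resulting angle between two non-torsion elements.

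\textbf{Step 1 (Descent).} For $(x,y)\in\E_D(\Z)$ with $y\neq 0$, the coprimality $\gcd(x,x\pm D)\mid D$ together with $x(x-D)(x+D)=y^2$ forces a factorisation
\[x=au^2,\qquad x-D=bv^2,\qquad x+D=cw^2,\]
with $a,b,c$ squarefree divisors of $2D$ satisfying $abc\in\Q^{*2}$. The assignment $P\mapsto(a,b,c)\bmod\Q^{*2}$ is the classical $2$-descent map $\E_D(\Q)/2\E_D(\Q)\hookrightarrow(\Q^{*}/\Q^{*2})^3$, hence constant on $\mathcal{Z}_D(R)$. So every $P\in\mathcal{Z}_D(R)$ yields a positive integer solution of the single simultaneous Pell-like system
\[au^2-bv^2=D,\qquad cw^2-au^2=D\]
for the fixed triple $(a,b,c)$ determined by $R$, and counting $\mathcal{Z}_D(R)$ reduces to counting such solutions.

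\textbf{Step 2 (Gap principle).} For $P_1,P_2\in\mathcal{Z}_D(R)$, write $P_1-P_2=2Q\in 2\E_D(\Q)$. The parallelogram law gives
\[4\hat h(Q)=\hat h(P_1)+\hat h(P_2)-2\sqrt{\hat h(P_1)\hat h(P_2)}\cos\theta(P_1,P_2),\]
and for integral points $\hat h(P)=\tfrac12\log x(P)+O_D(1)$. Using the elliptic addition law to express $x(Q)$ in terms of $x_i,y_i$, combined with the explicit shape of integral solutions from Step~1, I would establish the quantitative claim: if $\hat h(P_1)\leq\hat h(P_2)$, $\hat h(P_1)\gg\log D$, and $\theta(P_1,P_2)\geq\theta_0$, then $\hat h(P_2)\geq\kappa(\theta_0)\hat h(P_1)$ for an explicit increasing $\kappa\colon(0,\pi)\to(1,\infty)$. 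This is the $\E_D$-specific analogue of the Helfgott--Venkatesh lattice-repulsion principle, sharpened by the fact that $\E_D[2]\subset\E_D(\Q)$.

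\textbf{Step 3 (Sphere-packing count).} Cover the unit sphere of $\Lambda$ by spherical caps of half-angle $\theta_0$; a Kabatyanskii--Levenshtein-type bound shows $N(\theta_0)^r e^{O(r^{1/3})}$ caps suffice. Within each cap, Step~2 forces the heights of points in $\mathcal{Z}_D(R)$ to form a geometric sequence with ratio at least $\kappa(\theta_0)$, while a Baker-type upper bound $\hat h(P)\leq B$ together with the positive lattice minimum $\lambda_1(\Lambda)$ makes the number of such heights explicit. Optimising $\theta_0$, and absorbing the $\log B$ factor into the $e^{O(r^{1/3})}$ correction via a separate treatment of small-height points, produces the base $1.89$ and the correction exponent $19r^{1/3}$; the additive $30$ absorbs torsion, small-rank base cases, and boundary contributions.

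\textbf{Main obstacle.} The decisive step is Step~2: any quantitative gap already yields some exponential bound, but pinning down the base $1.89$ demands a sharp trade-off between $\theta_0$ and $\kappa(\theta_0)$. This forces a careful local-height analysis at the primes dividing $2D$ and a quantitative, rather than merely qualitative, use of the Pell-like system of Step~1, controlling the numerator and denominator of $x(Q)$ rather than only $\hat h(Q)$. Step~3 is then a mechanical sphere-packing optimisation once the gap constants are in hand.
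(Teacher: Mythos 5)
The decisive difference between your proposal and the paper is how the \emph{large} integral points are disposed of, and this is where your argument has a genuine gap. You propose to cut off the height of integral points with a Baker-type bound $\hat h(P)\leq B$ and then count, within each spherical cap, the geometric sequence of heights produced by your gap principle. For $\E_D$, Baker gives $\log|x|\leq (10^6 D^2)^{10^6}$, so $B$ is a fixed power of $D$. The repulsion constant $\kappa(\theta_0)$ (or even the triple-gap of the form $\hat h(P_{s})\gtrsim 3^{s-1}\hat h(P_1)$ that one can actually prove) is absolute, so the number of heights in $[\lambda_1(\Lambda),B]$ per cap is at least of order $\log\log D$ and, under your stated multiplicative gap, of order $\log D$. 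That quantity diverges as $D\to\infty$ for fixed $r$, so it cannot be ``absorbed into $e^{O(r^{1/3})}$'' no matter how the small-height points are treated separately; you would at best recover something in the spirit of Helfgott--Venkatesh, $\ll C^{\omega(D)}(\log D)^{2}(1.33)^{r}$, not the rank-only bound claimed. To remove the $D$-dependence entirely one needs Roth's theorem (or an equivalent Diophantine-approximation input), not Baker: the paper shows that if $P=4\tilde Q+R$ is a large integral point then $x(\tilde Q)$ is a $K$-approximation to $x(S)$ (where $4S=R$) with exponent close to $8$, and then Roth over $K$ caps the number of such $\tilde Q$ by an absolute constant ($\leq 30$), independent of $D$ and $r$.

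Two smaller points. First, the paper does not run a gap principle inside caps for the small points at all: it simply observes that all $P\in\mathcal{Z}_D(R)$ with $x(P)\leq D^{2(1+\epsilon)}$ lie in a ball of radius $\sqrt{2(1+\epsilon)\log D}$ in $\Lambda$, pairwise separated by roughly $\sqrt{\log D}$, and applies a single Kabatiansky--Levenshtein spherical-code bound to get $(1.89)^{r+19r^{1/3}}$; the numerical optimisation is in the choice of the cutoff $\epsilon$ and the constant in $19r^{1/3}$, not in a $\theta_0$-versus-$\kappa(\theta_0)$ trade-off. Second, your Step~1 descent/Pell reduction is correct and is indeed used in the paper, but in the \emph{application} (Theorem~\ref{thm:simPell}), not as the engine of the count; the additive constant $30$ in the statement comes entirely from the Roth bound on large points, not from torsion or base cases.
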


Since $\#\E_D(\Q)/2\E_D(\Q)= 2^{2+\rank \E_D(\Q)}$,
Theorem~\ref{thm:posint} is immediate from Theorem~\ref{case:positive1}. 

Fix $\epsilon>0$. We partition $\mathcal{Z}_D(R)$ into the points with ``small'' $x$-coordinates,
\[
\mathcal{S}_{D}(R)\coloneqq \left\{P\in \mathcal{Z}_D(R):x(P)\leq D^{2(1+\epsilon)}\right\}
\]
and the points with ``large'' $x$-coordinates,
\[
\mathcal{L}_{D}(R)\coloneqq \left\{P\in \mathcal{Z}_D(R):x(P)> D^{2(1+\epsilon)}\right\},
\]
which we will bound by very different techniques.

\begin{theorem}[Points with large $x$-coordinates] \label{case:positive2} 
There exists some $\epsilon>0$ such that the following holds for any sufficiently large $D$ and $R\in \E_D(\Q)$. 
\begin{enumerate}
\item $\#\mathcal{L}_{D}(R)\leq 30$;
\item If the $abc$ conjecture holds, then $\mathcal{L}_D(R)=\varnothing$.
\end{enumerate}
\end{theorem}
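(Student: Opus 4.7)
My plan is to reduce both parts of the theorem, via the classical $2$-descent on $\E_D$, to a question about integer solutions of a simultaneous Pell system. Concretely, for $(x,y) \in \mathcal{Z}_D(R)$ with $x > 0$ one has
\[x = d_1 u^2, \quad x - D = d_2 v^2, \quad x + D = d_3 w^2, \qquad (u,v,w) \in \Z^3,\]
where $d_1, d_2, d_3$ are squarefree divisors of $2D$ with $d_1 d_2 d_3$ a square, determined by the class of $R$ modulo $2\E_D(\Q)$. Subtracting yields
\[d_1 u^2 - d_2 v^2 = D, \qquad d_3 w^2 - d_1 u^2 = D,\]
and the hypothesis $P \in \mathcal{L}_D(R)$ forces $u \gg D^{1/2+\epsilon}$, placing the solution in the tail of each Pell orbit.

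For part (2), I would apply the $abc$ conjecture to $d_2 v^2 + 2D = d_3 w^2$. Because $y^2 = x(x-D)(x+D)$, every prime $p \nmid 2D$ dividing $x \pm D$ does so to even order, giving $\mathrm{rad}(d_2 v^2) \leq 2Dv$ and $\mathrm{rad}(d_3 w^2) \leq 2Dw$, and hence
\[\mathrm{rad}\bigl((d_2 v^2)(2D)(d_3 w^2)\bigr) \ll D\cdot vw \ll Dx.\]
The $abc$ conjecture then gives, for any fixed $\delta > 0$,
\[x \asymp d_3 w^2 \ll_\delta (Dx)^{1+\delta}, \quad \text{so} \quad x \ll_\delta D^{(1+\delta)/(1-\delta)}.\]
Choosing $\delta$ small relative to $\epsilon$ contradicts $x > D^{2(1+\epsilon)}$ for $D$ large, proving $\mathcal{L}_D(R) = \varnothing$.

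For part (1), the task is an unconditional absolute bound of $30$ on the number of integer solutions of the simultaneous Pell system with $u$ large, uniformly in $D$ and in $(d_1,d_2,d_3)$. Each individual Pell equation $d_i U^2 - d_j V^2 = \pm D$ or $\pm 2D$ has its positive integer solutions organised into a bounded number of orbits under the fundamental unit of the real quadratic field $\Q(\sqrt{d_i d_j})$, and within each orbit the relevant variable grows geometrically (a Mumford-type gap principle). A simultaneous solution of two such Pell equations then links two such multiplicative systems, and I would use a Bennett-style argument (Thue's method with Pad\'e or hypergeometric approximants) to bound the number of simultaneous solutions with $u$ in the tail of each orbit by a small absolute constant; summing over the bounded number of orbits would give $\leq 30$.

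The principal obstacle is the explicit constant $30$: Thue--Siegel--Roth gives finiteness but no numerics, while Baker's theorem on linear forms in logarithms gives effective but astronomical bounds. To obtain $30$ one needs a uniform, quantitative gap principle for the simultaneous Pell system, and the role of the large-$x$ hypothesis is precisely to place every solution in the hypergeometric regime and eliminate the small, sporadic solutions that cannot be bounded by an absolute constant.
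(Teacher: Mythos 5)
Your $2$-descent reduction to a simultaneous Pell system is the right starting point and is essentially how the paper frames things (cf.\ Theorem~\ref{thm:simPell}), but both of your two main steps have real problems.

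For part~(2), your $abc$ application is vacuous. You apply $abc$ to $d_2 v^2 + 2D = d_3 w^2$ with radical bound $\mathrm{rad}\bigl((d_2v^2)(2D)(d_3w^2)\bigr)\ll D\,vw$. Since $v\asymp\sqrt{x/d_2}$ and $w\asymp\sqrt{x/d_3}$, you have $vw\asymp x/\sqrt{d_2d_3}$, and nothing forces $d_2d_3$ to be large (for instance it could be $1$ or $2$, depending on the coset). Thus the radical is $\ll Dx$, while the term you want to bound is $d_3w^2=x+D\asymp x$. The $abc$ inequality then reads $x\ll_\delta (Dx)^{1+\delta}$, which rearranges to $1\ll D^{1+\delta}x^\delta$ and carries no information whatsoever: the radical exceeds the quantity you are bounding, by a factor of roughly $D$. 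The paper avoids this by applying $abc$ to $(D/g)^2 + y^2/(xg^2) = (x/g)^2$ (where $g=\gcd(x,D)$), whose largest term is $\asymp (x/g)^2$ while the radical is $\ll Dy/g^2 \ll Dx^{3/2}/g^2$; the ratio is $\gg x^{1/2}/D > D^{\epsilon/2}$ in the range $x>D^{2(1+\epsilon)}$, so $abc$ actually bites. You would need to restructure your equation so that the dominant term grows strictly faster than the radical.

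For part~(1) you offer only a sketch. You say a Bennett-style hypergeometric/Pad\'e argument ``would'' give a small absolute constant per orbit and ``summing over the bounded number of orbits would give $\leq 30$,'' but you explicitly flag in your last paragraph that you do not know how to obtain the numeric constant. This is not a proof. More to the point, the paper takes a genuinely different route and never touches the hypergeometric method: it first establishes a gap principle for ``medium'' points $P=2Q+R$ via the height inequality $\log x(P)\geq 3\log x(R)-4\log D+O(1)$, bounding the number of such points by $28$; it then shows that each remaining ``large'' point $P=4\tilde Q+R$ yields a $K$-approximation $x(\tilde Q)$ to $x(S)$ (where $4S=R$, $K$ the $2$-division field) with exponent close to $8$, and invokes an explicit quantitative form of Roth's theorem over $K$ (together with a strong gap principle for approximations) to bound these by $3$ per choice of $S$, deduplicating over $4$-torsion to get at most $3$ in total. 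Combining gives $28+\text{(pair)}\cdot 1 = 30$. If you want to pursue your Thue/Pad\'e route you would need to actually construct the approximants uniformly in $D$ and in the coefficients $d_i$ and carry out the gap counting; as written, the argument does not reach the stated bound.
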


We will complete the proof of Theorem~\ref{case:positive1} by showing that $\#\mathcal{S}_{D}(R)<(1.89)^{r+19r^{1/3}}$.

If $x(R)\leq D$ then we can improve the bound to $\#\mathcal{Z}_D(R)\leq 4$.
\begin{theorem}[Cosets with respect to points with very small $x$-coordinates]
\label{thm:elliptic}\leavevmode
\begin{enumerate}
\item \label{case:infty} 
$\mathcal{Z}_D(\mathcal O)=\varnothing$;
\item \label{case:ntors} 
$\mathcal{Z}_D((-D,0))=\{(-D,0)\}$ and $\mathcal{Z}_D((0,0))=\{(0,0)\}$;
\item \label{case:ptors} 
$\mathcal{Z}_D((D,0))$ contains $(D,0)$ and no more than one other pair $P,-P\in \E_D(\Z)$, given by $x(P)=(2v^2-1)D$, where $v+u\sqrt{D}$ is the fundamental solution of the equation $v^2-Du^2 = 1$;
\item \label{case:negative} 
If $R\in \E_D(\Q)$ and $-D<x(R)<0$, then $\mathcal{Z}_D(R)$ contains at most one pair $P,-P\in \E_D(\Z)$, except for the sets
\[\begin{split}
\{(-98, \pm 12376),\ (-1058, \pm 21896)\} &\text{ when } D=1254, \\
\text{and } \{(-5184,\pm 398664),\ (-7056,\pm 233772)\} &\text{ when } D=7585.
\end{split}\]
\end{enumerate}
\end{theorem}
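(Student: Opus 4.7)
The plan is to prove all four parts through the same 2-descent framework. Define the Kummer map $\kappa:\E_D(\Q)\to (\Q^\times/(\Q^\times)^2)^3$ by $(x,y)\mapsto (x+D,\, x,\, x-D)$ modulo squares on non-2-torsion points, with the standard replacement of vanishing coordinates at the four 2-torsion points; two points lie in the same $2\E_D(\Q)$-coset precisely when their Kummer images agree. For the 2-torsion one computes
\[
\kappa(\mathcal{O})=(1,1,1),\quad \kappa((0,0))=(D,-1,-D),\quad \kappa((-D,0))=(2,-D,-2D),\quad \kappa((D,0))=(2D,D,2).
\]
Thus $P=(x,y)\in \mathcal{Z}_D(R)$ translates into three explicit squareness conditions on integer expressions in $x$.

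For Part~(1), this requires $x$, $x+D$, and $x-D$ all to be integer squares---three integer squares in arithmetic progression with common difference $D$. A parity analysis modulo~$8$ rules this out whenever $D$ is squarefree and positive: for $D$ odd the three squares are forced into contradictory residue classes, while for $D$ even squarefree one derives $4\mid D$, a contradiction. For Part~(2), applied to $R=(0,0)$, requiring $-x$, $D(D+x)$, and $D(D-x)$ all to be squares forces (via squarefreeness of $D$) the equations $-x=a^2$ and $a^2=D(1-w^2)$ for some integer $w$ with $w^2\le 1$; hence $a=0$ and $P=(0,0)$. The case $R=(-D,0)$ is analogous.

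For Part~(3), matching $\kappa((D,0))=(2D,D,2)$ forces $x=Db^2$ with $2(b^2+1)$ and $2D(b^2-1)$ both squares; equivalently $b^2+1=2c'^2$ and $b^2-1=2Df^2$ for integers $c',f$. Subtraction yields the Pell equation $c'^2-Df^2=1$, so $x(P)=(2c'^2-1)D$ for some Pell solution $(c',f)$. To conclude that $c'$ is the fundamental Pell solution $v$---and hence that at most one extra pair $\pm P$ exists---we observe that $b^2=2c'^2-1$ is itself a Pell-type equation. A common term of these two linear recurrence sequences would produce a nontrivial algebraic relation among the fundamental units of $\Z[\sqrt{D}]$ and $\Z[\sqrt{2}]$; a descent argument then forces any such common term to be the smallest, corresponding to the fundamental solution.

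For Part~(4), with $R=(x_R,y_R)$ and $-D<x_R<0$, the matching conditions that $(x_R+D)(x+D)$, $x_R x$, and $(x_R-D)(x-D)$ all be squares translate---after extracting the squarefree parts of these factors---into a system of two Pell-type equations in auxiliary integer variables; the linear-recurrence intersection argument from Part~(3) yields at most one pair $\pm P$ for generic $R$. The exceptional values $D\in\{1254,7585\}$ correspond to rare simultaneous Pell solutions and are identified by a finite computational search, made effective by an a priori height bound on any putative solution. The principal obstacle throughout is precisely this uniqueness assertion in Parts~(3) and~(4): bounding common solutions of two Pell-type equations requires either a tailored descent using the norm forms in $\Z[\sqrt{2}]$ and $\Z[\sqrt{D}]$, or an appeal to effective bounds on linear forms in logarithms.
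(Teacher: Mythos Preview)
Your setup via the Kummer map is correct, and Parts~(1) and~(2) go through essentially as you outline; indeed your mod-$8$ argument for three squares in arithmetic progression with squarefree common difference is a clean alternative to the paper's route for~(1), which instead analyses the duplication formula $x(2P)=\bigl((x^2+D^2)/2y\bigr)^2$ directly.

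The real gaps are in Parts~(3) and~(4). In Part~(3) you correctly arrive at the simultaneous system $b^2+1=2c'^2$, $b^2-1=2Df^2$, but the step ``a descent argument then forces any such common term to be the smallest'' is not a proof. Multiplying the two equations gives $b^4-D(2c'f)^2=1$, and the assertion that this has at most one solution in positive integers is precisely Cohn's theorem on $X^4-DY^2=1$ (building on Ljunggren), which is what the paper invokes. This is a genuinely nontrivial result, not a routine descent in $\Z[\sqrt{2}]$ and $\Z[\sqrt{D}]$; you need either to cite it or to reproduce its proof.

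For Part~(4) your proposed route diverges sharply from the paper and is not developed enough to stand. The paper does \emph{not} reduce to simultaneous Pell equations at all. Instead it uses the height estimate (established earlier) that any $P$ with $-D<x(P)<0$ satisfies $\h(P)\leq \log D+\tfrac{2}{3}\log 2$, while any two distinct points in the same $2\E_D(\Q)$-coset differ by a double point and hence are at canonical-height distance at least $\sqrt{\log D-2\log 2}$. Projecting to the unit sphere, the pairwise angle $\theta$ satisfies $\sin(\theta/2)\to\tfrac{1}{2}$, and Rankin's bound $A(r,\theta)\leq 2\sin^2\theta/(2\sin^2\theta-1)$ gives at most $3$ points once $D\geq 97353$; parity then cuts this to one pair $\pm P$, and a finite computer search handles $D<97353$. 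Your sketch lacks both the height upper bound that makes the problem finite and any concrete mechanism for bounding intersections of the resulting recurrences uniformly over all cosets $R$; the phrase ``generic $R$'' is doing work you have not justified.
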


The sets considered in Theorem~\ref{thm:elliptic}~\eqref{case:infty},~\eqref{case:ntors} contains no non-trivial integral points, and the upper bounds obtained in~\eqref{case:ptors},~\eqref{case:negative} are sharp. Indeed, on the curve $\E_6(\Q)$ of rank $1$, the distinct cosets $\mathcal{Z}_6(R)$ of integral points are $\{(-6,0)\}$, $\{(0,0)\}$, as well as
\[
 \{(-3,\pm 9)\},\ \{(-2,\pm 8)\},\ \{(6,0),\ (294,\pm 5040)\},\ \{(12,\pm 36)\},\ \{(18,\pm 72)\}.
\]

Ordering the curves $\E_D$ with increasing $D$, Heath-Brown~\cite[Theorem~1]{HBSelmer} showed that the moments of the $2$-Selmer of $\E_D$ are bounded. Together with~\eqref{eq:SHS}, this implies that the average size of $\E_D(\Z)$ is bounded. The boundedness of the average of $\#\E_D(\Z)$ was also proved by Alpoge~\cite{Alpoge}, but the upper bound was not explicitly evaluated.

Let $\D_N$ be the set of positive squarefree integers less than $N$. Define $\T_D$ to be the set of torsion points on $\E_D(\Q)$. It is standard that $\T_D=\{\OO,\ (0,0),\ (\pm D,0)\}$ (see for example \cite[Chapter~I, Proposition~17]{Koblitz}). 
Let $s_{2^{\infty}}(D)$ denote the $\Z_2$-corank of the $2$-power Selmer group of $\E_D(\Q)$, and $s_{2^k}(D)$ denote the $\F_2$-rank of the $2^k$-Selmer rank of $\E_D(\Q)$. Then $s_{2^{\infty}}(D)=\lim_{k\rightarrow\infty}s_{2^k}(D)$. Each $s_{2^k}(D)$ and hence also $s_{2^{\infty}}(D)$ provides an upper bound on the rank of $\E_D(\Q)$.
Heath-Brown~\cite{HBSelmer} notes that it can be derived from results of Cassels~\cite{Cassels4} and Birch and Stephens~\cite{BirchStephens}, that $s_2(D)$ is even for $D\equiv 1$, $2$ or $3\bmod 8$, and odd for $D\equiv 5$, $6$ or $7\bmod 8$. 
An elementary proof of this parity condition was given by Monsky~\cite[Appendix]{HBSelmer}. 
Furthermore, the $2^{k+1}$-Selmer group is computed from the kernel of the Cassels-Tate pairing on the $2^k$-Selmer group. Since the Cassels-Tate pairing is always skew-symmetric~\cite{Cassels3}, we have $s_{2^{k}}(D)\equiv s_{2^{k+1}}(D)\bmod 2$ for all $k$, so $s_{2^{\infty}}(D)$ and $s_2(D)$ are of the same parity. 
Smith~\cite[Corollary~1.2]{Smith2} recently claimed that
\[
\{D\in \D_N :s_{2^{\infty}}(D)\geq 2\}=o(N).
\]
It then follows that for $s\in\{0,1\}$, we have
\begin{equation}\label{eq:minconj}
\lim_{N\rightarrow\infty}\frac{1}{\#\D_N}\#\{D\in \D_N :s_{2^{\infty}}(D)=s\}=\frac{1}{2}.
\end{equation}
Since $\rank \E_D(\Q)\leq s_{2^{\infty}}(D)$, asymptotically at most half of the curves are of rank $1$, and density $0$ of curves are of rank $2$ or above.
This allows us to focus on curves with rank $0$ and $1$, hence we can find a better upper bound on the average.

\begin{theorem}\label{thm:avgint}
We have
\[\limsup_{N\rightarrow\infty}\frac{1}{\#\D_N}\sum_{D\in \D_N}\#(\E_D(\Z)\setminus\T_D)\leq 2.\]
If we further assume the $abc$ conjecture, 
the upper bound can be improved to $1$.
\end{theorem}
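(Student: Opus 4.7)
\emph{Proof plan.} The plan is to split the sum by the rank $r(D):=\rank\E_D(\Q)$. Rank-$0$ curves contribute nothing to $\#(\E_D(\Z)\setminus\T_D)$ since $\E_D(\Q)=\T_D$. For $r(D)\geq 2$, since $r(D)\leq s_{2^{\infty}}(D)$ and $r(D)\leq s_2(D)$, I would combine Theorem~\ref{thm:posint}, Heath-Brown's bounded Selmer moments $\sum_{D\in\D_N}C^{s_2(D)}\ll_C N$, Cauchy--Schwarz, and hypothesis~\eqref{eq:Smith} to obtain
\[
\sum_{\substack{D\in\D_N\\ r(D)\geq 2}}\#\E_D(\Z)\ll\bigl(\#\{D\in\D_N:s_{2^{\infty}}(D)\geq 2\}\bigr)^{1/2}\Bigl(\sum_{D\in\D_N}(3.8)^{2s_2(D)}\Bigr)^{1/2}=o(N).
\]

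The main term is rank $1$, of density at most $\tfrac12+o(1)$ by~\eqref{eq:minconj}. For such $D$, write $\E_D(\Q)=\T_D\oplus\langle P_0\rangle$, so $2\E_D(\Q)=2\Z P_0$ has $8$ cosets indexed by $(T,\epsilon)\in\T_D\times\{0,1\}$. Theorem~\ref{thm:elliptic} bounds the four $\epsilon=0$ cosets by at most two non-torsion integer points (a possible Pell pair in $\mathcal{Z}_D((D,0))$). For the four $\epsilon=1$ cosets, I would combine Theorem~\ref{case:positive2}(1) (bounding $\#\mathcal{L}_D(R)\leq 30$) with Theorem~\ref{thm:elliptic}\eqref{case:negative} (bounding $\#\mathcal{S}_D(R)\leq 2$ whenever a representative $R$ with $-D<x(R)<0$ exists), and then use the component structure of $\E_D(\R)$ (bounded ``egg'' $[-D,0]$ versus unbounded $[D,\infty)$) together with the $2$-torsion translation action to show that such a representative is available and that these four cosets together contribute at most two non-torsion integer points, except for the finitely many exceptional $D$ listed in Theorem~\ref{thm:elliptic}\eqref{case:negative} whose total contribution is $O(1)$. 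This yields $\#(\E_D(\Z)\setminus\T_D)\leq 4$ for rank-$1$ $D$ outside a finite set, giving $\limsup\leq 4\cdot\tfrac12+o(1)=2$.

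Under the $abc$ conjecture, Theorem~\ref{case:positive2}(2) renders every $\mathcal{L}_D(R)$ empty. The Pell pair, with $x$-coordinate $(2v^2-1)D$, lies in $\mathcal{L}_D$ whenever $v>D^{(1+2\epsilon)/2}$, which holds for density-one $D$ (fundamental Pell solutions of $\Q(\sqrt D)$ are typically of size $e^{\Omega(\sqrt{D})}$); similarly the large-$x$ integer points in the $\epsilon=1$ cosets disappear. The refined coset analysis gives $\#(\E_D(\Z)\setminus\T_D)\leq 2$ for density-one rank-$1$ $D$, so $\limsup\leq 1$.

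The main obstacle will be the $\epsilon=1$ coset analysis at $r=1$: Theorem~\ref{case:positive1}'s per-coset bound $30+(1.89)^{r+19r^{1/3}}$ is ineffective there, so the argument must instead exploit the real-component geometry of $\E_D$, the translation action of $\T_D$ on cosets, and the finite exceptional list in Theorem~\ref{thm:elliptic}\eqref{case:negative} to establish directly that the total non-torsion integer contribution from the four $\epsilon=1$ cosets is at most $2$.
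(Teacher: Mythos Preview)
Your treatment of the rank $\geq 2$ contribution via Cauchy--Schwarz and Heath-Brown's Selmer moments is exactly right and matches the paper. The gap is entirely in the rank-$1$ analysis of the four $\epsilon=1$ cosets.

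The mechanism you propose --- real-component geometry together with Theorem~\ref{thm:elliptic}\eqref{case:negative} --- cannot yield the claimed bound of $2$ for these cosets. Each coset of $2\E_D(\Q)$ lies entirely on one real component (this is determined by $\theta$), and among the four cosets $P_0+T$, $T\in\T_D$, exactly two lie on the egg $[-D,0]$ and two on the unbounded branch $[D,\infty)$. Theorem~\ref{thm:elliptic}\eqref{case:negative} applies only to the egg cosets, giving at most $2$ points each, hence $\leq 4$ from the egg. For the two unbounded $\epsilon=1$ cosets you have nothing: Theorem~\ref{case:positive1} is useless at $r=1$, and Theorem~\ref{case:positive2} only controls $\mathcal L_D$, leaving $\mathcal S_D$ (points with $D<x\leq D^{2(1+\epsilon)}$) completely unbounded. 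The $2$-torsion translation action permutes the four cosets but does not send unbounded cosets to egg cosets, so it buys nothing here. Concretely, already for $D=6$ the four $\epsilon=1$ cosets carry eight non-torsion integer points, and nothing in your toolkit rules out similar behaviour for infinitely many $D$.

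The paper closes this gap with two ingredients you are missing. First, a density result of Le~Boudec shows that the set of $D\in\D_N$ for which $\E_D$ has \emph{any} non-torsion integral point with $H(P)<D^2/(\log D)^\kappa$ has size $o(N)$, so one may assume every integral point is large. Second, under this assumption a division-polynomial estimate (Lemma~\ref{lemma:divp} and Theorem~\ref{thm:smallmult}) shows that if the generator $P_0$ is integral with $x(P_0)>D^2/(\log D)^\kappa$, then $mP_0\notin\E_D(\Z)$ for all $1<m\leq\exp(C\sqrt{\log D})$; since all $\epsilon=1$ integral points are odd multiples of a single generator (Lemma~\ref{lem:multint} plus the observation $m(P_0+T)=mP_0+T$ for odd $m$), this kills the medium range $\A_1$ entirely. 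The Roth machinery from Section~\ref{sec:bound} then bounds the very large multiples by $\#\A_2\leq 2$ after pairing $\pm P$. Together with $\pm P_0$ this gives $\leq 4$, and the Pell pair is shown separately to occur for only $\ll\sqrt N\log N$ values of $D$ via the class number formula, so it does not affect the average. Under $abc$, $\A_2=\varnothing$ and only $\pm P_0$ survive, giving $\leq 2$.
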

Note that non-torsion integral points come in pairs of $(x,\pm y)$.
The upper bound from Theorem~\ref{thm:avgint} comes from the possible existence of a pair of small points in the range $D^2/(\log D)^{12+\epsilon}<x<D^{2+\epsilon}$, and a pair of large points of size $x>\exp(\exp(\frac{23}{12}\sqrt{\log D}))$ left from an application of Roth's Theorem, which we are unable to eliminate on most curves of rank $1$.

We expect the order of $\sum_{D\in \D_N}\#(\E_D(\Z)\setminus\T_D)$ to be roughly $N^{1/2}$.
To obtain a lower bound, we attempt by counting a subset of integral points.
Suppose $u>v$ are squarefree positive coprime integers. 
Let $w$ be the squarefree part of $u^2-v^2$, so $u,v,w$ are pairwise coprime.
If $D=uvw$, then
$(u^2 w,u^2w^{3/2}\sqrt{u^2-v^2} )\in\E_D(\Z)$, since $w(u^2-v^2)$ is a square by the definition of $w$.
If $uv(u^2-v^2)<N$, then $D\in\D_N$, so counting the number of squarefree coprime positive integers $u,v$ in the range $v<u<N^{1/4}$, gives a lower bound of $\gg N^{1/2}$.

Now we give a heuristic on the maximum size of $\sum_{D\in \D_N}\#(\E_D(\Z)\setminus\T_D)$. The larger points $(x,y)\in\E_D(\Z)$ with $x>D^{2+\epsilon}$ can be removed by assuming the $abc$ conjecture as in Theorem~\ref{case:positive2}, so let's look at 
$D\in \D_N$ and $|x|<D^{2+\epsilon}$.
If $x=-j$, $j-D$, or $D+j$ for $1\leq j\leq D/2$ then $x^3-D^2x\approx jD^2$.
If $\frac{3}{2}D<x<N^3$ then $x^3-D^2x\approx x^3$.
Then we expect the number of pairs $(D,x)$ such that $x^3-D^2x$ is a square to be approximately
\[\sum_{\frac{1}{2}N\leq D<N} 
\left(\sum_{1\leq j\leq D/2} \left(\frac{1}{jD^2}\right)^{1/2} 
+ \sum_{\frac{3}{2}D<x<D^3} \left(\frac{1}{x^3}\right)^{1/2}\right)
\ll N^{1/2}.
\]

To prove Theorem~\ref{case:positive1}, we bound $\#\mathcal{S}_{D}(R)$ and $\#\mathcal{L}_D(R)$ separately.
We prove that $\#\mathcal{S}_{D}(R)$ is bounded above by
\[
\#\left\{P\in R+2\E_D(\Q):\h(P)\leq 2(1+\epsilon)\log D+O(1)\right\},\]
where $\h$ denotes the canonical height.
Then viewing $\E_D(\Q)$ as an $r$-dimensional Euclidean space, we apply sphere packing bounds to get an upper bound of $(1.89)^{r+19r^{1/3}}$ after fixing some appropriate $\epsilon$.

On the other hand, we show that $\#\mathcal{L}_D(R)$ is bounded by some constant depending only on $\epsilon$. 
Assume $x(R)>D$ and $R\notin\T_D+2\E_D(\Q)$, otherwise the result follows from Theorem~\ref{thm:elliptic}.
We first prove that points in $\mathcal{L}_{D}(R)$ obey a gap principle. Then, for points with larger heights in $\mathcal{L}_{D}(R)$, we apply Roth's theorem in a way that is similar to a classical argument of Siegel's Theorem, which also appeared in Alpoge's work~\cite{Alpoge}. Suppose $P=2Q+R\in \mathcal{Z}_D(R)$ and $x(P)$ is large, $P$ is close to the point at infinity. Let $K$ be the minimal number field containing the $x$-coordinates of all points in $\frac{1}{2}\E_D(\Q)$. If $4S=R$ and $2\tilde{Q}=Q$, where $\tilde{Q}, S\in \E_D(\Qbar)$, then $S$ and $\tilde{Q}$ are close together. Making this precise, we can show that $x(\tilde{Q})$ gives a $K$-approximation to $x(S)$ with exponent close to $8$. Roth's theorem show that there are finitely many such $\tilde{Q}$. In~\cite{Alpoge}, large integral points of the form $P=3Q+R$ were considered, where $Q,R$ are rational points on a general elliptic curve. The main difference of our approach is that we apply Roth's theorem over $K$ instead of $\Q$. Given a class in $\E_D(\Q)/n\E_D(\Q)$, the exponents of the $\Q$-approximations obtained from the argument in~\cite{Alpoge} are close to $\frac{1}{2}n^2$. If we had taken $n=2$, the exponent would be just under $2$ which would not be large enough to apply Roth's theorem. Applying the argument over $K$ instead gives a large enough exponent.

\section{Applications to other Diophantine equations}
Given positive integers $a$, $b$ and $c$, Bennett~\cite[Theorem~1.2]{Bcons} proved that there exists at most one set of three consecutive integers of the form $cZ^2$, $bY^2$, $aX^2$. In other words, the simultaneous equations
\[
aX^2-bY^2=1,\qquad bY^2-cZ^2=1, \qquad (X,Y,Z)\in\Z_{> 0}^3
\]
possess at most one solution. We can ask a more general question replacing the $1$ in the equations with an integer $d$.

\begin{theorem}\label{thm:simPell}
Let $a,b,c,d$ be pairwise coprime positive integers and set $D=abcd$. 
Then for any sufficiently large $D$, the system of equations \begin{equation}\label{eq:simPell}
aX^2-bY^2=d,\qquad bY^2-cZ^2=d
\end{equation}
has at most 
$15+(1.89)^{r+19r^{1/3}}\leq 
15+(3.58)^{\omega(D)+12\omega(D)^{1/3}}$ solutions $(X,Y,Z)\in\Z_{> 0}^3$, where $r\coloneqq \rank \E_D(\Q)$.
\end{theorem}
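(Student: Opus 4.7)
The plan is to embed the set of positive integer solutions of \eqref{eq:simPell} into a single coset $\mathcal{Z}_D(R)$ and then invoke Theorem~\ref{case:positive1}.

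Given a solution $(X,Y,Z)\in\Z_{>0}^3$, put $u:=bY^2$, so that $aX^2=u+d$ and $cZ^2=u-d$. Multiplying these together with $u$ gives $u(u-d)(u+d)=abc(XYZ)^2$. Setting $x:=abc\cdot u=ab^2cY^2$ and $y:=(abc)^2 XYZ$, after multiplying the identity by $(abc)^2$ we obtain
\[x^3-D^2 x=(abc)^3(u^3-d^2u)=(abc)^4(XYZ)^2=y^2,\]
so $P_{X,Y,Z}:=(x,y)\in\E_D(\Z)$ with $y(P_{X,Y,Z})>0$. Distinct solutions produce distinct points: $Y$ is recovered from $x=ab^2cY^2$, after which $X$ and $Z$ are determined by the Pell equations and positivity.

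Next I show that every $P_{X,Y,Z}$ lies in the same coset of $2\E_D(\Q)$ via the complete $2$-descent. Since $\E_D$ has rational full $2$-torsion $\{\OO,(0,0),(\pm D,0)\}$, the map
\[\E_D(\Q)/2\E_D(\Q)\ \hookrightarrow\ (\Q^*/\Q^{*2})^3,\qquad P\mapsto\bigl(x,\ x-D,\ x+D\bigr)\bmod\Q^{*2},\]
is injective on non-torsion classes. For $P_{X,Y,Z}$ we have
\[(x,\ x-D,\ x+D)\ =\ (ab^2cY^2,\ abc^2Z^2,\ a^2bcX^2),\]
and since $a,b,c,d$ are squarefree and pairwise coprime, the squarefree parts are $(ac,ab,bc)$, independently of the particular solution. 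Hence all $P_{X,Y,Z}$ share the same descent image and therefore lie in a common coset $R+2\E_D(\Q)$; that is, $P_{X,Y,Z}\in\mathcal{Z}_D(R)$.

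Finally, since $-R\equiv R\pmod{2\E_D(\Q)}$ holds automatically (as $-2R\in 2\E_D(\Q)$), the set $\mathcal{Z}_D(R)$ is closed under $P\mapsto -P$; together with $y(P_{X,Y,Z})>0$ this confines positive solutions to a set of size at most $\tfrac12\#\mathcal{Z}_D(R)$. For sufficiently large $D$, Theorem~\ref{case:positive1} then yields
\[\#\{\text{positive solutions}\}\ \leq\ \tfrac12\bigl(30+(1.89)^{r+19r^{1/3}}\bigr)\ \leq\ 15+(1.89)^{r+19r^{1/3}}.\]
The second, $\omega(D)$-form of the bound follows from the classical $2$-descent inequality $r=\rank\E_D(\Q)\leq\dim_{\F_2}\Sel_2(\E_D)-2\leq 2\omega(2D)-2\leq 2\omega(D)$ for squarefree $D$, combined with the numerical facts $(1.89)^2<3.58$ and $(1.89)^{19\cdot 2^{1/3}}<(3.58)^{12}$, which give $(1.89)^{r+19r^{1/3}}\leq(3.58)^{\omega(D)+12\omega(D)^{1/3}}$. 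The whole argument rests on Theorem~\ref{case:positive1}; the only new ingredient is the observation that solutions of \eqref{eq:simPell} realise integral points with a prescribed $2$-descent image.
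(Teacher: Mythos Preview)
Your argument is correct and follows exactly the approach of the paper: map each solution to the integral point $(ab^2cY^2,(abc)^2XYZ)\in\E_D(\Z)$, use the $2$-descent map $\theta$ to see that all such points have the fixed image $(ac,ab,bc)$ and hence lie in a single coset $\mathcal{Z}_D(R)$, then halve the bound from Theorem~\ref{case:positive1} since $\pm P$ yield the same solution. You additionally supply the derivation of the second inequality via $r\le 2\omega(D)$, which the paper leaves implicit.
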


A different upper bound on the number of solutions to the system~\eqref{eq:simPell} depending only on $b$ and $d$, but involving implicit constants, can be deduced from \cite[p.41]{CZ}.  

We prove Theorem~\ref{thm:simPell} by relating the problem to our result in Theorem~\ref{case:positive1}. 
If we take $D=abcd$ and $x=ac(bY)^2$, then $x-D=ab(cZ)^2$ and $x+D=bc(aX)^2$.
Therefore $(ac(bY)^2,(abc)^2XYZ)\in \E_D(\Z)$.
The image of such a point under the injective homomorphism
\[\theta:\E_D(\Q)/2\E_D(\Q)\rightarrow \Q/(\Q^*)^2\times \Q/(\Q^*)^2\times \Q/(\Q^*)^2\]
given at non-torsion points by
\[(x,y)\mapsto 
(x-D,\ x,\ x+D),
\]
is $(ab,ac,bc)$.
If $P$ and $R$ are both integral points on $\E_D$ that correspond to solutions to~\eqref{eq:simPell}, then $P-R\in 2\E_D(\Q)$.
Moreover, $x(P)>0$ and $b^2\mid x(P)$.
Theorem~\ref{thm:simPell} is a corollary of Theorem~\ref{case:positive1} as $\pm P$ corresponds to the same solution for~\eqref{eq:simPell}.

More general forms of simultaneous Pell equations have been studied previously. For nonzero integers $a_1, a_2, b_1, b_2,u,v$, let $\N(a_1,a_2,b_1,b_2,u,v)$ denote the number of solutions to the system of equations
\[
a_1X^2-b_1Y^2=u,\qquad b_2Y^2-a_2Z^2=v\]
in positive integers $X,Y,Z$ such that $\gcd(X,Y,Z,u,v)=1$.
Theorem~\ref{thm:simPell} provides an upper bound to $\N(a,c,b,b,d,-d)$, where $a,b,c,d$ are pairwise coprime positive integers. 
Transforming the equations~\eqref{eq:simPell} by $X\mapsto aX$ and $Z\mapsto cZ$, we get $\N(a,c,b,b,d,-d)=\N(1,1,ab,bc,ad,-cd)$.
Assuming $a,b$ are distinct positive integers and $-av\neq bu$,
Bennett~\cite[Theorem~2.1]{BPell} showed that 
\[\N(1,1,a,b,u,v)\ll 2^{\min\{\omega(u),\omega(v)\}}\log(|u|+|v|).\] This implies $\N(a,c,b,b,d,-d)\ll 2^{\omega(d)\min\{\omega(a),\omega(c)\}}\log((a+c)d)$. 
Bugeaud, Levesque and Waldschmidt~\cite[Th\'{e}or\`{e}me~2.2]{BLW} gave the bound
\[\N(a_1,a_2,b_1,b_2,u,v)\leq 2+2^{3996(\omega(a_1a_2uv)+1)}.\] Translating to our case, this gives an upper bound
$\N(a,c,b,b,d,-d)\leq 2+2^{3996(\omega(acd^2)+1)}$.

Theorem~\ref{thm:simPell} can also provide an upper bound to a different Diophantine equation.
In 1942, Ljunggren~\cite{Ljunggren} showed that for a fixed integer $d$, the equation
\[X^4-dY^2=1,\qquad (X,Y)\in\Z_{>0}^2,\] 
has at most two solutions, through a study of units in certain quadratic and biquadratic fields.
More recently, Bennett and Walsh~\cite{BW} used the
theory of linear forms in logarithms of algebraic numbers, to show that for squarefree positive integers $b,d\geq 2$, the equation 
\[b^2X^4-dY^2=1,\qquad (X,Y)\in\Z_{>0}^2,\]
has at most one solution.
We prove the following as a corollary to Theorem~\ref{thm:simPell}.
\begin{theorem}\label{thm:sim4}
Let $A,B,C$ be pairwise coprime positive squarefree integers.
Then there are $\ll 2^{\omega(AB^2C^2)}$
integral solutions $(X,Y)$ to 
\[A^2X^4-BY^2=C^2.\]
\end{theorem}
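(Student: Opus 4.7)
The plan is to deduce Theorem~\ref{thm:sim4} from Theorem~\ref{thm:simPell} by transforming the quartic equation into a family of simultaneous Pell systems indexed by coprime factorizations of $B$. The central algebraic identity is
\[
(AX^2 - C)(AX^2 + C) = B Y^2,
\]
obtained by rewriting $A^2 X^4 - B Y^2 = C^2$, and the idea is that each integer solution $(X,Y)$ canonically produces a coprime factorization $B = \alpha\beta$ together with a simultaneous Pell system satisfied by auxiliary positive integers.

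First I would study the common factor $g := \gcd(AX^2 - C,\, AX^2 + C)$, which divides $2C$. Using that $A,B,C$ are pairwise coprime and squarefree, any odd prime $p$ dividing $g$ must divide both $C$ and $AX^2$, so $p \mid X$; hence $g$ is a squarefree divisor of $2C$ supported on primes of $\gcd(X,C)$ together with possibly $2$, and only $O(1)$ values of $g$ can occur for each $(X,Y)$. After pulling out $g$, the coprime factorization of $BY^2/g^2$ (using $B$ squarefree) yields $\alpha,\beta,S,T \in \Z_{>0}$ with $\alpha\beta = B$, $\gcd(\alpha,\beta)=1$, and
\[
AX^2 - C = g\alpha S^2, \qquad AX^2 + C = g\beta T^2.
\]
The triple $(T,X,S)$ then satisfies the simultaneous Pell system
\[
(g\beta)T^2 - AX^2 = C, \qquad AX^2 - (g\alpha)S^2 = C,
\]
and in the main case $g=1$ the coefficients $(\beta, A, \alpha, C)$ are pairwise coprime positive integers with product $ABC$. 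This is exactly the form \eqref{eq:simPell} required to invoke Theorem~\ref{thm:simPell} with $D = ABC$, and the cases $g>1$ are handled by a parallel scaling argument (absorbing $g$ into the appropriate coefficient and noting $C/g$ is still coprime to the others). Summing over the $2^{\omega(B)}$ ordered coprime factorizations of $B$, the $O(1)$ permissible values of $g \mid 2C$, and the sign choices in $(X,Y)$ then yields the claimed bound of $\ll 2^{\omega(AB^2C^2)}$ integer solutions.

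The main obstacle will be the careful bookkeeping around the common factor $g$, especially when $g > 1$: one has to verify that, after scaling, the reduced Pell system still has pairwise coprime squarefree coefficients of the form required by Theorem~\ref{thm:simPell}, and that the map from solutions $(X,Y)$ to the parametrising data $(g, \alpha, \beta, S, T, X)$ is injective, so that no solution is over- or under-counted. A secondary technical point is that Theorem~\ref{thm:simPell} requires $D = ABC$ sufficiently large, so the finitely many small triples $(A,B,C)$ must be absorbed into the implicit constant in the $\ll$ notation, and one must also confirm that the $\omega(ABC)$-dependence of Theorem~\ref{thm:simPell}'s bound is compatible with the target exponent $\omega(AB^2C^2) = \omega(ABC)$ after summing.
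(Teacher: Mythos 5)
Your proposal follows essentially the same route as the paper: factor $A^2X^4-C^2=(AX^2-C)(AX^2+C)=BY^2$, extract a gcd $g$, split $B$ into an ordered coprime factorization $B=\alpha\beta$, read off a simultaneous Pell system with $D=ABC$, invoke Theorem~\ref{thm:simPell}, and sum over the parameters. The paper takes $g=\gcd(X,C)$ rather than $\gcd(AX^2-C,AX^2+C)$, which makes the scaling cleaner because then $g\mid Y$, $g\mid C$, and the factored equation divides exactly by $g^2$, yielding $Ag(X/g)^2\mp C/g$ with common factor $1$ or $2$; but as you observe the odd part of your $g$ coincides with $\gcd(X,C)$, so this is a cosmetic difference.

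One correctable slip: you assert that only $O(1)$ values of $g$ occur, which is not right. Each $(X,Y)$ does determine a unique $g$, but as $(X,Y)$ ranges over solutions $g$ ranges over squarefree divisors of $C$ (times possibly $2$), of which there can be $2^{\omega(C)+O(1)}$; the paper explicitly keeps this factor of $2^{\omega(C)}$ in its tally. Since the final exponent $\omega(AB^2C^2)$ in the theorem statement is being read informally (the paper's own tally is $2^{\omega(C)}\cdot 2^{\omega(B)}$ times the Theorem~\ref{thm:simPell} bound, which is $(3.58)^{\omega(ABC)+\cdots}$ rather than literally $2^{\omega(ABC)}$), this slip does not change the shape of the conclusion, but the $2^{\omega(C)}$ factor should appear in your count.
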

\begin{proof}
Let $g\coloneqq \gcd(X,C)$.
Observe that
\[\left(Ag\left(\frac{X}{g}\right)^2-\frac{C}{g}\right)\left(Ag\left(\frac{X}{g}\right)^2+\frac{C}{g}\right)=B\left(\frac{Y}{g}\right)^2.\]
The factors on the left hand side have common factor $1$ or $2$. Therefore we can write
\begin{equation}\label{eq:sim4}
Ag\left(\frac{X}{g}\right)^2-\frac{C}{g}
=B_1Y_1^2\text{ and }
Ag\left(\frac{X}{g}\right)^2+\frac{C}{g}=B_2Y_2^2,
\end{equation}
where $B_1$ and $B_2$ are positive integers such that $B_1B_2=B$ or $4B$, and $Y_1Y_2g=Y$.
Now applying Theorem~\ref{thm:simPell}, the system of equations~\eqref{eq:sim4} has $\ll 2^{\omega(ABC)}$ solutions.
There are $2^{\omega(C)}$ choices of $g\mid C$ and $\ll 2^{\omega(B)}$ choices of pairs $(B_1,B_2)$. This proves Theorem~\ref{thm:sim4}.
\end{proof}

\subsection{The $abc$ conjecture}
In Theorem~\ref{case:positive2}, if we are allowed to assume the $abc$ conjecture, we can show that there exists some $\epsilon$ (determined by the conjecture) such that the set $\mathcal{L}_D(R)$ is empty when $D$ is sufficiently large. 
The $abc$ conjecture states that for every $\epsilon>0$, for any pairwise coprime positive integers $a$, $b$, $c$, with $a+b=c$, we have 
\[c\ll_{\epsilon}\prod_{p\mid abc}p^{1+\epsilon}.\]
Suppose that $(x,y)\in \E_D(\Z)$ and $x>0$.
Let $g=\gcd(x,D)$. Dividing $y^2=x^3-D^2x$ by $xg^2$ and rearranging, we have
\[\left(\frac{D}{g}\right)^2+\frac{y^2}{xg^2}=\left(\frac{x}{g}\right)^2.\]
Assuming the $abc$ conjecture,
\begin{equation}\label{eq:abcE}
\left(\frac{x}{g}\right)^2\ll_{\epsilon}
\prod_{p\mid \left(\frac{D}{g}\right)^2\left(\frac{x}{g}\right)^2\frac{y^2}{xg^2}}p^{1+\epsilon}.
\end{equation}
If
$p\mid(\frac{D}{g})^2(\frac{x}{g})^2\frac{y^2}{xg^2}$,
then
$p\mid(\frac{D}{g})(\frac{x}{g})\frac{y^2}{xg^2}=\frac{Dy^2}{g^4}$.
By construction $g\mid D$ and $g^3\mid y^2$. Since $g$ is squarefree, so $g^2\mid y$.
Therefore
$p\mid \frac{Dy}{g^2}$.
Putting this back to~\eqref{eq:abcE},
\[\left(\frac{x}{g}\right)^2\ll_{\epsilon}\left(\frac{Dy}{g^2}\right)^{1+\epsilon}
<\left(\frac{Dx^{3/2}}{g^2}\right)^{1+\epsilon}
.\]
Then for $\epsilon<\frac{1}{15}$, since $g\leq D$,
\[x\ll_{\epsilon}
\left(\frac{D^{2(1+\epsilon)}}{g^{4\epsilon}}\right)^{1-3\epsilon}
\leq 
D^{2\left(\frac{1+\epsilon}{1-3\epsilon}\right)}
<D^{2(1+5\epsilon)}
.\]
This proves the last assertion in Theorem~\ref{case:positive2}.

\section{Height estimates}
Notice that if $(x,y)\in \E_D(\Q)$, then either $x\geq D$ or $-D\leq x\leq 0$.
For $\alpha\in\Qbar$, define height functions
$H(\alpha)\coloneqq \prod_{v}\max\{1,|\alpha|_v\}$ and $h(\alpha)=\log H(\alpha)=\sum_{v}\log^+|\alpha|_v$, where $v$ is taken over the set of places of $\Q(\alpha)$ and $\log^+$ is a function on the positive real numbers, defined as
$\log^+ t=\max\{0,\log t\}$.
For any point $P\in \E_D(\Qbar)$, define $H(P)=H(x(P))$, denote the (Weil) height by $h(P)\coloneqq h(x(P))$ and the canonical height by 
\[\h(P)\coloneqq \lim_{n\rightarrow \infty}\frac{h(nP)}{n^2}.\footnote{This is sometimes defined with an extra factor of $\frac{1}{2}$ in literature.}\]

\begin{lemma}\label{lemma:doublegcd}
Let $P\in \E_D(\Q)$ be a non-torsion point. Write $x(2P)=\frac{r}{s}$, where $r$ and $s$ are coprime integers and $s>0$. Then 
$\gcd(r,D)=1$.
\end{lemma}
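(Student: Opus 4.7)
The plan is to exploit that, since $P$ is non-torsion, so is $2P$, and $2P$ lies in $2\E_D(\Q)$, hence in the kernel of the injective $2$-descent map $\theta:\E_D(\Q)/2\E_D(\Q)\hookrightarrow (\Q^*/(\Q^*)^2)^3$ defined on non-torsion points by $(x,y)\mapsto(x-D,x,x+D)$, as recorded in the introduction. Thus each of $x(2P)$, $x(2P)-D$ and $x(2P)+D$ is a nonzero rational square. (This can also be read off directly from the duplication formula, which gives $x(2P)=((x^2+D^2)/(2y))^2$ and $x(2P)\pm D=((x^2\pm 2Dx-D^2)/(2y))^2$.)

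In particular $x(2P)>0$, so writing $x(2P)=r/s$ in lowest terms with $s>0$ one has $r>0$, and since $r/s$ is a rational square in lowest terms both numerator and denominator are perfect squares: $r=r_1^2$, $s=s_1^2$ for coprime positive integers $r_1,s_1$. Then $x(2P)-D=(r_1^2-Ds_1^2)/s_1^2$ is also a rational square with denominator $s_1^2$, which forces the integer identity $t^2=r_1^2-Ds_1^2$ for some $t\in\Z_{\ge 0}$.

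The conclusion then follows from a short $p$-adic valuation check. Fix a prime $p\mid D$ and suppose for contradiction that $p\mid r_1$. Coprimality $\gcd(r_1,s_1)=1$ forces $p\nmid s_1$, and $D$ squarefree gives $v_p(D)=1$, so $v_p(Ds_1^2)=1$ while $v_p(r_1^2)\ge 2$. Hence $v_p(t^2)=\min(v_p(r_1^2),v_p(Ds_1^2))=1$, contradicting that $v_p(t^2)$ must be even. Thus $\gcd(r_1,D)=1$ and so $\gcd(r,D)=\gcd(r_1^2,D)=1$. The one mildly delicate step I anticipate is extracting the \emph{integer} equation $t^2=r_1^2-Ds_1^2$ from the rational squareness of $x(2P)-D$ over the common denominator $s_1^2$; this reduces to the elementary observation that a positive rational square in lowest terms has both numerator and denominator perfect squares, after which the parity-of-valuation argument finishes the proof immediately.
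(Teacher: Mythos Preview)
Your argument is correct and follows essentially the same route as the paper's own proof: use that $2P$ is in the kernel of the $2$-descent map $\theta$ so that $x(2P)$ and $x(2P)-D$ are rational squares, reduce to an integer identity $r_1^2-t^2=Ds_1^2$ with $\gcd(r_1,s_1)=1$, and finish with a squarefreeness/parity-of-valuation contradiction. Two small points worth tightening: you should note that $(r_1^2-Ds_1^2)/s_1^2$ is already in lowest terms (any prime dividing $s_1$ and the numerator would divide $r_1$), and that $r_1^2-Ds_1^2>0$ because $x(2P)>D$ (since $2P$ is non-torsion and any rational point with positive $x$-coordinate has $x\ge D$); both are immediate but needed to invoke the ``rational square in lowest terms'' fact.
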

\begin{proof}
Suppose $P\in \E_D(\Q)$, then $\theta(2P)=(1,1,1)$, so write
\[x(2P)=\frac{r^2}{s^2},\qquad x(2P)-D=\frac{u^2}{v^2},\]
where $r,s,u,v\in\Z$ and $\gcd(r,s)=\gcd(u,v)=1$.
Combining gives
\[r^2v^2-u^2s^2=Dv^2s^2.\]
We see that $v=s$, since $\gcd(r,s)=\gcd(u,v)=1$. Rewriting 
\[r^2-u^2=Ds^2.\]
Since $\gcd(r,s)=\gcd(u,s)=1$ and $D$ is squarefree, $\gcd(r,D)=1$.
\end{proof}

We prove for points on $\E_D(\Q)$ that the Weil height and the canonical height are close together.
\begin{lemma}
Let $P=(x,y)\in \E_D(\Q)\setminus \{\OO,(0,0)\}$. Write $x=\frac{r}{s}$, where $r$ and $s$ are coprime integers and $s>0$.
If $x\geq D$, then
\begin{equation}\label{eq:height}
-\log |\gcd(r,D)|-2\log 2
\leq \h(P)-h(P)
\leq -\log |\gcd(r,D)|+\frac{2}{3}\log 2.
\end{equation}
If $-D\leq x<0$, then
\begin{equation}\label{eq:heightn}
\log \left|\frac{D}{\gcd(r,D)}\right|-\log^+|x|-2\log 2
\leq \h(P)-h(P)
\leq\log \left|\frac{D}{\gcd(r,D)}\right| -\log^+|x|+\frac{2}{3}\log 2.
\end{equation}
In particular, 
\begin{equation}\label{eq:height_sq}
-2\log 2
\leq 4\h(P)-h(2P)=\h(2P)-h(2P)
\leq \frac{2}{3}\log 2.
\end{equation}
\end{lemma}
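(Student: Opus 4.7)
The plan is to use the telescoping identity
\[\h(P) - h(P) = \sum_{n=0}^{\infty} \frac{h(2^{n+1}P) - 4 h(2^n P)}{4^{n+1}}\]
together with the duplication formula on $\E_D$. For non-torsion $Q \in \E_D(\Q)$, write $Q = (a/b^2, c/b^3)$ with $\gcd(a,b) = 1$ and $b > 0$ (the standard form; the denominator of $x(Q)$ is automatically a perfect square on this curve). The duplication formula yields
\[x(2Q) = \left(\frac{a^2 + D^2 b^4}{2bc}\right)^2,\]
and since $x(2Q) > D \geq 1$ we have $a^2 + D^2 b^4 > |2bc|$, so
\[h(2Q) = 2 \log(a^2 + D^2 b^4) - 2 \log \gcd(a^2 + D^2 b^4, 2bc).\]

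The key technical claim is that $\gcd(a^2 + D^2 b^4, 2bc) \in \{g^2, 2g^2\}$, where $g := \gcd(a, D)$. Since $\gcd(a,b) = 1$, primes of $b$ contribute nothing. For each odd prime $p \mid g$, squarefreeness of $D$ gives $v_p(D) = 1$, and a direct check shows $v_p(a^2 + D^2 b^4) = 2$; meanwhile the identity $c^2 = a(a-Db^2)(a+Db^2)$ forces $v_p(c) \geq 2$, so $v_p(\gcd) = 2$. At $p = 2$, a short case analysis on the parities of $a, b, D$ shows that the $2$-adic part of the gcd exceeds $2^{v_2(g^2)}$ by at most one extra factor of $2$.

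Combining this with $h(Q) = 2\log b + \log^+|x(Q)|$, when $x(Q) \geq D$ one obtains $h(2Q) - 4h(Q) = -4\log g + r$ with $|r| \leq 2\log 2$; when $-D \leq x(Q) < 0$, the analogous computation gives $h(2Q) - 4h(Q) = 4\log(D/g) - 4\log^+|x(Q)| + r'$ with $|r'| \leq 2\log 2$. Applied at $Q = P$ these supply the $n = 0$ term of the telescoping, which contributes the main terms of \eqref{eq:height} and \eqref{eq:heightn} after division by $4$. For each $n \geq 1$, the point $2^n P$ lies in $2\E_D(\Q)$, so Lemma~\ref{lemma:doublegcd} forces the associated $g$ to equal $1$; hence $|h(2^{n+1}P) - 4 h(2^n P)| \leq 2\log 2$, and the geometric tail $\sum_{n \geq 1} 4^{-n-1} = 1/12$ contributes at most $(\log 2)/6$ in absolute value. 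Summing these bounds yields \eqref{eq:height} and \eqref{eq:heightn}.

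Finally, \eqref{eq:height_sq} follows by applying \eqref{eq:height} to $2P$: one has $x(2P) \geq D$, and Lemma~\ref{lemma:doublegcd} gives $\gcd(r_1, D) = 1$ for the numerator $r_1$ of $x(2P)$ in lowest terms, so the $-\log|\gcd|$ term in \eqref{eq:height} vanishes; the identity $\h(2P) = 4\h(P)$ then rewrites the resulting inequality in the stated form. The $2$-torsion cases $P \in \{(\pm D, 0)\}$ are verified directly ($\h(P) = 0$, $h(P) = \log D$, $h(2P) = 0$). The main obstacle is the parity-based case analysis at $p = 2$ in the gcd computation: the odd-prime part is clean, but at $p = 2$ one must carefully track which configurations of $(a, b, D) \bmod 2$ produce the extra factor of $2$, and confirm that the $2$-adic contribution is always bounded by $2g^2$.
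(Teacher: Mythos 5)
Your proposal follows the paper's route essentially verbatim: the same telescoping decomposition of $\h(P)-h(P)$, the same duplication formula (you use the squared form $x(2Q)=\bigl(\tfrac{a^2+D^2b^4}{2bc}\bigr)^2$, the paper the equivalent $\tfrac{(r^2+D^2s^2)^2}{4rs(r-Ds)(r+Ds)}$), the same gcd claim, and Lemma~\ref{lemma:doublegcd} to kill the $\gcd$ in all higher terms of the tail. One substantive caution on the step you label ``a direct check'': for an odd prime $p\mid g$ with $v_p(a)=1$, the valuation $v_p(a^2+D^2b^4)$ is \emph{not} forced to be $2$ by congruence conditions on $(a,b,D)$ alone --- if $a^2\equiv -D^2b^4\pmod{p^3}$ it could be larger. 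You must invoke the curve relation $c^2=a(a-Db^2)(a+Db^2)$ here too: if $v_p(a^2+D^2b^4)\geq 3$ then $v_p(a^2-D^2b^4)=v_p\bigl((a^2+D^2b^4)-2D^2b^4\bigr)=2$, making $v_p(c^2)=1+2=3$ odd, which is impossible. With that in place, the $p=2$ case analysis you defer is exactly the finite parity check the paper carries out (on $r,D,s$ mod $2$ and $\ord_2 r$ versus $\ord_2 D$) and presents no further obstacle. Finally, note your symmetric per-term bound $|r_n|\leq 2\log 2$ summed over the geometric series actually produces the sharper two-sided window $\pm\tfrac{2}{3}\log 2$; the paper's asymmetric $[-2\log 2,\tfrac{2}{3}\log 2]$ comes from tracking the indicator contribution more coarsely and is what you want to match, but your tighter interval sits inside it and is also correct.
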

\begin{proof}
Focusing on the $h(2^nP)$ terms in the limit defining $\h(P)$, we can express the canonical height as a telescoping series
\begin{equation}\label{eq:tele}
\h(P)=h(P)-\sum_{n=0}^{\infty}\frac{1}{4^n}\left(h(2^nP)-\frac{1}{4}h(2^{n+1}P)\right).\end{equation}

Consider a point $P\in \E_D(\Q)\setminus \{\OO,(0,0)\}$. 
Write $x(P)=\frac{r}{s}$, where $r,s$ are coprime integers and $s>0$.
Then
\[x(2P)=\frac{(r^2+D^2s^2)^2}{4rs(r-Ds)(r+Ds)}.\]
If an odd prime $p$ divides both $(r^2+D^2s^2)^2$ and $4rs(r-Ds)(r+Ds)$, then since $\gcd(r,s)=1$, $p$ divides both $r$ and $D$.
If $r^2+D^2s^2$ is even,
either $r,D,s$ are all odd, or $r,D$ are even and $s$ is odd.
The first case implies that $r^2+D^2s^2\equiv 2\bmod 8$, so $4\parallel (r^2+D^2s^2)^2$.
The second case note that $D$ is squarefree so $2\parallel D$. If $2\parallel r$ we have $4\cdot 2^{4}\parallel (r^2+D^2s^2)^2$, otherwise the $2^{4}\parallel (r^2+D^2s^2)^2$.

Therefore
\[\gcd\left((r^2+D^2s^2)^2,4rs(r-Ds)(r+Ds)\right)=(\gcd(r,D))^4\text{ or }4(\gcd(r,D))^4.\]
Since $x(2P)>D$, we have
\begin{equation}\label{eq:doubleh}
\begin{split}
h(2P)
&=\log(r^2+D^2s^2)^2-\log\gcd\left((r^2+D^2s^2)^2,4rs(r-Ds)(r+Ds)\right)\\
&=2\log(r^2+D^2s^2)-4\log\gcd(r,D)-\One{s\text{ odd}}\One{\ord_2{r}=\ord_2{D}} 2\log 2.
\end{split}
\end{equation}

We first prove~\eqref{eq:height}. Suppose $x(P)\geq D$.
Then
\begin{equation}\label{eq:diff}
h(P)-\frac{1}{4}h(2P)
 =-\frac{1}{2}\log\left(1+\frac{D^2s^2}{r^2}\right)
+\log\gcd(r,D)+\One{s\text{ odd}}\One{\ord_2{r}=\ord_2{D}} 2\log 2.
\end{equation}
Apply~\eqref{eq:diff} to~\eqref{eq:tele}.
 Then 
\[0<\log\left(1+\frac{D^2s^2}{r^2}\right)<\log 2.\]
Writing $x(2^nP)=\frac{r(2^nP)}{s(2^nP)}$ in lowest term, we know from Lemma~\ref{lemma:doublegcd} that $\gcd(r(2^nP),D)=1$ for any $n\geq 1$. 
The conditions $2\nmid s$ and $\ord_2{r}=\ord_2{D}$ can only hold simultaneously at most once in the sequence $2^nP$. For if $s,r,D$ are all odd, then subsequent terms would have even $s$. On the other hand, since the $x$-coordinates of double points must be squares, $2\parallel r$ can only happen in the first term. 
Noting that $\sum_{n=0}^{\infty}\frac{1}{4^n}=\frac{4}{3}$, we get~\eqref{eq:height}.

For~\eqref{eq:heightn}, suppose instead $-D<x(P)<0$. Then from~\eqref{eq:doubleh}, we have
\begin{multline*}
h(P)-\frac{1}{4}h(2P)\\
=\One{r>s}\log |x|-\frac{1}{2}\log\left(1+\frac{r^2}{D^2s^2}\right)
-\log \left|\frac{D}{\gcd(r,D)}\right|
+\One{s\text{ odd}}\One{\ord_2{r}=\ord_2{D}} 2\log 2.
\end{multline*}
Apply this to~\eqref{eq:tele}.
Similar to the argument for~\eqref{eq:height}, but here instead 
\[0<\log\left(1+\frac{r^2}{D^2s^2}\right)<\log 2,\]
we get~\eqref{eq:heightn}.

Finally~\eqref{eq:height_sq} follows from~\eqref{eq:height} and Lemma~\ref{lemma:doublegcd}. 
\end{proof}

Estimates equivalent to~\eqref{eq:height_sq} were obtained in~\cite[Section~2]{BST} by analysing the local height functions specifically for $\E_D$. The inequalities~\eqref{eq:height},\eqref{eq:heightn} with larger constant terms can be obtained via a study of local heights by applying theorems for general elliptic curves~\cite[Theorem~4.1, Theorem~5.4]{SilvermanHeight},
and~\cite[Theorem~5.2]{SilvermanAdd}. 

For general algebraic points on $\E_D$, we obtain the following estimate by applying~\cite[Equation(3)]{SilvermanHeight}, noting that the discriminant of $\E_D$ is $\Delta_D=(2D)^6$ and $j$-invariant is $1728$.
\begin{lemma}
Any $P\in \E_D(\Qbar)$ satisfies
\begin{equation}\label{eq:heightnf}
|\h(P)-h(P)|<\log D+4.6.\end{equation}
\end{lemma}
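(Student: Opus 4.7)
The plan is to invoke Silverman's explicit comparison between the naive and canonical heights on an elliptic curve over a number field \cite[Eq.~(3)]{SilvermanHeight}, and then substitute the two arithmetic invariants of $\E_D$ that the author has already singled out. In Silverman's normalization $\hat{h}_{S}(P) = \lim h(nP)/(2n^2)$ the bound takes the shape
\[
 \bigl|\hat{h}_{S}(P) - \tfrac{1}{2}h(P)\bigr| \;\leq\; \tfrac{1}{12}h(j_E) + \tfrac{1}{12}h(\Delta_E) + c_0
\]
for an absolute constant $c_0$ (one can take $c_0\approx 1.07$). Since the paper's $\h$ is defined without the customary factor of $\frac{1}{2}$ (see the footnote following \eqref{eq:tele}), we have $\h = 2\hat{h}_{S}$, so doubling yields
\[
 |\h(P) - h(P)| \;\leq\; \tfrac{1}{6}h(j_E) + \tfrac{1}{6}h(\Delta_E) + 2c_0.
\]

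Next I would compute the two invariants. For the short Weierstrass form $y^2=x^3-D^2x$, the discriminant is $-16\bigl(4(-D^2)^3\bigr) = 64 D^6 = (2D)^6$, and the $j$-invariant is $1728$ (as always when the $B$-coefficient vanishes). Therefore $h(\Delta_D) = 6\log(2D) = 6\log 2 + 6\log D$ and $h(j) = \log 1728$, both of which are the exact values quoted in the statement preceding the lemma.

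Substituting these into the doubled Silverman bound gives
\[
 |\h(P) - h(P)| \;\leq\; \log D \;+\; \log 2 \;+\; \tfrac{1}{6}\log 1728 \;+\; 2c_0.
\]
The remaining step is purely numerical: one checks that $\log 2 + \tfrac{1}{6}\log 1728 + 2c_0 \approx 0.693 + 1.243 + 2.14 \approx 4.08 < 4.6$, whence $|\h(P) - h(P)| < \log D + 4.6$ as claimed.

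There is essentially no obstacle in this argument; the lemma is a direct specialization of Silverman's theorem for $\E_D$. The only things that need mild care are (i) tracking the factor of $2$ between Silverman's convention and the author's convention for the canonical height, and (ii) choosing a concrete form of Silverman's inequality whose explicit constant $c_0$ leaves enough slack for the rounded final value $4.6$. Either Silverman's 1990 bound with $c_0\approx 1.07$ suffices, or one can instead use the slightly sharper versions in the literature; in all cases the dependence on $D$ is controlled entirely through $h(\Delta_D) = 6\log(2D)$, producing the single $\log D$ in the bound.
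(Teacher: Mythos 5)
Your proposal is correct and matches the paper's proof exactly: the paper likewise cites Silverman's Equation~(3), records $\Delta_D=(2D)^6$ and $j=1728$, and the bound follows immediately; you have merely spelled out the factor-of-$2$ normalization bookkeeping and the arithmetic that the paper leaves implicit.
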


Since $\h(2P)=h(2P)+O(1)$ by~\eqref{eq:height_sq},
we have
\begin{equation}\label{eq:doublelb}
\h(2P)=h(2P)-2\log 2\geq \log D-2\log 2.
\end{equation}
Therefore for any $P\in \E_D(\Q)\setminus\T_D$,
\begin{equation}\label{eq:Lang}
\h(P)\geq \frac{1}{4}\log D-\frac{1}{2}\log 2.
\end{equation}
The equation~\eqref{eq:Lang} is a version of Lang's conjecture, which says that the canonical height of a non-torsion point on
an elliptic curve should satisfy
\[\h(P)\gg\log |\Delta|,\]
where $\Delta$ is the discriminant of the elliptic curve. This conjecture was proven for elliptic curves with integral $j$-invariant~\cite{Silvermanheightlb}, for elliptic curves which are twists~\cite{Silvermanfnlb}, and for elliptic curves with bounded Szpiro ratio~\cite{HS}. The curves $\E_D$ are in all three of these categories, as remarked in~\cite{BST}.
The bound~\eqref{eq:Lang} for curves $\E_D$ with the explicit constant factor $\frac{1}{4}$ was first given in~\cite[(11)]{BST}.

\section{Bounding small points via spherical codes}\label{sec:sphere}
In this section we prove the following lemma, which gives the upper bound of $\#\mathcal{S}_{D}(R)$ for Theorem~\ref{case:positive1}.
\begin{lemma}\label{lemma:smallptbd}
Suppose $R\in \E_D(\Q)$ with $x(R)>D$. Let $\epsilon<\frac{1}{650}$. Then for any sufficiently large $D$ we have
\[\#\{P\in R+2\E_D(\Q): \h(P)\leq 2(1+\epsilon)\log D\}<(1.89)^{r+19r^{1/3}}.\]
\end{lemma}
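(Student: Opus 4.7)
The plan is to translate the coset counting problem into a lattice packing question in Euclidean space and then apply a spherical-code bound of Kabatyanskii--Levenshtein type to beat the naïve volumetric estimate.

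\emph{Euclidean setup and pairwise distance.} The canonical height vanishes on $\T_D$ and descends to a positive-definite quadratic form $\h$ on $\bar\Lambda := \E_D(\Q)/\T_D$. Extending scalars gives an inner product and norm $\|\cdot\|$ on $V := \bar\Lambda \otimes_\Z \R \cong \R^r$ with $\h(P) = \|\bar P\|^2$. A short direct check shows that reduction modulo $\T_D$ sends $R + 2\E_D(\Q)$ bijectively onto a translate of the sublattice $2\bar\Lambda \subseteq V$. For distinct $P_1, P_2 \in R + 2\E_D(\Q)$ we have $P_1 - P_2 = 2Q$ for some non-torsion $Q \in \E_D(\Q)$; then \eqref{eq:doublelb} yields
\[
\|\bar P_1 - \bar P_2\|^2 = \h(2Q) \geq \log D - 2\log 2 =: m.
\]
Setting $T := 2(1+\epsilon)\log D$, the task reduces to bounding the number $N$ of points in the ball $B(0,\sqrt T) \subset \R^r$ with pairwise distances at least $\sqrt m$. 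As $D \to \infty$, $T/m \to 2(1+\epsilon)$.

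\emph{Beyond volume packing.} The naïve volumetric bound $N \leq (1 + 2\sqrt{T/m})^r \approx (3.83)^r$ is too weak. For a sharper bound, partition the points into thin concentric annuli $A \leq \|\bar P\|^2 < A(1+\delta)$ and radially project each annulus onto the unit sphere: the Euclidean distance bound translates into angular separations
\[
\cos\theta_{ij} \leq 1 - \frac{m}{2A} + O(\delta),
\]
worst at $A \approx T$, where $\cos\theta \leq 3/4 + O(\epsilon) + O(\delta)$. The Kabatyanskii--Levenshtein bound for spherical codes at threshold $\cos\theta = 3/4$ (where $\sin\theta = \sqrt 7/4$) gives at most $e^{0.636 r} \cdot r^{O(1)}$ codewords per annulus, and $e^{0.636} < 1.89$. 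Summing over the annuli and absorbing the polynomial factors in $r$ into the exponent produces the claimed bound $N < (1.89)^{r + 19 r^{1/3}}$.

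\emph{Main obstacle.} The heart of the argument lies in carefully coordinating four ingredients—(i) the choice of annulus thickness $\delta$, (ii) the smallness of $\epsilon$ (whence the explicit constraint $\epsilon < 1/650$), (iii) the error term in an effective Kabatyanskii--Levenshtein bound at $\cos\theta = 3/4$, and (iv) the number of annuli—so that all sub-exponential corrections combine into the $19 r^{1/3}$ coefficient. Heuristically, choosing $\delta$ and $\epsilon$ of order $r^{-2/3}$ produces corrections of the right shape $r^{1/3}$. A subsidiary check is that replacing a sphere by a thin annulus in the Kabatyanskii--Levenshtein input introduces only negligible distortion of angles, provided $\delta$ is much smaller than the gap between $1$ and $1 - m/(2T)$.
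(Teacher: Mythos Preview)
Your strategy---reduce to a packing problem in $\R^r$ and apply Kabatyanskii--Levenshtein---is the same as the paper's, and your annular decomposition would work, but the paper replaces it with a single cleaner step. Instead of slicing $B(0,\sqrt{T})$ into shells and projecting each radially, the paper lifts the whole ball onto the upper hemisphere of $S^{r}_{\sqrt{T}}\subset\R^{r+1}$ via $p\mapsto(p,\sqrt{T-\|p\|^2})$; this map never decreases pairwise distances, so the lifted configuration is a \emph{single} spherical code in dimension $r+1$ with separation exactly $\sin(\theta/2)=\sqrt{m}/(2\sqrt{T})=1/(2\sqrt{2(1+\epsilon)})-o(1)$. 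One application of KL then suffices, with no $\delta$-perturbation of the angle and no sum over shells. What each approach buys: the hemispherical lift trades one extra dimension for the elimination of all annular bookkeeping; your route stays in dimension $r$ but must balance the shell thickness against the angle loss.

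Two small corrections to your write-up. First, $\epsilon$ is fixed by hypothesis and cannot be taken of order $r^{-2/3}$; the constraint $\epsilon<1/650$ is chosen precisely so that the KL base at the angle above stays below $1.89$. Second, the $19r^{1/3}$ term does not arise from any annular parameter: it is intrinsic to the effective KL bound, coming from the choice $k\approx rN+Cr^{1/3}$ needed to ensure $\cos\theta\le t_k^{\alpha}$ in the Kabatyanskii--Levenshtein machinery. The paper then dispatches the range $r<2000$ separately using the older Rankin and Shannon bounds, a step your sketch omits.
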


We first show how Lemma~\ref{lemma:smallptbd} implies the required upper bound of $\#\mathcal{S}_{D}(R)$. This follows from the claim that 
\[\mathcal{S}_{D}(R)\subseteq \{P\in R+2\E_D(\Q): \h(P)\leq 2(1+\epsilon+o(1))\log D\} .\]
Suppose $P\in \mathcal{S}_{D}(R)$, we need to show that $h(P)\leq 2(1+\epsilon)\log D$ implies $\h(P)\leq 2(1+\epsilon)\log D+O(1)$. 
If $x(P)\geq D$, from \eqref{eq:height} we have  $\h(P)\leq h(P)+O(1)\leq 2(1+\epsilon)\log D+O(1)$.
If $-D\leq x(P)\leq 0$,  from \eqref{eq:heightn} we have $\h(P)\leq h(P)+\log D+O(1)\leq  2\log D 
+O(1)$.

We now turn to the proof of Lemma~\ref{lemma:smallptbd}. We know that the the canonical height of the difference between any two distinct points in $R+2\E_D(\Q)$ is at least $\log D+O(1)$ from equation~\eqref{eq:doublelb}.
Viewing $R+2\E_D(\Q)$ as a Euclidean space $\R^r$ of dimension $r$, we can bound the number of points by the maximum number of spheres of radius $\frac{1}{2}\sqrt{\log D}+O(1)$ with centres lying inside a sphere $S_R^{r-1}$ of radius $R=\sqrt{2(1+\epsilon)\log D}$.
This is similar to the covering argument used in \cite{BZ} to obtain bounds on the number of rational points with bounded height on elliptic curves with full rational $2$-torsion.

A \emph{spherical code} in dimension $r$ with minimum angle $\theta$ is a set of points on the unit sphere $S_1^{r-1}$ in $\mathbb{R}^r$ with the property 
that no two points subtend an angle less than $\theta$ at the origin.
Let $A(r,\theta)$ denote the greatest size of such a spherical code.

We can obtain an upper bound in terms of the function $A$ via a classical argument (see for example the proof of (2.1) in~\cite{CohnZhao}).
Project the sphere centres in $S_R^{r-1}$ onto the upper hemisphere of $S_R^r$ orthogonally to the hyperplane. The projections of the sphere centres are still at least distance $\sqrt{\log D}+O(1)$ apart, and thus separated by angles of at least $\theta$, where $\sin\frac{\theta}{2}=\frac{1}{2\sqrt{2(1+\epsilon)}}-o(1)$.
Therefore the number of small points is bounded above by $\leq A(r+1,\theta)$.

\subsection{For large dimensions}

Kabatiansky and Levenshtein proved the following upper bound on $A(r,\theta)$.
\begin{theorem}[{\cite[(52)]{KabLev}}]
Let $r\geq 3$ and $\alpha=\frac{r-3}{2}$,
and let $t_{k}^{\alpha}$ be the largest root of 
\[P_k^{\alpha}(t)=\frac{1}{2^k}\sum_{i=0}^k\binom{k+\alpha}{i}\binom{k+\alpha}{k-i}(t+1)^i(t-1)^{k-i}.\]
Take any $k$ such that
$\cos\theta\leq t_{k}^{\alpha}$.
Then
\[A(r,\theta)\leq \frac{4}{1-t_{k+1}^{\alpha}}\binom{k+r-2}{r}.\]
\end{theorem}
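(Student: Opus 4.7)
The plan is to prove this via the linear programming (LP) bound for spherical codes, originally due to Delsarte--Goethals--Seidel and refined by Kabatiansky--Levenshtein. The framework rests on the following positivity property: the Jacobi polynomials $P_k^{\alpha}(t)$ (with $\alpha=(r-3)/2$) are the correct polynomials because, by Schoenberg's theorem, for any finite set $C\subset S^{r-1}$ the Gram matrix $\bigl(P_k^{\alpha}(\langle x,y\rangle)\bigr)_{x,y\in C}$ is positive semidefinite. Therefore $\sum_{x,y\in C}P_k^{\alpha}(\langle x,y\rangle)\geq 0$ for every $k\geq 0$.

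First I would establish the general LP bound: if $f(t)=\sum_{k\geq 0}c_k P_k^{\alpha}(t)$ with $c_0>0$, $c_k\geq 0$ for $k\geq 1$, and $f(t)\leq 0$ for all $t\in[-1,\cos\theta]$, then for any spherical code $C\subset S^{r-1}$ of minimum angle $\theta$,
\[
|C|\leq \frac{f(1)}{c_0}.
\]
The derivation is the usual double-counting trick: expand $\sum_{x,y\in C}f(\langle x,y\rangle)$ two ways. Separating the diagonal ($x=y$, contributing $|C|f(1)$) from the off-diagonal (each term $\leq 0$ by the angle hypothesis) gives an upper bound of $|C|f(1)$. Using the Jacobi expansion and Schoenberg positivity gives a lower bound of $c_0|C|^2$. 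Combining produces the inequality.

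Second, I would specialize to the Kabatiansky--Levenshtein test polynomial. The classical choice is
\[
f(t)=\frac{\bigl(P_{k+1}^{\alpha}(t)P_k^{\alpha}(s)-P_k^{\alpha}(t)P_{k+1}^{\alpha}(s)\bigr)^2}{t-s},
\]
evaluated at $s=t_{k+1}^{\alpha}$, the largest root of $P_{k+1}^{\alpha}$. The factor $t-s$ in the denominator makes the ratio a polynomial (by L'Hôpital at $t=s$), and the square plus the sign of $t-s$ for $t\leq s$ give $f(t)\leq 0$ on $[-1,t_{k+1}^{\alpha}]$; the hypothesis $\cos\theta\leq t_k^{\alpha}<t_{k+1}^{\alpha}$ ensures the non-positivity on $[-1,\cos\theta]$. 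Non-negativity of the Jacobi coefficients $c_j$ is the content of the classical three-term-recursion/Christoffel--Darboux analysis for orthogonal polynomials, together with the interlacing of the roots of $P_k^{\alpha}$ and $P_{k+1}^{\alpha}$; this is where the special role of the largest root $t_{k+1}^{\alpha}$ appears. With this polynomial, evaluating $f(1)$ (using the explicit value $P_k^{\alpha}(1)=\binom{k+\alpha}{k}$) and $c_0$ (extracted from the orthogonality measure) yields, after simplification, the stated bound
\[
A(r,\theta)\leq \frac{4}{1-t_{k+1}^{\alpha}}\binom{k+r-2}{r}.
\]

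The main obstacle is the verification that the Jacobi expansion of the chosen $f$ has non-negative coefficients. This is not an obvious algebraic identity; it requires the Christoffel--Darboux kernel interpretation of the numerator, so that $f$ is essentially the squared kernel divided by $t-s$, making $c_j\geq 0$ follow from the product formula for Jacobi polynomials together with the positivity of the associated linearization coefficients (Gasper's theorem in the generality needed here). Once this positivity is granted, the bookkeeping to extract the explicit constants $4/(1-t_{k+1}^{\alpha})$ and $\binom{k+r-2}{r}$ is routine combinatorics of $P_k^{\alpha}(1)$ and of the norm of $P_k^{\alpha}$ in $L^2\bigl([-1,1],(1-t^2)^{\alpha}\,dt\bigr)$.
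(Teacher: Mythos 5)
This theorem is not proved in the paper; it is quoted verbatim from Kabatiansky--Levenshtein (their equation (52)) and used as a black box, so there is no in-paper argument to compare against. Your sketch does follow the standard high-level route of the cited source: the Delsarte linear-programming bound for spherical codes via Schoenberg positive-definiteness, applied to a Christoffel--Darboux-type test polynomial anchored at the largest root $t_{k+1}^{\alpha}$.

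That said, the central difficulty of the Kabatiansky--Levenshtein proof is exactly the step you set aside as granted: the non-negativity of the Gegenbauer expansion coefficients of $f$. Your $f$ is (up to a positive constant) $(t-s)\bigl(K_k(t,s)\bigr)^2$ with $s=t_{k+1}^{\alpha}$, which simplifies, since $P_{k+1}^{\alpha}(s)=0$, to $C\cdot(t-s)\,Q(t)^2$ where $P_{k+1}^{\alpha}(t)=(t-s)Q(t)$. Gasper's (or Gegenbauer's) positivity of linearization coefficients handles the pure square $Q(t)^2$, but the extra factor $(t-s)$ is not innocuous: multiplying by $t$ in the three-term recurrence mixes a negative shift $-s\cdot(\text{coefficient})$ into every Fourier--Gegenbauer coefficient, and it is precisely the delicate cancellation, controlled by the fact that $s$ is the largest root and by interlacing, that makes this work. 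This is the content of Levenshtein's positivity argument, and it cannot be waved through as ``classical three-term-recursion/Christoffel--Darboux analysis.'' Relatedly, I do not think the explicit constants drop out of your normalization: with your $f$, $f(1)=\bigl(P_{k+1}^{\alpha}(1)\bigr)^2/(1-t_{k+1}^{\alpha})=\binom{k+1+\alpha}{k+1}^2/(1-t_{k+1}^{\alpha})$, and getting this together with $c_0$ to produce exactly $\frac{4}{1-t_{k+1}^{\alpha}}\binom{k+r-2}{r}$ requires the parameter-shifted (adjacent) Jacobi normalization that Kabatiansky--Levenshtein actually use, not the one you wrote. So the framework is right, but the proposal as written leaves both the positivity crux and the constant bookkeeping unestablished, and those are not routine.
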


From the proof of~\cite[Lemma~4]{KabLev}, we know that
\[\tau_k-\frac{2\pi^{2/3}}{((k+\alpha)(k+\alpha+1)\tau_k)^{1/3}}\leq t_k^{\alpha}\leq \tau_k,\]
where
\[\tau_k=\sqrt{1-\frac{\alpha^2-1}{(k+\alpha)(k+\alpha+1)}}.\]
Therefore if we take $k$ such that
\begin{equation}\label{eq:tkrange}
\cos\theta\leq \tau_k-\frac{2\pi^{2/3}}{((k+\alpha)(k+\alpha+1)\tau_k)^{1/3}},\end{equation}
and since
$t_{k+1}^{\alpha}\leq \tau_{k+1}$,
we have
\[A(r,\theta)\leq \frac{4}{1-\tau_{k+1}}\binom{k+r-2}{r}.\]
Take $\theta$ such that $\sin\frac{\theta}{2}=\frac{1}{2\sqrt{2(1+\epsilon)}}-o(1)$.
Let 
\[N=\frac{1-\sin\theta}{2\sin\theta}
=\frac{2(1+\epsilon)}{\sqrt{7+8\epsilon}}-\frac{1}{2}+o(1),\]
so that
\[\tau_k\rightarrow \sqrt{1-\frac{1}{(2N+1)^2}}\]
as $k\rightarrow\infty$ and $\frac{k}{r}\rightarrow N$.

Fix some
\begin{equation}\label{eq:crange}
C^3> 16\pi^2 N(N+1)(2N+1)^5
\end{equation}
Take
$k-2=\left\lfloor rN+Cr^{1/3}\right\rfloor$,
so~\eqref{eq:tkrange} is satisfied for large enough $r$.
 By Stirling's formula, we have
\[\binom{k+r-2}{r}\leq 
\frac{e}{2\pi}\frac{(k+r-2)^{k+r-\frac{3}{2}}}{r^{r+\frac{1}{2}}(k-2)^{k-2+\frac{1}{2}}}
\leq 
\frac{e}{2\pi\sqrt{r}}
\left(1+\frac{1}{N}\right)^{Cr^{1/3}+\frac{1}{2}}
\left(\frac{(1+N)^{1+N}}{N^N}\right)^r
\]
for large enough $r$.
Therefore for large enough $r$, we have the upper bound
\begin{equation}\label{eq:Abd}
A(r,\theta)<\left(\frac{(1+N)^{1+N}}{N^N}\right)^{r+\frac{\log (1+N)-\log N}{(1+N)\log (1+N)-N\log N}Cr^{1/3}},
\end{equation}
taking some small $C$ in the range~\eqref{eq:crange}.

We can now prove Lemma~\ref{lemma:smallptbd} for $r\geq 2000$.
Take $\epsilon<\frac{1}{650}$ and $C=\frac{189}{25}$, so~\eqref{eq:crange} is satisfied. 
Then we can rewrite the bound in~\eqref{eq:Abd} as
$A(r,\theta)<(1.89)^{r+19r^{1/3}}$.

\subsection{For small dimensions}
To prove Lemma~\ref{lemma:smallptbd}, it remains to check the same bound holds for $r<2000$.
The two following bounds, obtained respectively by Rankin and Shannon, are weaker asymptotically when $r\rightarrow\infty$ but are better bounds for small $r$.

\begin{theorem}[{\cite[Theorem~2]{Rankincaps}}]\label{theorem:Rankin}
If $0<\theta<\frac{\pi}{4}$ and $\sin\beta=\sqrt{2}\sin\theta$, then
\[
\begin{split}
A(r,\theta)
&\leq 
\frac{\sqrt{\pi}\Gamma(\frac{r-1}{2})\sin\beta\tan\beta}
{2\Gamma(\frac{r}{2})\int_0^{\beta}\sin^{r-2} x(\cos x -\cos\beta)dx}\\
&\leq 
\frac{2\sqrt{\pi}\Gamma(\frac{r+3}{2})\cos\beta}
{\Gamma(\frac{r}{2})\sin^{r-1}\beta(1-\frac{3}{r+3}\tan^2 \beta)}
\sim
\frac{\sqrt{\frac{1}{2}\pi r^3\cos 2\theta}}
{(\sqrt{2}\sin\theta)^{r-1}}.
\end{split}\]
\end{theorem}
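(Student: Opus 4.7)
The plan is to follow Rankin's double-counting argument on the sphere $S^{r-1}$. Let $P_1,\ldots,P_N\subset S^{r-1}$ be a spherical code with pairwise angles at least $\theta$, and choose $\beta\in(0,\pi/2)$ with $\sin\beta=\sqrt{2}\sin\theta$, equivalently $\cos^2\beta=\cos 2\theta$. I would introduce the weighted potential
\[
F(Q) \;=\; \sum_{i=1}^N \bigl(\langle Q, P_i\rangle-\cos\beta\bigr)_+, \qquad Q\in S^{r-1},
\]
and compute its spherical average. By Fubini and rotation invariance, with $\omega_{r-1}=2\pi^{(r-1)/2}/\Gamma((r-1)/2)$ the surface area of $S^{r-2}$,
\[
\int_{S^{r-1}}\!F(Q)\,d\sigma(Q) \;=\; N\omega_{r-1}\!\int_0^\beta \sin^{r-2}x(\cos x-\cos\beta)\,dx.
\]
This produces the integral in the denominator of the claimed bound.

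The heart of the argument is to establish the inequality $\int_{S^{r-1}}\!F\,d\sigma\leq \tfrac12\sin\beta\tan\beta\cdot\mathrm{vol}(S^{r-1})$. For fixed $Q$, I would restrict attention to the indices $S(Q)=\{i:\langle Q,P_i\rangle>\cos\beta\}$ and decompose each $P_i$ for $i\in S(Q)$ as $\cos\alpha_i\,Q+\sin\alpha_i\,u_i$ with $u_i\in Q^\perp\cap S^{r-1}$ and $\alpha_i<\beta$. The minimum-angle condition $\langle P_i,P_j\rangle\leq\cos\theta$ then becomes a geometric constraint on the unit vectors $\{u_i\}$, and the identity $\cos^2\beta=\cos 2\theta$ is chosen precisely so that a Cauchy--Schwarz estimate on $\bigl|\sum_{i\in S(Q)}P_i\bigr|^2\leq |S(Q)|+|S(Q)|(|S(Q)|-1)\cos\theta$, combined with the induced constraint on $\{u_i\}$, collapses to a bound whose integral over $Q\in S^{r-1}$ is exactly $\tfrac12\sin\beta\tan\beta\cdot\mathrm{vol}(S^{r-1})$. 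Combining with the first step and using $\mathrm{vol}(S^{r-1})/\omega_{r-1}=\sqrt{\pi}\,\Gamma((r-1)/2)/\Gamma(r/2)$ rearranges to the first displayed inequality.

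For the explicit middle form, I would integrate the denominator by parts via $\int_0^\beta\sin^{r-2}x\cos x\,dx=\sin^{r-1}\beta/(r-1)$, and then control $\cos\beta\int_0^\beta\sin^{r-2}x\,dx$ from above by a Wallis-type recursion to extract the factor $(1-\tfrac{3}{r+3}\tan^2\beta)$ and the Gamma-ratio $\Gamma((r+3)/2)/\Gamma(r/2)$. The asymptotic as $r\to\infty$ then follows by Stirling, giving $\Gamma((r+3)/2)/\Gamma(r/2)\sim(r/2)^{3/2}$, together with $\sin^{r-1}\beta=(\sqrt{2}\sin\theta)^{r-1}$ and $\tan\beta\sim\sqrt{2}\sin\theta/\sqrt{\cos 2\theta}$. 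The main obstacle is the integrated bound in the second paragraph: naive pointwise estimates for $F(Q)$ are too weak at ``midpoint'' configurations where $Q$ lies exactly between two clashing code points, so the argument must exploit the averaging over $Q$ -- together with the sharp algebraic input $\sin\beta=\sqrt{2}\sin\theta$ -- to extract the constant $\tfrac12\sin\beta\tan\beta$.
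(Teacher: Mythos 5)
The paper does not prove this theorem; it is quoted directly from Rankin's 1955 paper \cite{Rankincaps}, so there is no paper-internal proof to compare against. I will therefore assess your proposal on its own merits.

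Your setup is clean: defining $F(Q)=\sum_i(\langle Q,P_i\rangle-\cos\beta)_+$ and computing its spherical average by rotation invariance does give exactly $N\omega_{r-1}\int_0^\beta\sin^{r-2}x(\cos x-\cos\beta)\,dx$, and your dictionary $\mathrm{vol}(S^{r-1})/\omega_{r-1}=\sqrt{\pi}\,\Gamma(\tfrac{r-1}{2})/\Gamma(\tfrac{r}{2})$ is correct. Your fourth paragraph (the middle inequality and the asymptotic) also goes through: writing $I=\int_0^\beta\sin^{r-2}x(\cos x-\cos\beta)\,dx$, one has the identity $I=\frac{\cos\beta}{r-1}\int_0^\beta\frac{\sin^r x}{\cos^2 x}\,dx$, and iterating the analogous integration by parts with $\sin^{r+1}x/\cos^3 x$ and $\sin^{r+3}x/\cos^5 x$ yields $I\geq\frac{\sin^{r-1}\beta\tan^2\beta}{r^2-1}\bigl(1-\frac{3}{r+3}\tan^2\beta\bigr)$, which rearranges to the stated middle form and then to the asymptotic by Stirling.

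The genuine gap is the centerpiece you label ``the heart of the argument'': the integral estimate $\int_{S^{r-1}}F\,d\sigma\leq\tfrac12\sin\beta\tan\beta\cdot\mathrm{vol}(S^{r-1})$. You never actually prove it. Your proposed mechanism --- the Cauchy--Schwarz bound $\bigl|\sum_{i\in S(Q)}P_i\bigr|^2\leq k+k(k-1)\cos\theta$ with $k=|S(Q)|$ --- yields the pointwise estimate $F(Q)\leq\sqrt{k(1+(k-1)\cos\theta)}-k\cos\beta$, and this quantity is \emph{unbounded} in $k$: for large $k$ it grows like $k(\sqrt{\cos\theta}-\cos\beta)$, whose slope is strictly positive whenever $\cos\theta>\cos^2\beta=\cos 2\theta$, i.e.\ for every $\theta\in(0,\pi/4)$. (Concretely at $\theta=\pi/6$, $\beta=\pi/4$: $\tfrac12\sin\beta\tan\beta=\tfrac{1}{2\sqrt2}\approx0.354$, while already $g(2)=\sqrt{2+\sqrt3}-\sqrt2\approx0.518$ and $g(k)$ keeps increasing.) So there is no pointwise bound to integrate, and the claim that the Cauchy--Schwarz estimate ``combined with the induced constraint on $\{u_i\}$, collapses to a bound whose integral over $Q$ is exactly $\tfrac12\sin\beta\tan\beta\cdot\mathrm{vol}(S^{r-1})$'' is a promissory note, not an argument. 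You do flag this yourself as ``the main obstacle,'' but flagging it does not fill it. Any completion of your approach must explain why the region of $Q$ where $|S(Q)|$ is large has small enough measure to compensate; that global packing input --- not a local Cauchy--Schwarz at each $Q$ --- is exactly what is missing. As it stands, the proposal establishes the reduction and the routine estimates but not the theorem.
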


\begin{theorem}[{\cite[(21),(27)]{Shannon}}]\label{theorem:Shannon}
Suppose $0<\theta<\frac{\pi}{2}$. Then
\[A(r,\theta)\leq 
\frac{\sqrt{\pi}\Gamma(\frac{r-1}{2})}{
\Gamma(\frac{r}{2})\int_{0}^{\theta}\sin^{r-2}x dx}
\leq
\frac{2\sqrt{\pi}\Gamma(\frac{r+1}{2})\cos \theta}{\Gamma(\frac{r}{2})\sin^{r-1}\theta(1-\frac{1}{r}\tan^2\theta)}
\sim \frac{\sqrt{2\pi r}\cos \theta}{\sin^{r-1}\theta}.\]
\end{theorem}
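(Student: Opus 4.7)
The natural approach is a volumetric sphere-packing argument, together with elementary integral and gamma-function estimates.

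First, I would parametrize $S^{r-1}\subset\R^r$ by the angular distance $\phi$ from a fixed pole; in these coordinates the surface-area element is $\omega_{r-1}\sin^{r-2}\phi\,d\phi$, where $\omega_{r-1}=2\pi^{(r-1)/2}/\Gamma((r-1)/2)$ is the surface area of $S^{r-2}$, and the total area is $\omega_r=2\pi^{r/2}/\Gamma(r/2)$. For a spherical code $\{x_1,\dots,x_M\}$ with minimum angle $\theta$, the open caps of angular radius $\theta/2$ around the $x_i$ are pairwise disjoint by the spherical triangle inequality. Summing their areas and dividing by $\omega_r$ yields the Rankin-style packing bound $M\leq \omega_r/(\omega_{r-1}\int_0^{\theta/2}\sin^{r-2}\phi\,d\phi)$, which rearranges to the stated first inequality (up to the refinement from $\theta/2$ to $\theta$ that constitutes Shannon's improvement, which I would model on Shannon's Voronoi/decoding-region argument exploiting the freedom in the range $\theta<\pi/2$).

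Second, for the middle inequality I need a lower bound on $\int_0^\theta\sin^{r-2}x\,dx$. Writing $\sin^{r-2}x=(\sin^{r-2}x\cos x)/\cos x$ and integrating by parts with $U=1/\cos x$, $dV=\sin^{r-2}x\cos x\,dx$ gives
\[\int_0^\theta\sin^{r-2}x\,dx=\frac{\sin^{r-1}\theta}{(r-1)\cos\theta}-\frac{1}{r-1}\int_0^\theta\frac{\sin^r x}{\cos^2 x}\,dx.\]
Bounding $1/\cos^2 x\leq 1/\cos^2\theta$ on $[0,\theta]$ and applying the standard reduction formula $\int_0^\theta\sin^r x\,dx=\frac{r-1}{r}\int_0^\theta\sin^{r-2}x\,dx-\frac{\sin^{r-1}\theta\cos\theta}{r}$ produces a linear inequality in $\int_0^\theta\sin^{r-2}x\,dx$ which solves to $\int_0^\theta\sin^{r-2}x\,dx\geq \sin^{r-1}\theta(1-\tan^2\theta/r)/((r-1)\cos\theta)$. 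Combining with the duplication identity $(r-1)\Gamma((r-1)/2)=2\Gamma((r+1)/2)$ then yields the middle inequality.

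Finally, the asymptotic follows from Stirling's formula: $\Gamma((r+1)/2)/\Gamma(r/2)\sim\sqrt{r/2}$ as $r\to\infty$ gives $2\sqrt{\pi}\,\Gamma((r+1)/2)/\Gamma(r/2)\sim\sqrt{2\pi r}$, while $(1-\tan^2\theta/r)\to 1$.

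The main obstacle I anticipate is Shannon's sharpening from $\theta/2$ to $\theta$ in the integral: the pure disjoint-cap argument yields only the weaker Rankin form, and the step to the stated bound is not a direct packing argument. The integral estimate itself is also somewhat delicate—a cruder substitution (replacing $\int\sin^r x\,dx$ by $\sin^2\theta\int\sin^{r-2}x\,dx$) yields only a factor $(1-\tan^2\theta/(r-1))$, so the reduction formula must be substituted at precisely the point indicated above in order to recover the sharper $1/r$ in the bracket.
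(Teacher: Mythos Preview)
The paper does not prove this statement; it is quoted from Shannon's 1959 paper as a black box, so there is no argument in the paper to compare yours against. That said, here are comments on your outline.

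Your treatment of the middle inequality and the asymptotic is correct. After the step $\int_0^\theta\frac{\sin^r x}{\cos^2 x}\,dx\le\frac{1}{\cos^2\theta}\int_0^\theta\sin^r x\,dx$ and the reduction formula, one obtains
\[
I\Bigl(1+\tfrac{1}{r\cos^2\theta}\Bigr)\ \ge\ \frac{\sin^{r-1}\theta}{(r-1)\cos\theta}\cdot\frac{r+1}{r},
\qquad I:=\int_0^\theta\sin^{r-2}x\,dx,
\]
and a short algebraic check shows this implies the claimed lower bound $I\ge\sin^{r-1}\theta\,(1-\tfrac{1}{r}\tan^2\theta)/((r-1)\cos\theta)$. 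Together with $(r-1)\Gamma(\tfrac{r-1}{2})=2\Gamma(\tfrac{r+1}{2})$ this yields the middle expression, and Stirling gives the asymptotic.

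For the first inequality your suspicion is exactly right that the disjoint-cap argument only reaches $\int_0^{\theta/2}$, but this is not a gap you can close: read literally with $\theta$ equal to the minimum angle of the code (the paper's own definition of $A(r,\theta)$), the inequality is \emph{false}. For $r=3$ and $\theta=\pi/3$ the right-hand side is
\[
\frac{\sqrt{\pi}\,\Gamma(1)}{\Gamma(\tfrac32)\int_0^{\pi/3}\sin x\,dx}
=\frac{\sqrt{\pi}}{(\sqrt{\pi}/2)\cdot(1/2)}=4,
\]
whereas $A(3,\pi/3)=12$ (the kissing configuration). The displayed quantity is precisely $\omega_r$ divided by the area of a cap of angular \emph{radius} $\theta$, i.e.\ the packing bound for codes of minimum angle $2\theta$; this is the convention in both Shannon's and Rankin's original papers, and it also explains the otherwise puzzling restriction $\theta<\pi/4$ in the Rankin theorem just above. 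Under that reading your disjoint-cap argument \emph{is} the complete proof of the first inequality, and the ``Shannon sharpening from $\theta/2$ to $\theta$'' you were searching for does not exist.
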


Evaluating the bounds in Theorems~\ref{theorem:Rankin} and~\ref{theorem:Shannon} for $r< 2000$ proves Lemma~\ref{lemma:smallptbd} in those cases.

\section{Repulsion between medium points}
Suppose $P=(X,Y)$ and $R=(x,y)$ are integral points in the same coset of $2\E_D(\Q)$.
Assume $D^{2(1+\epsilon)}<x<X$ and $Yy>0$. 
Suppose $P=2Q+R$ for some $Q\in \E_D(\Q)$.
Replacing $Q$ with one of $Q+(0,0)$, $Q+(D,0)$, $Q+(-D,0)$ if necessary, we can assume $x(Q)>(1+\sqrt{2})D$. 
\begin{lemma}\label{lemma:sumlb}
Suppose $P_1=(x_1,y_1),\ P_2=(x_2,y_2)\in \E_D(\Q)$ and $D<x_1\leq x_2$.
Let $B=\frac{x_2}{x_1}$ and $\mu=\frac{D^2}{x_1^2}$.
Then 
\begin{equation}\label{eq:sumlb}
\left(\frac{B+\mu}{\sqrt{B^2-\mu}+\sqrt{B(1-\mu)}}\right)^2 x_1
\leq
x(P_1+ P_2)
\leq
\left(\frac{\sqrt{B^2-\mu}+\sqrt{B(1-\mu)}}{B -1}\right)^2x_1
.
\end{equation}
Moreover \begin{equation}\label{eq:sumlbsim}
\frac{1}{4} x(P_1)
\leq
x(P_1+ P_2)
\leq
\left(\frac{2B}{B -1}\right)^2
 x(P_1)
.
\end{equation}
\end{lemma}
\begin{proof}
Assuming $y_1y_2>0$,
\[
x(P_1\pm P_2)=
\left(\frac{\sqrt{B^2-\mu}\mp\sqrt{B(1-\mu)}}{B -1}\right)^2x_1
=
\left(\frac{B+\mu}{\sqrt{B^2-\mu}\pm\sqrt{B(1-\mu)}}\right)^2x_1.\]
This shows that $x(P_1+P_2)$ have to attain the lower bound in \eqref{eq:sumlb} when $y_1y_2>0$, and attain the upper bound in \eqref{eq:sumlb} when $y_1y_2<0$.

Noting that $B\geq 1$ and $\mu<1$ by assumption, we deduce \eqref{eq:sumlbsim} from \eqref{eq:sumlb}.
\end{proof}

We now show that $x(Q+R)$ is properly bounded away from $D$.
 If $(1+\sqrt{2})D<x(Q)<4(1+\sqrt{2})D$, since we assumed $x(R)>D^{2(1+\epsilon)}$, by the lower bound in~\eqref{eq:sumlb} we have
$x(Q+R)>\frac{3}{4}(1+\sqrt{2})D$ for large enough $D$.
If $x(Q)\geq 4(1+\sqrt{2})D$, then $x(Q+R)\geq (1+\sqrt{2})D$ by the lower bound in~\eqref{eq:sumlbsim}.

\begin{lemma}\label{lemma:dblbd}
Suppose $Q\in \E_D(\Qbar)\setminus\{\OO,(0,0),(\pm D,0)\}$ and $x(Q)\in \R$.
Then
\[ x(2Q)\geq \frac{1}{4} |x(Q)|.\]
Moreover, if in addition $x(Q)\geq\frac{1}{\delta} D$ for some $\delta>1$, then
\[\frac{1}{4}x(Q)\leq x(2Q)\ll_{\delta} x(Q).\]
\end{lemma}
\begin{proof}
If $x(Q)> D$, this follows immediately from the formula
\[
x(2Q)
=\frac{\left(1+(\frac{D}{x(Q)})^2\right)^2}{4\left(1-(\frac{D}{x(Q)})^2\right)}x(Q)
.\]
If $-D<x(Q)<0$, then it is straightforward to check that $x(2Q)>D$, so  $x(2Q)>|x(Q)|$.
\end{proof}

Trivially 
$h(Q)\geq \log x(Q)$ and $h(Q+R)\geq \log x(Q+R)$. 
By Lemma~\ref{lemma:dblbd} and the lower bounds on $x(Q)$ and $x(Q+R)$, we have $x(Q)\gg x(2Q)=x(P-R)$ and $x(Q+R)\gg x(2Q+2R)=x(P+R)$. Also
$
x(P\pm R)
\gg
x$ by Lemma~\ref{lemma:sumlb}.
Putting together we have
$
h(Q),h(Q+R)\geq \log x+O(1)$.
Now apply~\eqref{eq:height} to $P$ and $P-R$, then to $Q$ and $Q+R$, it follows that
\[\log X+\log x+O(1)\geq \h(P)+\h(R)=2\h(Q)+2\h(Q+R)\geq 4\log x-4\log D+O(1).\]
Rearranging gives
\begin{equation}\label{eq:repulsion}\log X\geq 3\log x-4\log D+O(1).
\end{equation}

\section{Large integral points giving Diophantine approximations}\label{sec:largept}
Fix an embedding $\iota:\Qbar\hookrightarrow\C$ and write $|\cdot|$ for the corresponding absolute value.
In this section we will prove the following lemma.
\begin{lemma}\label{lemma:Roth}
Suppose $P\in \E_D(\Z)$ such that $P=4\tilde{Q}+R$, for some $\tilde{Q}\in \frac{1}{2}\E_D(\Q)$ and $R\in \E_D(\Q)\setminus 2\E_D(\Q)$.
Assume that $x(R)>D$ and $h(P)>\max\{\frac{1}{\lambda}h(R),\frac{1}{\delta}\log D\}$, where $0<\delta\leq \lambda<\frac{1}{1000}$.
Take $S\in \{\tilde{S}\in \E_D(\Qbar):4\tilde{S}=R\}$ such that $|x(\tilde{Q})-x(S)|$ is minimum.
Then
\[
\frac{\log|x(\tilde{Q})-x(S)|}{h(\tilde{Q})}\leq -8\cdot \frac{1
-63\lambda-418\delta}{(1+\sqrt{\lambda})^2+16\delta}+o(1)
.
\]
\end{lemma}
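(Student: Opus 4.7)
The plan is to exploit the hypothesis $h(P)>\log D/\delta$, which forces $x(P)$ to be large, so that $P$ is very close to $\OO$ at any archimedean place $v$ of the number field generated by $x(\tilde Q)$ and $x(S)$. The sixteen preimages of $R$ under $[4]$ in $\E_D(\Qbar)$ form a coset of $\E_D[4]$, and the corresponding sums $\tilde Q+\tilde S$ range over the sixteen preimages of $P$ under $[4]$, each concentrating in the $v$-adic neighbourhood of a distinct point of $\E_D[4]$. I would choose $S$ so that $V:=\tilde Q+S$ lies in the neighbourhood of $\OO$; this is precisely the $S$ that minimises $|x(\tilde Q)-x(\tilde S)|_v$, because for any other $\tilde S'$ the point $\tilde Q$ lies near $T-\tilde S'$ with $T\in\E_D[4]\setminus\{\OO\}$ and thus stays away from $\pm\tilde S'$.

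To quantify, I would use the uniformiser $t=x/y$ at $\OO$ together with the formal group law of $\E_D$, in which $[4]$ acts as $t\mapsto 4t+O(t^{4})$. The identity $4V=P$ gives $|t(V)|_v=\tfrac14|t(P)|_v(1+o(1))$, and $|t(P)|_v\asymp x(P)^{-1/2}$ since $P$ is integral with $y(P)^2=x(P)^3-D^2 x(P)\asymp x(P)^3$. A direct expansion of the chord-and-tangent formula at $-S$, treating $V$ as a small perturbation of $\OO$, yields
\[
x(\tilde Q)=x(V-S)=x(S)+2y(S)\, t(V)+O(t(V)^2),
\]
which, combined with $\log|y(S)|_v\leq\tfrac32 h(S)+O(\log D)$, produces the estimate $\log|x(\tilde Q)-x(S)|_v\leq -\tfrac12 h(P)+\tfrac32 h(S)+O(\log D)$.

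To convert this into the stated ratio, I would apply the parallelogram law $\h(\tilde Q)=\h(P-R)/16\leq(\sqrt{\h(P)}+\sqrt{\h(R)})^2/16$ in the Mordell--Weil lattice, then \eqref{eq:heightnf}, the hypothesis $h(R)<\lambda h(P)$, and $\log D\leq\delta h(P)$, to obtain
\[
h(\tilde Q)\leq\frac{(1+\sqrt\lambda)^2(1+\delta)+16\delta}{16}\, h(P)+O(1),
\]
and analogously $h(S)\leq h(R)/16+O(\log D)\leq[(1+\delta)\lambda+16\delta]h(P)/16+O(1)$. Dividing the log-distance estimate by $h(\tilde Q)$ produces the claimed inequality.

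The main obstacle is the arithmetic bookkeeping. The leading ratio $-\tfrac12/\tfrac{1}{16}=-8$ drops out cleanly from the formal group step, but the explicit constants $63$ and $418$ in the numerator emerge only after carefully tracking several sources of $O(\log D)$ loss: the applications of \eqref{eq:heightnf} to the algebraic points $\tilde Q$ and $S$ (each costing a factor $\log D$), the gcd corrections from \eqref{eq:height} applied to $P$ and $R$, and the $\log D$ hidden inside the bound on $\log|y(S)|_v$. Selecting a grouping that yields the compact closed form in the statement, rather than a messier equivalent expression, is the most delicate aspect of the argument.
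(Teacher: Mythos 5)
Your approach is genuinely different from the paper's and, if carried out carefully, would actually yield a \emph{stronger} bound. The paper bounds $\log|x(\tilde Q)-x(S)|$ entirely algebraically: it writes the product $\prod_{4\tilde S=R}(x(\tilde Q)-x(\tilde S))$ via the division polynomial identity $\prod=\psi_4(\tilde Q)^2(x(4\tilde Q)-x(R))$, bounds $x(P)\cdot\prod^2$ by $x(R)^{33}$ using crude estimates on the addition formula and on $\psi_4^4$ (degree $30$), and then isolates the single small factor $|x(\tilde Q)-x(S)|$ by subtracting a lower bound on $|f_R'(x(S))|$ obtained from Mahler's discriminant inequality. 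Both the exponent $33$ and the exponents $15,\ 209$ coming from Mahler's bound are visibly lossy, and they are exactly where the constants $63=\tfrac{33+30}{2}\cdot 2$ and $418$ originate. Your formal-group argument sidesteps this: working only at the one archimedean place where $\tilde Q$ and $S$ are close, the identity $4V=P$ with $V=\tilde Q+S$ gives $\log|t(V)|=-\tfrac12 h(P)+O(1)$ directly, and a Taylor expansion of the chord-and-tangent formula yields $|x(\tilde Q)-x(S)|\lesssim |y(S)|\,|t(V)|$; no product over sixteen conjugates and no discriminant lower bound are needed. The height-estimates half (the parallelogram law for $\h$, conversion via \eqref{eq:heightnf}, and the resulting bound $16h(\tilde Q)/h(P)\le (1+\sqrt\lambda)^2(1+\delta)+16\delta+o(1)$) is identical in the two proofs.

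There are, however, real gaps to close. First, and most concretely, the bound $\log|y(S)|_v\le\tfrac32 h(S)+O(\log D)$ is off by the field degree: a single archimedean absolute value satisfies only $\log^+|x(S)|_v\le[\Q(x(S)):\Q]\,h(S)\le 16\,h(S)$, so the correct estimate is $\log|y(S)|_v\le 24\,h(S)+O(\log D)$. (This still gives constants far smaller than $63$ and $418$, so the lemma survives, but your statement that the paper's constants ``emerge'' from this bookkeeping is not correct — your method, done right, gives a sharper inequality.) Second, the formal-group steps — the assertion $|t(V)|_v=\tfrac14|t(P)|_v(1+o(1))$ and the expansion $x(V-S)=x(S)+2y(S)t(V)+O(t(V)^2)$ — need explicit error control, and the error terms carry powers of $D$ (the $a_4=-D^2$ coefficient enters as $O(D^2 t(V)^2)$); one must check these are dominated using $x(P)>D^{1/\delta}$. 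Third, the identification of the minimising $S$ with the one making $V=\tilde Q+S$ closest to $\OO$ requires showing the nontrivial $4$-torsion points (and hence the other fifteen preimages of $P$ under $[4]$) stay at distance $\gg D^{-1/2}$ from $\OO$, while $|t(V)|\ll x(P)^{-1/2}\ll D^{-1/2}$; you gesture at this but it should be made explicit. Finally, the case where $|y(S)|_v$ is small (so $S$ is $v$-adically near a $2$-torsion point) needs a separate remark, since then the leading Taylor term is no longer $2y(S)t(V)$; fortunately one checks this only produces a sharper bound.
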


Suppose $P,\tilde{Q},R$ satisfies the assumptions of Lemma~\ref{lemma:Roth}.
Since $2\tilde{Q}\in \E_D(\Q)$, we have $x(\tilde{Q})\in \R$. Then 
\begin{equation}\label{eq:tQsize}
x(\tilde{Q})\ll x(4\tilde{Q})=x(P-R)\ll x(R),
\end{equation}
where the lower bound follows from applying Lemma~\ref{lemma:dblbd} twice, and the upper bound is taken from~\eqref{eq:sumlbsim}. The upper bound does not depend on $\lambda$ as we have assumed that $\lambda$ is bounded from above.
\subsection{Height estimates}
Since $P=4\tilde{Q}+R$, we obtain from the triangle inequality
\begin{equation}\label{eq:triangle}
4\sqrt{\h(\tilde{Q})}-\sqrt{\h(R)}\leq \sqrt{\h(P)}\leq 4\sqrt{\h(\tilde{Q})}+\sqrt{\h(R)}.
\end{equation}
Apply the assumption  $\log D<\delta h(P)$ to \eqref{eq:height}, we obtain
\begin{equation}\label{eq:triP}
(1-\delta)h(P)-O(1)\leq \h(P)\leq h(P) +O(1).
\end{equation} 
Similarly using \eqref{eq:height} and $h(R)<\lambda h(P)$, we have
\begin{equation}\label{eq:triR}
\h(R)\leq h(R)+O(1)\leq \lambda h(P)+O(1).
\end{equation} 
On the other hand, by \eqref{eq:heightnf} and the assumption $\log D<\delta h(P)$,
\begin{equation}\label{eq:triQ}
h(\tilde{Q})-\delta h(P)-O(1)\leq \h(\tilde{Q})\leq h(\tilde{Q}) +\delta h(P)+O(1).
\end{equation} 
Applying \eqref{eq:triP} and \eqref{eq:triR} to \eqref{eq:triangle}, then squaring, we have
\begin{equation}\label{eq:PQest}
(\sqrt{1-\delta}-\sqrt{\lambda})^2-16\delta
-o(1)
\leq \frac{16h(\tilde{Q})}{h(P)}\leq
(1+\sqrt{\lambda})^2+16\delta
+o(1).
\end{equation}

\subsection{Approximation of algebraic numbers}
If $S\in \E_D(\Qbar)$ such that $4S=R$, write
\[x(R)=x(4S)=\frac{\phi_4(S)}{\psi_4(S)^2},\]
where $\psi_n$ is the $n$th-division polynomial of $\E_D$ and $\phi_n=x\psi_n^2-\psi_{n+1}\psi_{n-1}$.
Define
\begin{equation}\label{eq:f_R}
f_R(T)\coloneqq \prod_{S:4S=R}(T-x(S))=\phi_4(T)-x(R)\psi_4(T)^2.
\end{equation}
Note that every elliptic curve has $16$ $4$-torsion points over $\overline{\Q}$. The two expressions in \eqref{eq:f_R} are equivalent as they are both monic polynomials of degree $16$ with roots $\{x(S)\in\Qbar:4S=R\}$. 
Put $T=x(\tilde{Q})$, then
\[\prod_{4S=R}(x(\tilde{Q})-x(S))=\phi_4(\tilde{Q})-x(R)\psi_4(\tilde{Q})^2.\]
Substitute 
$\phi_4(\tilde{Q})=\psi_4(\tilde{Q})^2 x(4\tilde{Q})$, we get
\begin{equation}\label{eq:multiple}
\prod_{4S=R}(x(\tilde{Q})-x(S))=\psi_4(\tilde{Q})^2(x(4\tilde{Q})-x(R)).
\end{equation}

Now 
\begin{align*}
x(P)&=x(4\tilde{Q}+R)=\left(\frac{y(4\tilde{Q})-y(R)}{x(4\tilde{Q})-x(R)}\right)^2-x(4\tilde{Q})-x(R)\\
&=\frac{-y(4\tilde{Q})y(R)
+x(4\tilde{Q})^2x(R)+x(4\tilde{Q})x(R)^2
-D^2(x(4\tilde{Q})+x(R))
}
{(x(4\tilde{Q})-x(R))^2}
.
\end{align*}
Using~\eqref{eq:multiple}, we have
\begin{multline}\label{eq:xPpoly}
x(P)\left(\prod_{4S=R}(x(\tilde{Q})-x(S))\right)^2\\
=\psi_4(\tilde{Q})^4\left(-y(4\tilde{Q})y(R)
+x(4\tilde{Q})^2x(R)+x(4\tilde{Q})x(R)^2
-D^2(x(4\tilde{Q})+x(R))
\right).
\end{multline}
If $(x,y)\in \E_D(\overline{\Q})$ and $x\in\R$, then we must have $y^2\ll\max\{x,D\}^{3}$, so
\[\psi_4((x,y))^4=(4y(x^6-5D^2x^4-5D^4x^2+D^6))^4\ll\max\{x,D\}^{4\cdot \frac{15}{2}},\]
and we can bound \eqref{eq:xPpoly} by
\[\ll
x(4\tilde{Q})x(R)^2\max\{x(\tilde{Q}),D\}^{30}
\ll x(R)^{33}
\ll x(P)^{33\lambda}
,\]
where we have applied \eqref{eq:tQsize}.
Taking logs,
\begin{equation}\label{eq:prod}
\frac{\log\prod_{4S=R}|x(\tilde{Q})-x(S)|}{h(P)}\leq -\frac{1}{2}
+\frac{33}{2}\lambda+O\left(\frac{\delta}{\log D}\right).
\end{equation}

Let $\alpha=x(S)$ be a root of $f_R$.
Apply~\cite[p.262 last line]{Mahler}, 
\[|f_R'(\alpha)|\gg |\Delta(f_R)|^{1/2}\|f_R\|_{1}^{-14},\]
where $\Delta(\cdot)$ denotes the discriminant and $\|\cdot\|_{1}$ denotes the $\ell_1$-norm.
Write $x(R)=\frac{r}{s}$, where $\gcd(r,s)=1$. 
Since $sf_R(T)\in\Z[T]$, so
$|\Delta(f_R)|\geq s^{-30}$.
Also we can check that
$\|f_R\|_{1}\ll D^{14}\max\{x(R),D^2\}$.
Therefore noting that $H(R)=r\geq Ds$,
\[
\prod_{\tilde{S}\neq S:4\tilde{S}=R}|x(\tilde{S})-x(S)|
=
|f_R'(\alpha)|
\gg
|s|^{-15}(D^{14}\max\{x(R),D^2\})^{-14}
\geq
H(R)^{-15}D^{-209}
.
\]
Take $S\in \{\tilde{S}\in \E_D(\Qbar):4\tilde{S}=R\}$ such that $|x(\tilde{Q})-x(S)|$ is minimum.
By the triangle inequality
\[|x(S)-x(\tilde{S})|\leq|x(\tilde{Q})-x(S)|+|x(\tilde{Q})-x(\tilde{S})|
\leq 2|x(\tilde{Q})-x(\tilde{S})|.
\]
Taking products
\[
\prod_{\tilde{S}\neq S:4\tilde{S}=R}
|x(\tilde{Q})-x(\tilde{S})|
\gg
\prod_{\tilde{S}\neq S:4\tilde{S}=R}|x(S)-x(\tilde{S})|
\gg
H(R)^{-15}D^{-209}
.\]
Take logs
\[\log \prod_{\tilde{S}\neq S:4\tilde{S}=R}
|x(\tilde{Q})-x(\tilde{S})|\geq
-15h(R)-209\log D+O(1).\]
Put this back to~\eqref{eq:prod},
\[
\frac{\log|x(\tilde{Q})-x(S)|}{h(P)}\leq -\frac{1}{2}
+\frac{63}{2}\lambda+209\delta+o(1)
.
\]
Applying the upper bound in~\eqref{eq:PQest} proves Lemma~\ref{lemma:Roth}.

\section{Roth's Theorem}
In this section we follow the proof of Roth's Theorem in Chapter~6 of~\cite{BG}, specialising in the bivariate case.

Let $K\subseteq E$ be number fields such that $m\coloneqq [E:K]$. Suppose $\alpha\in E$. Let $v_1$ be the infinite place of $E$ given by the embedding $\iota|_E:E\hookrightarrow \C$, where $\iota:\Qbar\hookrightarrow \C$ is as fixed in Section~\ref{sec:largept}, so we can write $|\cdot|=|\cdot|_{v_1}$.
We call $\beta\in K$ a $K$-\emph{approximation to $\alpha$ with exponent $\kappa$}, if 
\[|\beta-\alpha|<H(\beta)^{-\kappa}.\]

Approximations obey the following strong gap principle.
\begin{theorem}[strong gap principle {\cite[Theorem~6.5.4]{BG}}]\label{th:gap}
Let $\beta, \beta'\in K$ be distinct elements such that 
$|\alpha-\beta|<H(\beta)^{-\kappa}$,
$|\alpha-\beta'|<H(\beta')^{-\kappa}$ and
$h(\beta')\geq h(\beta)$. Then
\[h(\beta')\geq -2\log 2+(\kappa-1)h(\beta).\]
\end{theorem}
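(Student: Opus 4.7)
My plan for Theorem~\ref{th:gap} is a classical sandwich argument on $|\beta - \beta'|$: bound it from above using the two approximation hypotheses, bound it from below via a Liouville-type inequality coming from the product formula on $K$, and then compare the two.

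First, since $h(\beta') \geq h(\beta)$ implies $H(\beta') \geq H(\beta)$, the triangle inequality at the fixed archimedean place gives
\[|\beta - \beta'| \leq |\alpha - \beta| + |\alpha - \beta'| < H(\beta)^{-\kappa} + H(\beta')^{-\kappa} \leq 2\, H(\beta)^{-\kappa}.\]
For the lower bound, I would use that $\beta - \beta' \in K^*$. Combined with the standard submultiplicativity
\[H(\beta - \beta') \leq 2\, H(\beta)\, H(\beta'),\]
the product formula $\prod_w |\beta - \beta'|_w^{n_w} = 1$, applied at our distinguished embedding $K \hookrightarrow \C$, yields the Liouville-type bound
\[|\beta - \beta'| \geq \frac{1}{H(\beta - \beta')} \geq \frac{1}{2\, H(\beta)\, H(\beta')}.\]

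Comparing the two estimates for $|\beta - \beta'|$ gives
\[\frac{1}{2\, H(\beta)\, H(\beta')} < 2\, H(\beta)^{-\kappa},\]
which rearranges to $H(\beta') > \tfrac{1}{4}\, H(\beta)^{\kappa - 1}$; taking logarithms produces exactly $h(\beta') \geq -2\log 2 + (\kappa - 1)\, h(\beta)$. The two factors of $2$ (one from the triangle inequality, one from the submultiplicativity of $H$ at the archimedean places) combine to give precisely the additive constant $-2\log 2$ in the statement.

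The main point requiring care is the Liouville lower bound: the exact exponent on $H(\beta - \beta')$ and the constant $2$ in the submultiplicativity depend on the normalization of heights on $K$, namely whether archimedean factors are weighted by $n_w$ or by $n_w/[K:\Q]$. Under the (absolute) Weil normalization used in this paper, matching Bombieri–Gubler, one has $|\gamma| \geq H(\gamma)^{-1}$ for $\gamma \in K^*$ at the fixed embedding, and the numerical constants work out as above; I would simply cite this normalization convention rather than re-derive it, since everything else in the argument is formal.
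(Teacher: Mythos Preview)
Your proposal is correct and matches the paper's proof essentially line for line: the paper also bounds $\log|\beta-\beta'|$ above by $-\kappa h(\beta)+\log 2$ via the triangle inequality and below by $-h(\beta-\beta')\geq -h(\beta)-h(\beta')-\log 2$ via the Liouville/product-formula bound. The only cosmetic difference is that the paper writes the upper bound as $\max(|\alpha-\beta|,|\alpha-\beta'|)+\log 2$ rather than as a sum, which is equivalent here.
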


\begin{proof}
We have
\begin{multline*}
\log |\beta-\beta'|=\log|(\alpha-\beta')-(\alpha-\beta)|
\leq \max\left(\log|\alpha-\beta'|,\log |\alpha-\beta|\right)+\log 2\\
\leq -\kappa\min\left(h(\beta'),h(\beta)\right)+\log 2
= -\kappa h(\beta)+\log 2.
\end{multline*}
Also
\[
\log |\beta-\beta'|\geq -h(\beta-\beta')
\geq -h(\beta)-h(\beta')-\log 2.
\]
Combining the upper and lower bounds of $\log |\beta-\beta'|$ gives the required inequality.
\end{proof}

\begin{theorem}\label{theorem:indp}
Let $c<1$, $M\geq 72$ and $L= (\frac{h(\alpha)+\log 2}{c^{-2}-1}+4)M$.
Assume \begin{equation}\label{eq:kappaMc}
\kappa> \left(c-4\sqrt{\frac{m}{M}}\right)^{-1}\left(1+\frac{c^{-2}+1}{M}\right)\sqrt{2m}.\end{equation}
Suppose $\beta_1,\beta_2\in K$ are both approximations to $\alpha\in E$ with exponent $\kappa$.
If $h(\beta_1)\geq L$, then $h(\beta_{2})< Mh(\beta_1)$.
\end{theorem}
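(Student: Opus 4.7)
The plan is to argue by contradiction and run the classical Thue--Siegel--Roth machinery as laid out in \cite[Chapter~6]{BG}, specialized to the bivariate case. Assume $h(\beta_1)\geq L$ and $h(\beta_2)\geq Mh(\beta_1)$. Choose positive integers $d_1,d_2$ calibrated so that $d_1 h(\beta_1)$ and $d_2 h(\beta_2)$ are both close to a common large value $N$ (so $d_1/d_2\approx h(\beta_2)/h(\beta_1)\geq M$, hence $d_1\gg d_2$), and fix an index threshold $t$ just below $c/\sqrt{2m}$. The roles of the parameters in \eqref{eq:kappaMc} are: $\sqrt{2m}$ comes from the Siegel construction, $c$ measures how close to the optimal index $1/\sqrt{2m}$ we are willing to work, the term $4\sqrt{m/M}$ is the loss from Roth's lemma, and $(c^{-2}+1)/M$ collects the Taylor-error corrections.

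First, via Siegel's lemma (\cite[Lemma~6.3.4]{BG}) I would construct a nonzero polynomial $P(X_1,X_2)\in \OO_K[X_1,X_2]$ of bidegree at most $(d_1,d_2)$ vanishing at $(\alpha,\alpha)$ with index at least $t$ with respect to the weights $(d_1,d_2)$, with height satisfying $h(P)\ll (d_1+d_2)(h(\alpha)+\log 2)$. Then, Taylor-expanding $P$ around $(\alpha,\alpha)$ and using the approximation estimate $|\beta_i-\alpha|<H(\beta_i)^{-\kappa}$, I would show that for every $(i_1,i_2)$ with $i_1/d_1+i_2/d_2<t-\bigl(1+\tfrac{c^{-2}+1}{M}\bigr)/\kappa$, the derivative $\partial_1^{i_1}\partial_2^{i_2}P(\beta_1,\beta_2)$ has absolute value strictly smaller than the denominator-type lower bound forced on it by the product formula if nonzero, so it must vanish. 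The hypothesis $h(\beta_1)\geq L$, with $L=(\tfrac{h(\alpha)+\log 2}{c^{-2}-1}+4)M$, is exactly what allows the contributions from $h(P)$ and from $h(\alpha)+\log 2$ to be absorbed into the compact correction $\tfrac{c^{-2}+1}{M}$.

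Next, I would apply Roth's lemma (\cite[Theorem~6.4.5]{BG}) to $P$ at $(\beta_1,\beta_2)$. The gap hypothesis $d_2 h(\beta_2)\geq d_1 h(\beta_1)$ follows from $h(\beta_2)\geq Mh(\beta_1)$ and the choice of the ratio $d_1/d_2$, while the condition that $d_1 h(\beta_1)$ dominates $h(P)+d_1+d_2$ is guaranteed by the additive $4M$ in $L$. Roth's lemma then bounds the index of $P$ at $(\beta_1,\beta_2)$ above by $4\sqrt{m/M}$. Combining with the previous step forces
\[
t-\bigl(1+\tfrac{c^{-2}+1}{M}\bigr)/\kappa \leq 4\sqrt{m/M};
\]
substituting the lower bound on $t$ (slightly below $1/\sqrt{2m}$, with the slack encoded by $c$) and solving for $\kappa$ produces exactly the inequality that contradicts \eqref{eq:kappaMc}.

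The main obstacle is the careful bookkeeping of error terms in the Taylor step: the correction receives contributions from $h(P)$, from $(d_1+d_2)(h(\alpha)+\log 2)$, and from the discrepancy between each $d_i h(\beta_i)$ and the common calibration value $N$. The precise shape of $L$ --- linear in $M$, with coefficient $\tfrac{1}{c^{-2}-1}$ on $h(\alpha)+\log 2$ and additive constant $4$ --- is arranged so that all these corrections collapse cleanly into the single factor $\tfrac{c^{-2}+1}{M}$ that appears in the hypothesis on $\kappa$, leaving no slack to spare.
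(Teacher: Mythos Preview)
Your proposal is correct and follows the same approach as the paper---the bivariate Roth machinery of \cite[Chapter~6]{BG}---with only the cosmetic difference that you apply the Taylor/product-formula step before Roth's lemma, whereas the paper first uses Roth's lemma (cited there as \cite[Lemma~6.3.7]{BG}) to isolate a single nonvanishing derivative $G=\partial_{\boldsymbol\mu}F$ and then Taylor-expands $G$; the two orderings are contrapositives of one another. One minor bookkeeping slip to fix when you write out the details: Roth's lemma itself yields the index bound $4\sigma$ with $\sigma=\sqrt{2/M}$, independent of $m$; the factor $4\sqrt{m/M}$ appearing in \eqref{eq:kappaMc} only emerges after combining with the Siegel threshold $t=c\sqrt{2/m}$ and factoring out $\sqrt{2m}$.
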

We prove Theorem~\ref{theorem:indp} by contradiction. Suppose we can find $\beta_1,\beta_2$ under the assumptions in Theorem~\ref{theorem:indp}, and such that $h(\beta_1)\geq L$ and $h(\beta_{2})\geq Mh(\beta_1)$.
Let $\sigma\coloneqq \sqrt{\frac{2}{M}}$ and $t\coloneqq c\sqrt{\frac{2}{m}}$.

\subsection{The auxiliary polynomial}
Take $N$ large.
Choose
\[d_j=\left\lfloor \frac{N}{h(\beta_j)}\right\rfloor
\text{ for } j=1,2.\]
Let $t<1$, $\balpha=(\alpha,\alpha)\in E^2$ and $\bbeta=(\beta_1,\beta_2)\in K^2$.
Let
\[V_2(t)\coloneqq \vol\left(\left\{(x_1,x_2):x_1+x_2\leq t,\ 0\leq x_j\leq 1\right\}\right)=\frac{1}{2}t^2.\]
For a polynomial $F(x_1,x_2)=\sum_{\mathbf{j}} a_{\mathbf{j}}\mathbf{x}^{\mathbf{j}}\in\Qbar[x_1,x_2]$, define $|F|_v=\max_{\mathbf{j}}|a_{\mathbf{j}}|_v$, $H(F)\coloneqq \prod_{v}|F|_v$ and $h(F)=\log H(F)$.

We apply the following lemma to construct an auxiliary polynomial.
\begin{lemma}[{\cite[Lemma~6.3.4]{BG}}]
Suppose $mV_2(t)<1$. Then for all sufficiently large $d_1, d_2\in\Z$, there exist $F\in K[x_1,x_2]$, $F\not\equiv 0$, with partial degrees at most $d_1,d_2$ such that
\[\ind(F;\mathbf{d},\balpha)\coloneqq \min_{\boldsymbol \mu}\left\{\frac{\mu_1}{d_1}+\frac{\mu_2}{d_2}:\partial_{\mu}F(\balpha)\neq 0\right\}\geq t;\]
and
\[h(F)\leq\frac{mV_2(t)}{1-mV_2(t)}\sum_{j=1}^2 (h(\alpha_j)+\log 2+o(1))d_j,\]
as $d_j\rightarrow \infty$.
\end{lemma}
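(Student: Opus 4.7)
The plan is the classical Thue--Siegel construction: set up the polynomial $F$ with indeterminate coefficients, interpret the required vanishing of its twisted partial derivatives at $\balpha$ as a homogeneous linear system over $K$, and apply Siegel's lemma in its number-field form to produce a small-height non-trivial solution.

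First I would write $F(x_1,x_2)=\sum_{0\leq j_k\leq d_k}a_{\mathbf{j}}x_1^{j_1}x_2^{j_2}$ and treat the $a_{\mathbf{j}}\in K$ as $N:=(d_1+1)(d_2+1)$ unknowns. For each multi-index $\boldsymbol\mu=(\mu_1,\mu_2)$ with $0\leq\mu_k\leq d_k$ and $\mu_1/d_1+\mu_2/d_2<t$, the equation $\partial_{\boldsymbol\mu}F(\balpha)=0$ is a single linear relation over $E$ in the $a_{\mathbf{j}}$. Fixing a $K$-basis of $E$ of cardinality $m$ converts each such relation into $m$ linear relations over $K$. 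The number of $\boldsymbol\mu$ in the scaled simplex $\{\mu_1/d_1+\mu_2/d_2<t,\ 0\leq \mu_k\leq d_k\}$ is $V_2(t)d_1d_2+o(d_1d_2)$ by a routine lattice-point count, so the total number of $K$-constraints is $M=mV_2(t)d_1d_2+o(d_1d_2)$. The hypothesis $mV_2(t)<1$ then forces $N>M$ for all sufficiently large $d_1,d_2$, so a non-trivial solution exists.

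For the height bound I would read off the coefficient matrix: the coefficient of $a_{\mathbf{j}}$ in the $\boldsymbol\mu$-th equation is $\binom{j_1}{\mu_1}\binom{j_2}{\mu_2}\alpha^{j_1+j_2-\mu_1-\mu_2}$, whose Weil height is bounded uniformly by $(h(\alpha)+\log 2)(d_1+d_2)+o(d_1+d_2)$ after estimating each binomial by $2^{d_j}$ and appealing to the product formula for powers of $\alpha$. Plugging this into the Bombieri--Vaaler form of Siegel's lemma over $K$ (as in \cite[\S 2.9]{BG}), a homogeneous system with $N$ unknowns, $M<N$ equations, and coefficient matrix of height $\mathcal H$ admits a non-zero solution of height at most $\frac{M}{N-M}h(\mathcal H)$ up to a negligible error term. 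With our parameters this yields
\[h(F)\leq \frac{mV_2(t)}{1-mV_2(t)}\,(h(\alpha)+\log 2)(d_1+d_2)+o(d_1+d_2),\]
which matches the claimed inequality after identifying $\alpha_1=\alpha_2=\alpha$ in the bivariate specialisation.

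The hard part will be the careful application of Siegel's lemma over the number field $K$ rather than $\Q$: one must clear denominators at all places simultaneously to obtain an algebraic-integer coefficient matrix, control the contributions to $h(F)$ at archimedean and non-archimedean places uniformly, and verify that the factor $m=[E:K]$ lands precisely in the numerator of the ratio $mV_2(t)/(1-mV_2(t))$ rather than attaching itself to the $d_j$ in the final bound. A minor secondary point is that the lattice-point count in the scaled simplex approaches $V_2(t)d_1d_2$ only asymptotically, which is what produces the $o(1)$ absorbed into $(h(\alpha_j)+\log 2+o(1))d_j$; this is harmless. Everything else --- the explicit expansion of $\partial_{\boldsymbol\mu}F(\balpha)$, the binomial bounds, and the monomial height estimates --- is routine and standard.
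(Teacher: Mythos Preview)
The paper does not prove this lemma at all: it is quoted verbatim as \cite[Lemma~6.3.4]{BG} and simply invoked. Your sketch is the standard Thue--Siegel/Siegel's lemma argument that Bombieri--Gubler themselves give for this result, so it is correct and matches the source the paper cites.
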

Since $\frac{1}{2}mt^2<1$, we have
\[\frac{mV_2(t)}{1-mV_2(t)}= \frac{mt^2}{2-mt^2}
=\frac{1}{2m^{-1}t^{-2}-1}.\]
Take $C_1\coloneqq \frac{h(\alpha)+\log 2}{c^{-2}-1}$, so $L=(C_1+4)M$.
Then we can obtain a non-trivial polynomial $F\in K[x_1,x_2]$ with partial degrees at most $d_1,d_2$ such that 
\begin{equation}\label{eq:R1}
\ind(F;\mathbf{d},\balpha)\geq t\quad\text{ and }\quad
h(F)<\frac{2C_1N}{L}.
\end{equation}

\subsection{Non-vanishing at the rational point}
Next we apply Roth's lemma to construct a suitable derivative of $F$ that does not vanish at $\beta$.
\begin{lemma}[Roth's lemma {\cite[Lemma~6.3.7]{BG}}]
Let $F\in\Qbar[x_1,x_2]$ with partial degrees at most $ d_1,d_2$ and $F\not\equiv 0$.
Let $(\xi_1,\xi_2)\in\Qbar^2$ and $0<\sigma^2\leq \frac{1}{2}$.
Suppose that 
$d_{2}\leq \sigma^2 d_1$ and $\min_j d_jh(\xi_j)\geq \sigma^{-2}(h(F)+8d_1)$.
Then 
$\ind(F;\mathbf{d},\boldsymbol\xi)\leq 4\sigma$.
\end{lemma}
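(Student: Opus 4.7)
The plan is to prove this bivariate Roth's lemma via the classical Wronskian-determinant method. First I would write $F$ in its minimal product decomposition $F(x_1,x_2)=\sum_{k=0}^{l}\phi_k(x_1)\psi_k(x_2)$, chosen so that $l$ is minimal. Minimality forces both $\{\phi_k\}\subset K[x_1]$ and $\{\psi_k\}\subset K[x_2]$ to be $K$-linearly independent, and in particular $l\le d_2$. (This is where the asymmetry $d_2\le\sigma^2 d_1$ eventually pays off, since $l+1$ will control our loss factors.) I would then form the generalised Wronskian
\[U(x_1,x_2):=\det\!\left(\tfrac{1}{i!\,j!}\partial_{x_1}^{i}\partial_{x_2}^{j}F\right)_{\!0\le i,j\le l}.\]
Since $\tfrac{1}{i!\,j!}\partial_{x_1}^{i}\partial_{x_2}^{j}F=\sum_k(\phi_k^{(i)}(x_1)/i!)(\psi_k^{(j)}(x_2)/j!)$, the matrix factors as $A(x_1)B(x_2)$, and hence $U(x_1,x_2)=W_1(x_1)W_2(x_2)$, where $W_1,W_2$ are the ordinary Wronskians of the two linearly independent families. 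Linear independence then gives $U\not\equiv 0$.

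Next I would translate the assumed index $\theta:=\ind(F;\mathbf d,\boldsymbol\xi)$ into an order-of-vanishing bound for $W_1(x_1)W_2(x_2)$ at $(\xi_1,\xi_2)$. Each entry $M_{ij}$ of the matrix vanishes at $(\xi_1,\xi_2)$ as soon as $i/d_1+j/d_2<\theta$, and a Leibniz expansion of $\partial^{a,b}U$ (expressing it as a sum of determinants of shifted derivative matrices, and using that the row and column indices run over all of $\{0,\dots,l\}$) yields
\[\frac{\ord_{\xi_1}W_1}{d_1}+\frac{\ord_{\xi_2}W_2}{d_2}\ge (l+1)\theta-\frac{l(l+1)}{2d_1}-\frac{l(l+1)}{2d_2}.\]
Routine estimates then bound $\deg W_j\leq (l+1)d_j$ and $h(W_j)\ll (l+1)\bigl(h(F)+d_1+d_2\bigr)$ (Gauss' lemma plus the determinant expansion). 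For any nonzero $g\in\Qbar[x]$ of degree $d$, the fundamental height inequality applied to the factorisation $g=\mathrm{lc}(g)\prod(x-\alpha_i)$ gives $r\cdot h(\xi)\leq h(g)+d\log 2$ whenever $\xi$ is a root of multiplicity $r$. Applying this to $W_1$ at $\xi_1$ and to $W_2$ at $\xi_2$, substituting into the displayed inequality, and using the hypotheses $l\le d_2\le\sigma^2 d_1$ and $\min_j d_j h(\xi_j)\ge \sigma^{-2}(h(F)+8d_1)$, rearrangement yields $\theta\le 4\sigma$.

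The principal obstacle is the combinatorial index translation in the middle step: converting the weighted vanishing of entries of the derivative matrix into a vanishing order for the product $W_1(x_1)W_2(x_2)$ requires careful tracking of how shifted partial derivatives interact in the Leibniz expansion, and the bookkeeping must be tight enough to produce the clean $(l+1)\theta$ leading term. The factor $4$ in the target bound $4\sigma$ reflects two unavoidable square-root losses: one from converting the Wronskian vanishing order into an index-style quantity through $l\le d_2\le\sigma^2 d_1$, and one from balancing the two-variable contribution against the hypothesis on $\min_j d_j h(\xi_j)$ when optimising.
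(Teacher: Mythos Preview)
The paper does not prove this lemma at all: it is quoted verbatim as \cite[Lemma~6.3.7]{BG} and used as a black box, with the very next line simply checking that the hypotheses are satisfied in the situation at hand. So there is no ``paper's own proof'' to compare against.

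That said, your outline is the standard Wronskian-determinant argument that underlies the proof in Bombieri--Gubler, and the structure is correct: minimal bilinear decomposition, factoring the generalised Wronskian as $W_1(x_1)W_2(x_2)$, translating the index hypothesis into a lower bound on $\ord_{\xi_1}W_1/d_1+\ord_{\xi_2}W_2/d_2$, and then closing with the one-variable height inequality $r\,h(\xi)\le h(g)+d\log 2$ for a root of multiplicity $r$. One small point to watch: the bound is $l+1\le d_2+1$ (there are at most $d_2+1$ linearly independent polynomials in $x_2$ of degree $\le d_2$), so $l\le d_2$ as you wrote, but the $+1$ resurfaces in the determinant size and you should track it through the constants. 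You correctly flag the Leibniz bookkeeping as the delicate part; in a full write-up that step needs an explicit combinatorial inequality (summing the index losses over the $(l{+}1)!$ permutation terms), not just a gesture toward it.
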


Since $L\geq 2\sigma^{-2}(C_1+4)$ and $M\geq 2\sigma^{-2}$, we can apply the lemma to get 
$\ind(F;\mathbf{d},\bbeta)\leq 4\sigma$.
Now we can take $\boldsymbol \mu$ such that $\partial_{\boldsymbol \mu}F(\bbeta)\neq 0$ and $\frac{\mu_1}{d_1}+\frac{\mu_2}{d_2}=\ind (F;\mathbf{d},\bbeta)$.
Let $G=\partial_{\boldsymbol \mu}F$.
Since $\ind(G;\mathbf{d},\balpha)\geq \ind(F;\mathbf{d},\balpha)-\frac{\mu_1}{d_1}-\frac{\mu_2}{d_2}$ by~\cite[6.3.2(c)]{BG}, we deduce from~\eqref{eq:R1} that
\begin{equation}\label{eq:indG}
\ind(G;\mathbf{d},\balpha)\geq t-4\sigma,\qquad
G(\beta)\neq 0,\quad\text{and }\quad
h(G)\leq \frac{4C_1N}{L}.\end{equation}

\subsection{The upper bound}
For places $v\neq v_1$, we have
\begin{equation}\label{eq:prime}
\log |G(\bbeta)|_v\leq \log |G|_v+\sum_{j=1}^{2}d_j(\log^+|\beta_j|_v+\varepsilon_v o(1)),
\end{equation}
where $o(1)\rightarrow 0$ as $d_j\rightarrow\infty$, and
\[\varepsilon_v=\begin{cases}
\frac{[K_v:\Q_v]}{[K:\Q]}&\text{ if $v$ is archimedean}\\
0&\text{ if $v$ is non-archimedean}.\end{cases}
\]
For the place $v_1$, expand $G$ in Taylor series with center $\balpha$
\begin{equation}\label{eq:taylor}
G(\bbeta)=\sum_{\mathbf{k}} \partial_{\mathbf{k}}G(\balpha)(\beta_1-\alpha)^{k_1} (\beta_2-\alpha)^{k_2}.\end{equation}
We have from~\eqref{eq:indG}, that
\[\partial_{\mathbf{k}}G(\balpha)=0\quad\text{ if }\quad\frac{k_1}{d_1}+\frac{k_2}{d_2}<t-4\sigma,\]
and
\[\log|\partial_{\mathbf{k}}G(\balpha)|_{v_1}\leq \log |G|_{v_1}+\sum_{j=1}^2(d_j-k_j)\log^+|\alpha|_{v_1}+\varepsilon_{v_1}(\log 2+o(1))d_j,\]
so putting back to~\eqref{eq:taylor} and taking absolute values and logs,
\begin{equation}\label{eq:inftyel}\begin{split}
\log|G(\bbeta)|_{v_1}
&\leq \max_{\mathbf{k}}\log\left| \partial_{\mathbf{k}}G(\balpha) \prod_{j=1}^2(\beta_j-\alpha)^{k_j}
\right|_{v_1}
+\varepsilon_{v_1}\sum_{j=1}^2\log(d_j+1)\\
&\leq 
-\min_{\frac{k_1}{d_1}+\frac{k_2}{d_2}\geq t-4\sigma}
\left(\sum_{j=1}^2 k_j \log^+\frac{1}{|\beta_j-\alpha|_{v_1}}\right)
 +\log |G|_{v_1}\\
&\qquad \qquad\qquad\qquad
+\sum_{j=1}^2\left(\log^+|\beta_j|_{v_1}+\log^+|\alpha|_{v_1}+\varepsilon_{v_1}(\log 2 +o(1))\right)d_j.
\end{split}
\end{equation}

We now suppress the subscript $v_1$, as $|\cdot|$ is defined to be $|\cdot|_{v_1}$.
Adding up the bounds~\eqref{eq:prime} and~\eqref{eq:inftyel} for all places $v$, and noting that $\sum_v\varepsilon_v=1$, we have
\[\begin{split}
\sum_{v} \log |G(\bbeta)|_v
& \leq 
-\min_{\frac{k_1}{d_1}+\frac{k_2}{d_2}\geq t-4\sigma}
\left(\sum_{j=1}^2 k_j \log^+\frac{1}{|\beta_j-\alpha|}\right)
 +h(G)\\
&\qquad \qquad\qquad\qquad
+\sum_{j=1}^2
\left(h(\beta_j)+\log^+|\alpha|+2\log 2 +o(1)\right)d_j
\\ 
& \leq 
-\min_{\frac{k_1}{d_1}+\frac{k_2}{d_2}\geq t-4\sigma}
\left(\sum_{j=1}^2 k_j \log^+\frac{1}{|\beta_j-\alpha|}\right)
+\left(2+\frac{C_2}{L}\right)N+o(N)
,
\end{split}\]
where $C_2=4C_1+4\log 2+2\log^+|\alpha|$.

From the assumption that $\beta_1$ and $\beta_2$ are both $K$-approximation to $\alpha$ with exponent $\kappa$, we see that
\[\kappa h(\beta_j)\leq\log^+\frac{1}{|\beta_j-\alpha|},\]
we have
\[\sum_{j=1}^2 k_j \log^+\frac{1}{|\beta_j-\alpha|}
\geq
\kappa
\sum_{j=1}^2 \left(h(\beta_j)d_j\right)\frac{k_j}{d_j} 
\sim N\kappa
\left(\frac{k_1}{d_1}+\frac{k_2}{d_2}\right)
.\]
This gives us the upper bound
\begin{equation}\label{eq:ublogG}
\sum_{v} \log |G(\bbeta)|_v\leq -\kappa\left(t-4\sigma\right)N+\left(2+\frac{C_2}{L}\right)N+o(N).
\end{equation}

\subsection{Obtaining the bound}
Since $G(\bbeta)\neq 0$, we have 
$\sum_{v}\log |G(\bbeta)|_v=0$. Put this into~\eqref{eq:ublogG} and let $N\rightarrow\infty$, we get
\[-\kappa\left(\frac{t}{2}-2\sigma\right)+1+\frac{C_2}{2L}\geq 0.\]
Since by assumption $\sigma<\frac{1}{6}$, we have
\[\kappa\leq \left(\frac{t}{2}-2\sigma\right)^{-1}\left(1+\frac{C_2}{2L}\right),\]
which contradicts~\eqref{eq:kappaMc}.
The completes the proof of Theorem~\ref{theorem:indp}.

\section{Bounding the number of points}\label{sec:bound}
In this section we prove the explicit upper bound of $\#\mathcal{L}_D(R)$ given in Theorem~\ref{case:positive2}, when $x(R)>D$ and $R\notin \T_D+2\E_D(\Q)$.
Take $R$ to be the point with minimum canonical height in the coset $R+2\E_D(\Q)$.
Let $\epsilon=0.00153$, which satisfies the assumption in Lemma~\ref{lemma:smallptbd}.

For each $\tilde{Q}\in \frac{1}{2}\E_D(\Q)$, define
$L_{\tilde{Q}}\coloneqq  (\frac{h(S)+\log 2}{c^{-2}-1}+4)M$ as in Theorem~\ref{theorem:indp}, where $S$ is chosen in $\frac{1}{4}R$ such that $|x(S)-x(\tilde{Q})|$ is minimum, with absolute constants $M$ and $c$ to be specified later.
We bound the number of medium points
\[\A_1\coloneqq \Big\{P\in \mathcal{L}_D(R): h(\tilde{Q})<L_{\tilde{Q}}\text{ for some }\tilde{Q}\in \frac{1}{4}(P-R)\Big\}\]
and large points
\[\A_2\coloneqq \Big\{P\in \mathcal{L}_D(R): h(\tilde{Q})\geq L_{\tilde{Q}}\text{ for all }\tilde{Q}\in \frac{1}{4}(P-R) \Big\}.\]
 For each $S\in \frac{1}{4}R$, define
\[\B_2(S)\coloneqq \Big\{\tilde{Q}\in \frac{1}{2}\E_D(\Q):4\tilde{Q}+R\in \A_2,\ 
|x(S)-x(\tilde{Q})| \text{ minimum over }S\in \frac{1}{4}R\Big\}.\]

\subsection{Medium points}
Let $P_1, P_2,\dots, P_{s}$ be points in $\A_1$ with strictly increasing height.
From \eqref{eq:repulsion}, we deduce that
\[h(P_{i+1})\geq 3h(P_i)-4\log D+O(1)\]
for every $i = 1,\dots, s-1$. 
Iterating this,
\[h(P_{i+1})\geq 3^{i}h(P_1)-(1+3+\dots+3^{i-1})(4\log D+O(1))
=3^{i}h(P_1)-2(3^{i}-1)(\log D+O(1)).\]
Therefore it follows from the assumptions $x(P_1)>D^{2(1+\epsilon)}$ that 
\begin{equation}\label{eq:medgap}
h(P_{s})>\left(1-\frac{2\log D+O(1)}{h(P_1)}\right)
3^{s-1}h(P_{1})
>\left(\frac{\epsilon}{1+\epsilon}-o(1)\right)
3^{s-1}h(P_{1})
.
\end{equation}
Take 
\begin{equation}\label{eq:lamdel}
\lambda>\frac{1+\epsilon}{3^{s-1} \epsilon}\quad \text{ and }\quad \delta>\frac{1}{2\cdot 3^{s-1} \epsilon },
\end{equation}
so that 
$\lambda h(P_s)>h(P_1)>h(R)$
and
$\delta h(P_s)>\frac{1}{2(1+\epsilon)}h(P_1)>\log D$.

For each $S\in \frac{1}{4}R$, since $16\h(S)=\h(R)\leq \h(P_1)$, we have by applying~\eqref{eq:heightnf} to $\h(S)$ and \eqref{eq:height} to $\h(P_1)$, 
\[h(S)<\frac{1}{16}h(P_1)+\log D-o(1).\]
Writing $P_s=4\tilde{Q}_s+R$ and using the lower bound in~\eqref{eq:PQest},
\begin{multline*}
\frac{1}{16}
h(P_s)\left((\sqrt{1-\delta}-\sqrt{\lambda})^2-16\delta
-o(1)\right)
\leq h(\tilde{Q}_s)\\
<L_{\tilde{Q}_s}
< \left(\frac{\frac{1}{16}h(P_1)+\log D+\log 2+o(1)}{c^{-2}-1}+5\right)M
.
\end{multline*}
Now apply~\eqref{eq:medgap} and divide both sides by $\frac{1}{16}h(P_1)$,
\[
\left(\frac{\epsilon}{1+\epsilon}-o(1)\right)
3^{s-1}\left((\sqrt{1-\delta}-\sqrt{\lambda})^2-16\delta
+o(1)\right)
< \left(1+\frac{8}{1+\epsilon}+o(1)\right)\frac{M}{c^{-2}-1}
.
\]
Simplifying we have
\begin{equation}\label{eq:medexp}
3^{s-1}
< \left(1+\frac{9}{\epsilon}\right)\frac{M}{(c^{-2}-1)\left((\sqrt{1-\delta}-\sqrt{\lambda})^2-16\delta
\right)}+o(1)
.
\end{equation}
Let $s_0$ be the maximum integer satisfying~\eqref{eq:medexp}, then taking $s=s_0+1$ would be a contradiction as long as \eqref{eq:lamdel} is satisfied. Therefore $\#\A_1\leq 2s_0$, where the factor of $2$ comes from the possible existence of $-P_1,\dots, -P_{s_0}$ .

\subsection{Large points}
Now fix some $S\in \frac{1}{4}R$ and consider the set $\B_2(S)$.
Let $K$ be the minimal number field containing the $x$-coordinates of all points in $\frac{1}{2}\E_D(\Q)$, and let $E$ be the field $K(x(S))$.

Suppose $\tilde{Q}\in \B_2(S)$.
Fix $\lambda=0.000137$ and $\delta=0.0000684$ so that \eqref{eq:lamdel} holds with $s=15$, and take $\kappa=7.516$ , which satisfies 
\[
\kappa<8\cdot \frac{1
-63\lambda-418\delta}{(1+\sqrt{\lambda})^2+16\delta}+o(1),
\]
then Lemma~\ref{lemma:Roth} implies that $x(\tilde{Q})$ is a $K$-approximation to $x(S)$ with exponent $\kappa$. 
Now we can apply Theorem~\ref{theorem:indp} with $m=[E:K]\leq 4$, $M=276.1$ and $c=0.861$, noting that~\eqref{eq:kappaMc} is satisfied.
Take $\beta_1=x(\tilde{Q})$ such that $h(\tilde{Q})$ is minimum over all $\tilde{Q}\in \B_2(S)$. Then Theorem~\ref{theorem:indp} shows that all points in $\B_2(S)$ must have height in the interval $[h(\beta_1),Mh(\beta_1)]$. By Theorem~\ref{th:gap}, if $t$ is the smallest integer such that
\[
(\kappa-1)^{t}>M,
\]
then $\#\B_2(S)\leq t$. This is achieved by $t=3$. 
There are $16$ choices of $S$, but since $\E_D(\Qbar)[4]\subseteq \frac{1}{2}\E_D(\Q)$, if $x(\Q)$ is a $K$-approximation to $x(S)$, then $x(\Q+T)$ is also a $K$-approximation to $x(S+T)$ for any $T\in \E_D(\Qbar)[4]$.
Therefore $\#\A_2\leq 3$.

Returning to the medium points with our choice of constants, we have $\#\A_1\leq 28$.
If $P\in \E_D(\Z)$ then $-P\in \E_D(\Z)$, so $\#(\A_1\cup\A_2)\leq 30$.

\section{Integral points in other cosets of $2\E_D(\Q)$}
We now prove the upper bounds in Theorem~\ref{thm:elliptic}.

\subsection{Cosets with respect to a non-torsion point}
Here we will treat the case in Theorem~\ref{thm:elliptic}~\eqref{case:negative}, assuming $R\notin \T_D+2\E_D(\Q)$.
Suppose $x(R)<0$.
If $P\in\mathcal{Z}_D(R)$, then $-D<x(P)<0$ and so $\h(P)<\log D+\frac{2}{3}\log 2$ by~\eqref{eq:heightn}. Following the argument in Section~\ref{sec:sphere}, we obtain an upper bound of $A(r+1,\theta)$ where $\sin\frac{\theta}{2}=\frac{1}{2}\sqrt{\frac{\log D-2\log 2}{\log D+\frac{2}{3}\log 2}}=\frac{1}{2}-o(1)$. 
For $D\geq 97353$, applying the following estimate by Rankin gives us an upper bound of $3$. Since non-torsion integral points in comes in pairs of $\pm P$, we can reduce the upper bound to $2$ if $R\notin \T_D+2\E_D(\Q)$.
\begin{theorem}[{\cite[Lemma~2]{Rankincaps}}]
If $\frac{\pi}{4}<\theta<\frac{\pi}{2}$, then
\[
A(r,\theta)
\leq 
\frac{2\sin^2\theta}{2\sin^2\theta-1}
.\]
\end{theorem}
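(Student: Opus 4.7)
The plan is to reduce the bound to a classical one-line inequality for unit vectors with negative pairwise inner products. Rewriting $2\sin^2\theta=1-\cos 2\theta$, the stated inequality becomes $A(r,\theta)\leq 1-1/\cos 2\theta$, and the hypothesis $\pi/4<\theta<\pi/2$ guarantees $\cos 2\theta<0$, so the right-hand side is positive and finite. The engine is the elementary bound: if $u_1,\dots,u_N$ are unit vectors in a Euclidean space with $\langle u_i,u_j\rangle\leq c<0$ for all $i\neq j$, then $N\leq 1-1/c$. This follows at once from
\[
0\leq \Bigl\|\sum_{i=1}^N u_i\Bigr\|^2 = N+\sum_{i\neq j}\langle u_i,u_j\rangle \leq N+N(N-1)c,
\]
which rearranges to $(N-1)c\geq -1$.

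The task is therefore to manufacture, from the given $\theta$-code $v_1,\dots,v_N\in\R^r$ satisfying $\langle v_i,v_j\rangle\leq\cos\theta$, a new family of unit vectors whose pairwise inner products are bounded by $\cos 2\theta=2\cos^2\theta-1$. My first attempt is the ``squaring'' construction via rank-one projections: lift $v_i$ to $P_i=v_iv_i^T-\frac{1}{r}I$ inside the space of symmetric matrices equipped with the Frobenius inner product. A direct calculation gives $\|P_i\|_F^2=1-\frac{1}{r}$ and $\langle P_i,P_j\rangle_F=\langle v_i,v_j\rangle^2-\frac{1}{r}$, so the normalised vectors $\hat P_i:=P_i/\|P_i\|_F$ are unit vectors whose pairwise inner products equal $(\langle v_i,v_j\rangle^2-1/r)/(1-1/r)$; relating this quantity to $\cos 2\theta$ via $\langle v_i,v_j\rangle^2\leq\cos^2\theta$ and applying the elementary bound would close the argument.

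The main obstacle is precisely the step $\langle v_i,v_j\rangle^2\leq\cos^2\theta$: this demands the two-sided bound $|\langle v_i,v_j\rangle|\leq\cos\theta$, whereas the hypothesis only provides the one-sided $\langle v_i,v_j\rangle\leq\cos\theta$, allowing a near-antipodal pair with $\langle v_i,v_j\rangle\approx -1$ and hence $\langle v_i,v_j\rangle^2\approx 1$. Resolving this is where the real content of the lemma sits; I would either isolate the handful of near-antipodal pairs (a pair close to antipodal forces both points to be far from every third code point, which severely limits how many such pairs can coexist) or replace the rank-one lift by a more symmetric projective construction along the lines of Rankin's original geometric argument in \cite{Rankincaps}, after which the remaining bookkeeping and the reduction to the one-line inequality above are routine.
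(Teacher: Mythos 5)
The paper gives no proof of this statement --- it is cited verbatim from Rankin --- so there is no internal proof to compare against. But there is a notational subtlety that changes the assessment of your attempt entirely. In Rankin's original lemma the parameter $\theta$ is the \emph{angular radius of the packed caps}, not the minimum angle between code points; the minimum angle between centres is therefore $2\theta$, which lies in $(\pi/2,\pi)$ when $\theta\in(\pi/4,\pi/2)$. With that reading the hypothesis already forces $\langle v_i,v_j\rangle\leq\cos 2\theta<0$ for all $i\neq j$, and the bound $\frac{2\sin^2\theta}{2\sin^2\theta-1}=1-\sec 2\theta$ is then exactly what your ``classical one-line inequality'' $N\leq 1-1/c$ with $c=\cos 2\theta$ produces \emph{directly}. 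You have in fact correctly identified the entire content of Rankin's Lemma~2; the rank-one lifting you build on top of it is not part of the argument and is unnecessary. (As literally worded, with $\theta$ the minimum angle between code points and $\theta<\pi/2$, the inequality is false: the $2r$ vectors $\pm e_1,\dots,\pm e_r$ have pairwise angles at least $\pi/2>\theta$, so $A(r,\theta)\geq 2r$, which exceeds the right-hand side once $r\geq 2$.)

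The squaring construction itself does not close the gap even if one fixes the sign issue you flag. After normalising, the lifted inner products are $\frac{\langle v_i,v_j\rangle^2-1/r}{1-1/r}$, and a short computation shows this exceeds $\cos 2\theta$ as soon as $r\geq 3$; worse, it is nonnegative whenever $|\langle v_i,v_j\rangle|\geq r^{-1/2}$, so for $r\geq\sec^2\theta$ the elementary bound does not apply at all (it needs a negative upper bound on the inner products). So ``the remaining bookkeeping and the reduction to the one-line inequality are routine'' is not right: the lifted code simply does not satisfy the hypothesis of that inequality in general dimension. The correct resolution is not to repair the lift but to recognise that under Rankin's convention no lift is needed, and the one-line inequality you wrote is the whole proof.
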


Checking all the integral points in the range $-D<x(P)<0$ on $\E_D$ for each $D< 97353$, we see that the only exceptions are those listed in Theorem~\ref{thm:elliptic}~\eqref{case:negative}.

\subsection{Integral points in $2\E_D(\Q)+\T_D$}
We now prove cases~\eqref{case:infty},~\eqref{case:ntors} and~\eqref{case:ptors} in Theorem~\ref{thm:elliptic}.
We first show that if a rational point has a multiple which is an integral point, then the original point must also be integral.

\begin{lemma}\label{lem:multint}
Suppose $P\in \E_D(\Q)$. 
If $mP\in \E_D(\Z)$ for some integer $m\geq 2$, then $P\in \E_D(\Z)$.
\end{lemma}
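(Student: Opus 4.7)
The plan is to use a $p$-adic argument via the formal group of $\E_D$, which lets us show that non-integrality at a prime is preserved under multiplication by $m$.

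Write $P=(r/s^2,t/s^3)$ in reduced form, i.e.\ $s\geq 1$ and $\gcd(r,s)=\gcd(t,s)=1$; one needs to show $s=1$, so suppose for contradiction that some prime $p$ divides $s$. The key ingredient is the standard fact that for every prime $p$, the set
\[
E_1(\Q_p):=\{\OO\}\cup\{P'\in \E_D(\Q_p):v_p(x(P'))<0\}
\]
is a subgroup of $\E_D(\Q_p)$. This follows from the formal-group construction applied to the integral Weierstrass model $y^2=x^3-D^2x$: with local parameter $z=-x/y$ at $\OO$, the formal group law has coefficients in $\Z$, so addition preserves the $p$-adic filtration by $v_p(z)$, and for a point with reduced-form denominator $s'$ we have $v_p(z)=v_p(s')$. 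This is intrinsic to the model and so holds at every prime, irrespective of the reduction type.

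Given this, our assumption $p\mid s$ puts $P$ in $E_1(\Q_p)$, and hence $mP\in E_1(\Q_p)$. There are two cases. Either $mP=\OO$, in which case $P$ is a torsion point of $\E_D(\Q)$; but the torsion subgroup is $\T_D=\{\OO,(0,0),(\pm D,0)\}$, every element of which is integral, contradicting $s>1$. Or $mP\neq\OO$, in which case $v_p(x(mP))<0$ and $mP$ is not integral, contradicting the hypothesis $mP\in \E_D(\Z)$. Either way we get a contradiction, so $s=1$ and $P\in \E_D(\Z)$.

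The only slightly delicate point is justifying that $E_1(\Q_p)$ is a subgroup at the bad primes $p=2$ and $p\mid D$, where the reduction $\tilde\E_D(\F_p)$ is singular and the naive kernel-of-reduction picture breaks down. This is the step I expect to need a careful citation rather than invoking ``good reduction''. The cleanest justification is that the formal group law depends only on the integrality of the Weierstrass coefficients, not on smoothness of the reduction; alternatively, one can verify the closure property directly from the $\E_D$ addition formula by checking that if $v_p(x(P_1)),v_p(x(P_2))<0$ then $v_p(x(P_1+P_2))<0$, which amounts to an elementary calculation with the slope $(y_1-y_2)/(x_1-x_2)$ and noting that the resulting $x$-coordinate has strictly negative $p$-adic valuation.
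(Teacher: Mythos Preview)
Your argument is correct and is the standard $p$-adic/formal-group proof that non-integrality at a prime propagates to multiples. The paper takes a different, more hands-on route: it writes $x(mP)=\phi_m(x(P))/\psi_m(x(P))^2$ via the division polynomials, observes that $\phi_m(x)$ and $\psi_m(x)^2$ are integer polynomials with leading terms $x^{m^2}$ and $m^2x^{m^2-1}$, substitutes $x(P)=u/v$ with $\gcd(u,v)=1$, and clears denominators to obtain
\[
x(mP)=\frac{u^{m^2}+vF(u,v)}{v\bigl(m^2u^{m^2-1}+vG(u,v)\bigr)}
\]
for some $F,G\in\Z[x,y]$; since the numerator is $\equiv u^{m^2}\pmod{p}$ for any $p\mid v$, the fraction cannot be an integer unless $v=1$. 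The two arguments encode the same fact---the explicit factor of $v$ in the denominator is exactly the closure of $E_1(\Q_p)$ under $[m]$. Your version is cleaner and conceptually more transparent, at the cost of needing the (standard) fact that the formal-group filtration works at all primes regardless of reduction type; the paper's version is entirely self-contained and elementary, but is tailored to multiplication-by-$m$ rather than stating a group-theoretic principle.
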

\begin{proof}
Suppose $P=mQ$, where $Q\in \E_D(\Q)$. 
We have
\[x(P)=\frac{\phi_m(Q)}{\psi_m(Q)^2},\]
where $\psi_m$ is the $m$th division polynomial, and $\phi_m=x\psi_m^2-\psi_{m+1}\psi_{m-1}$ as usual.
The polynomials $\phi_m(x)$ and $\psi_m(x)^2$ have leading terms $x^{m^2}$ and $m^2x^{m^2-1}$ respectively.
Putting $x(Q)=\frac{u}{v}$ with $\gcd(u,v)=1$, and clearing denominators we have
\[x(Q)=\frac{u^{m^2}+vF(u,v)}{v(m^2u^{m^2-1}+vG(u,v))},\]
for some polynomials $F,G\in\Z[x,y]$.
Therefore $x(P)\in\Z$ implies $v\mid u$, so $v=1$ and $Q$ is also integral.
\end{proof}

We show that $2\E_D(\Q)$ contains no integral points.
\begin{lemma}
Suppose $P\in \E_D(\Q)$ is non-torsion.
Then $2P\not\in \E_D(\Z)$. 
\end{lemma}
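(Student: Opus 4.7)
The plan is to assume for contradiction that $P \in \E_D(\Q)$ is non-torsion with $2P = (X,Y) \in \E_D(\Z)$, extract three integer squares in arithmetic progression with common difference $D$, and then derive a contradiction from $D$ being squarefree via a short $2$-adic parity argument.

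Since $\theta : \E_D(\Q)/2\E_D(\Q) \to (\Q^*/(\Q^*)^2)^3$ is a homomorphism into a group of exponent~$2$, we have $\theta(2P) = (1,1,1)$, so each of $X-D$, $X$, $X+D$ is a square in $\Q^*$ (none vanishes, as $2P$ is non-torsion). Being integers, each is a positive perfect square, and I write $X = N^2$, $X-D = u^2$, $X+D = w^2$ with $N,u,w \in \Z_{>0}$. This gives the relation $w^2 - u^2 = 2D$, alongside $N^2 = u^2+D$. Note that positivity of $x(2P)$ is not an extra assumption: the perfect-squareness of $X$ and $X-D$ forces $X > D > 0$ automatically.

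The parities of $u$ and $w$ must agree (else $w^2-u^2$ would be odd). If both are odd, then $w-u$ and $w+u$ are both even and differ by $2u \equiv 2 \pmod 4$, so exactly one of them is divisible by $4$; hence $8 \mid (w-u)(w+u) = 2D$, giving $4 \mid D$, which contradicts squarefreeness. If instead both are even, write $u = 2U$, $w = 2W$, so $D = 2(W^2-U^2)$; squarefreeness of $D$ forces $4 \nmid D$, whence $W^2 - U^2$ is odd and $U,W$ have opposite parity. But then
\[
N^2 = X = 4U^2 + D = 2(U^2 + W^2)
\]
has $2$-adic valuation exactly~$1$ (since $U^2+W^2$ is odd), which is impossible for a perfect square.

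The only non-routine step is the first: recognising $X, X\pm D$ as integer squares by invoking triviality of $\theta(2P)$, which is already the tool used in Lemma~\ref{lemma:doublegcd}. Everything else is a clean $2$-adic parity argument that uses squarefreeness of $D$ in the most elementary form ($4 \nmid D$); no height estimates or division-polynomial computations are needed.
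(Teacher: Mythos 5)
Your proof is correct, and it takes a genuinely different route from the paper's. The paper first invokes Lemma~\ref{lem:multint} to reduce to the case where $P$ itself is integral, then works with the explicit doubling formula $x(2P)=\left(\frac{x^2+D^2}{2y}\right)^2$, pushes through a divisibility condition $4x(x+D)(x-D)\mid(x^2+D^2)^2$ to deduce $x\mid D$, and finishes with a $2$-adic case analysis on $d=-D/x$. Your proof instead works directly with $2P=(X,Y)$: you use triviality of $\theta(2P)$ (the same descent map already appearing in the proof of Lemma~\ref{lemma:doublegcd}) to write $X-D=u^2$, $X=N^2$, $X+D=w^2$ as positive integer squares, and then run a short parity argument on $u,w$ that in both cases (odd or even) contradicts either $4\nmid D$ or the evenness of $v_2(N^2)$. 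This avoids both the auxiliary lemma and the doubling formula, and it is self-contained and arguably cleaner; both routes ultimately rest on squarefreeness of $D$ plus a local argument at $2$. One cosmetic remark: in the ``both odd'' case you can shorten further by noting that odd squares are $\equiv 1\pmod 8$, so $2D=w^2-u^2\equiv 0\pmod 8$ at once, without factoring $(w-u)(w+u)$.
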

\begin{proof}
Suppose $P\in \E_D(\Q)$ and $2P\in \E_D(\Z)$, then $P$ must be an integral point by Lemma~\ref{lem:multint}.
Write $P=(x,y)$, so
\[x(2P)=\left(\frac{x^2+D^2}{2y}\right)^2.\]
Suppose $2P\in \E_D(\Z)$. 
Then 
$4y^2=4x(x+D)(x-D)\mid (x^2+D^2)^2$.
Therefore $x\mid D$ and so $x$ is squarefree. Write $d=-\frac{D}{x}$, and we have $4(d-1)(d+1)\mid x(d^2+1)^2$.
Since we assumed that $P$ is not a torsion point, $x\neq D$ and $-D<x<0$. Suppose $d$ is odd, 
then $(d^2+1)^2\equiv 4\bmod 8$ and $8\mid (d-1)(d+1)$, so $8\mid x$, but this contradicts with $x$ being squarefree.
Now suppose $d$ is even, then $(d^2+1)^2$ is odd, so $4\mid x$, which is also a contradiction.
\end{proof}

We now look at points of the form $2P+(-D,0)$ or $2P+(0,0)$.
\begin{lemma}
Suppose $P\in \E_D(\Q)$.
For each $T\in \{(-D,0),\ (0,0)\}$, we have $2P+T\in \E_D(\Z)$ if and only if $P$ is a torsion point. 
\end{lemma}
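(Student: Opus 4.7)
My approach is to prove the two implications separately. The ``if'' direction is immediate: when $P$ is torsion, $2P \in \{\OO, (0,0)\}$, and then $2P + T \in \T_D \subset \E_D(\Z)$.

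For the converse I would mimic the strategy of the preceding lemma. Write $x(2P) = A/B$ in lowest terms with $B > 0$ (and note $2P \neq \OO$ since $P$ is non-torsion, so $x(2P)$ is defined). Two facts do all the work: Lemma~\ref{lemma:doublegcd} gives $\gcd(A, D) = 1$, and the fact that $2\E_D(\R)$ lies in the identity component $\{x \geq D\} \cup \{\OO\}$ of $\E_D(\R)$ forces $A \geq DB$. The group law then specialises to the compact formulas
\[
x\bigl(2P + (0,0)\bigr) = -\frac{D^2}{x(2P)} = -\frac{D^2 B}{A},
\qquad
x\bigl(2P + (-D,0)\bigr) = -\frac{D(A - DB)}{A + DB}.
\]

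From here the two cases run in parallel. For $T = (0,0)$, integrality forces $A \mid D^2$; together with $\gcd(A, D) = 1$ this forces $A = 1$, contradicting $A \geq DB \geq D \geq 2$ (with the trivial case $D = 1$ contributing no non-torsion rational points). For $T = (-D,0)$, set $E := A + DB$; the coprimalities $\gcd(A, B) = \gcd(A, D) = 1$ yield $\gcd(E, DB) = \gcd(E, D) = 1$, so integrality tightens from $E \mid D(A - DB)$ to $E \mid A - DB = E - 2DB$, to $E \mid 2DB$, and finally to $E \mid 2$. This is incompatible with $E \geq 2DB \geq 2D \geq 4$.

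The only delicate step is the coprimality bookkeeping in the $(-D, 0)$ case: one must verify that no factor of $2$ or $D$ sneaks back into the denominator to weaken $E \mid 2$ into the much weaker $E \mid 2D$. Both cancellations are forced immediately by Lemma~\ref{lemma:doublegcd} together with $A/B$ being in lowest terms, so no new ingredient beyond the setup of the previous lemma is required.
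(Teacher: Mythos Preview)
Your proof is correct. One harmless slip: for $P\in\T_D$ you wrote $2P\in\{\OO,(0,0)\}$, but in fact $\T_D\cong(\Z/2)^2$, so $2P=\OO$ always; this does not affect the argument.

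Your route, however, is genuinely different from the paper's. The paper invokes the descent map $\theta$ to pin down the square classes of $x(2P+T)$, $x(2P+T)\pm D$, and then reads off the Diophantine equations $s^2+Dt^2=D$ (for $T=(0,0)$) and $Du^2+2v^2=D$ (for $T=(-D,0)$), each of which has only the trivial solution forcing $2P+T\in\T_D$. You instead compute the addition-by-$2$-torsion formulas
\[
x\bigl(2P+(0,0)\bigr)=-\frac{D^2}{x(2P)},\qquad
x\bigl(2P+(-D,0)\bigr)=-\frac{D\bigl(x(2P)-D\bigr)}{x(2P)+D},
\]
and combine $x(2P)=A/B\geq D$ with Lemma~\ref{lemma:doublegcd} (namely $\gcd(A,D)=1$) to run a short divisibility chase: $A\mid D^2B$ forces $A=1$, and $(A+DB)\mid D(A-DB)$ forces $A+DB\mid 2$, both impossible for $D\geq 2$. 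The two arguments encode the same arithmetic (Lemma~\ref{lemma:doublegcd} itself is proved via $\theta$), but your version stays entirely at the level of numerators and denominators and never introduces the auxiliary variables $s,t,u,v$. The paper's approach is slightly more conceptual and scales more uniformly to the $(D,0)$ coset treated next; yours is more self-contained and avoids any appeal to square classes beyond what is already packaged in Lemma~\ref{lemma:doublegcd}.
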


\begin{proof}
Notice that $\theta(2P+(0,0))=(-D,-1,D)$
and 
$\theta(2P+(-D,0))=(-2D,-D,2)$.
If $2P+(-D,0)\in \E_D(\Z)$, taking $x(2P+(0,0))=-s^2$, we see that the equation
\[s^2+Dt^2=D\]
is solvable for $s,t\in \Z$.
Similarly if $2P+(0,0)\in \E_D(\Z)$, taking $x(2P+(-D,0))=-Du^2$, then 
\[Du^2+2v^2=D\]
is solvable for $u,v\in \Z$.
The only solutions to each of these equations over the integers are given by $s=0$ and $u^2=1$. This implies that in both cases $P$ is a torsion point.
\end{proof}

The only possible non-torsion integral points in $2\E_D(\Q)+\T_D$ are in $(D,0)+2\E_D(\Q)$ and satisfies the property in the following theorem.

\begin{lemma}\label{lem:doublept}
Then there exists some $P\in \E_D(\Q)$ such that $2P+(D,0)\in \E_D(\Z)$ if and only if
the system
\begin{equation}\label{eq:2Pint}
s^2-1=2Du^2,\qquad s^2+1=2v^2
\end{equation}
is solvable for some $s, u,v\in \Z_{>0}$. Furthermore,~\eqref{eq:2Pint} has at most one solution for each $D$. If a solution $(s,u,v)$ exists, then $x(2P+(D,0))=Ds^2$, 
\begin{equation}\label{eq:sepsilon}
s^2+2uv\sqrt{D}=
(v+u\sqrt{D})^2,
\end{equation}
and $v+u\sqrt{D}$ is the fundamental solution to $v^2-Du^2=1$.
\end{lemma}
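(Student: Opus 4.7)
The plan is to analyse the $\theta$-image of a point in the coset $(D, 0) + 2\E_D(\Q)$ to derive the system \eqref{eq:2Pint}, to verify the identity \eqref{eq:sepsilon} by a direct computation, and to reduce the uniqueness and the fundamental-unit claim to Ljunggren's theorem on $X^4 - DY^2 = 1$.

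First I would compute $\theta((D, 0))$. Applying the standard convention that at a 2-torsion point the vanishing coordinate is replaced by the product of differences of the remaining roots, $\theta((D, 0)) = (2D^2, D, 2D) \equiv (2, D, 2D) \pmod{(\Q^*)^2}$. If $(X, Y) := 2P + (D, 0) \in \E_D(\Z)$ is non-torsion, then $\theta((X, Y)) = (X - D,\, X,\, X + D)$ must equal $\theta((D, 0))$. Since $D$ is squarefree and $X \equiv D \pmod{(\Q^*)^2}$, one obtains $X = Ds^2$ for a unique $s \in \Z_{>0}$. The constraints $D(s^2+1) \equiv 2D$ and $D(s^2-1) \equiv 2$ modulo squares, combined with squarefreeness of $D$ and integrality, force $s$ to be odd and produce positive integers $v, u$ with $s^2 + 1 = 2v^2$ and $s^2 - 1 = 2Du^2$. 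For the converse, given $(s, u, v) \in \Z_{>0}^3$ solving \eqref{eq:2Pint}, setting $X := Ds^2$ and $Y := 2D^2 suv$ verifies $Y^2 = X(X-D)(X+D)$ and $\theta((X, Y)) \equiv \theta((D, 0))$, placing $(X, Y)$ in $(D, 0) + 2\E_D(\Q)$ and producing the required $P$.

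The identity \eqref{eq:sepsilon} then follows by adding the two equations of \eqref{eq:2Pint} to obtain $s^2 = v^2 + Du^2$, and expanding $(v + u\sqrt{D})^2 = v^2 + Du^2 + 2uv\sqrt{D}$.

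For the uniqueness and the fundamental-unit claim, I would take the norm of \eqref{eq:sepsilon}: multiplying by the Galois conjugate $(v - u\sqrt{D})^2 = s^2 - 2uv\sqrt{D}$ yields $s^4 - D(2uv)^2 = (v^2 - Du^2)^2 = 1$, so $(s, 2uv)$ is a positive integer solution of Ljunggren's equation $X^4 - DY^2 = 1$. Writing $v + u\sqrt{D} = \eta^n$ where $\eta$ denotes the fundamental solution of $v^2 - Du^2 = 1$, the identity \eqref{eq:sepsilon} shows $s^2 + 2uv\sqrt{D} = \eta^{2n}$, so the Pell index of the Ljunggren solution is the even integer $2n$. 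Ljunggren's theorem (in its strong form) implies that the Pell indices producing a perfect-square ``rational part'' of $\eta^m$ lie in $\{1, 2\}$; since $2n$ is even, $2n = 2$ and hence $n = 1$, meaning $v + u\sqrt{D}$ is itself the fundamental solution. The same identification simultaneously gives uniqueness of $(s, u, v)$.

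The main obstacle is the reliance on the strong form of Ljunggren's theorem pinning the admissible Pell indices to $\{1, 2\}$, rather than merely bounding their number. If one restricts to the weaker cardinality bound cited in the introduction, one would need a separate argument — for instance, Bennett's Theorem on simultaneous Pell equations (applied with $a = 2,\ b = 1,\ c = 2D$) to secure uniqueness, together with a delicate descent inside the Pell cycle to rule out square ``rational parts'' of $\eta^{2n}$ for $n \geq 2$.
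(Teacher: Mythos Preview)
Your proposal is correct and follows essentially the same route as the paper's proof: compute $\theta((D,0))=(2,D,2D)$, deduce $x(2P+(D,0))=Ds^2$ and the system \eqref{eq:2Pint}, verify \eqref{eq:sepsilon} by expanding $(v+u\sqrt{D})^2$, and then reduce uniqueness and the fundamental-solution claim to the equation $s^4 - D(2uv)^2 = 1$. The only cosmetic difference is attribution: the paper invokes Cohn's theorem on $x^4 - Dy^2 = 1$ (which says precisely that $s^2 + 2uv\sqrt{D}$ equals $\eta$ or $\eta^2$ for $\eta$ the fundamental Pell solution), whereas you cite this as ``Ljunggren's theorem in its strong form'' --- so your worry in the final paragraph is unfounded, as the required strong form is exactly what the paper uses.
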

\begin{proof}
Note that $\theta(2P+(D,0))=(2,D,2D)$.
If $2P+(D,0)\in \E_D(\Z)$, then writing $x(2P+(D,0))=Ds^2$ finds us a solution $(s,u,v)$ to the system~\eqref{eq:2Pint}. Conversely if~\eqref{eq:2Pint} is solvable, it is easy to check that $Ds^2$ is the $x$-coordinate of an integral point on $\E_D$, and this point must be in the same coset of $2\E_D(\Q)$ as $(D,0)$ since $\theta$ is injective.

If~\eqref{eq:2Pint} is solvable, taking the difference of the two equations in~\eqref{eq:2Pint}, we get
$v^2-Du^2=1$.
From~\eqref{eq:2Pint}, we see that~\eqref{eq:sepsilon} holds, and also $s^4-D(2uv)^2=1$.
Cohn showed that such equation has at most one solution unless $D=1785$. More precisely, the main theorem in~\cite{Cohn} implies that $s^2+2uv\sqrt{D}$ is either $a+b\sqrt{D}$ or $(a+b\sqrt{D})^2$ if $a+b\sqrt{D}$ is the fundamental solution to $v^2-Du^2=1$. This proves the final claim.
\end{proof}

\section{Average number of integral points}
In this section we prove Theorem~\ref{thm:avgint}.
From Theorem~\ref{thm:posint}
\[
\#\E_D(\Z)\ll 4^{\rank \E_D(\Q)}.
\]
Heath-Brown~\cite[Theorem~1]{HBSelmer} proved that\footnote{To be precise, the theorem was only stated for odd $D$, but it is possible to extend the proof to even $D$.}
\[\lim_{N\rightarrow\infty}\frac{1}{\#\D_N}\sum_{D\in\D_N} 2^{k\cdot \rank \E_D(\Q)}\ll_k 1.\]
Therefore
\[
\limsup_{N\rightarrow\infty}\frac{1}{\#\D_N}\sum_{D\in\D_N}(\#\E_D(\Z))^2\ll 1.
\]
Suppose $\mathcal{G}_N\subseteq \D_N$.
By Cauchy–Schwarz inequality,
\[\sum_{D\in \mathcal{G}_N}\#\E_D(\Z)
\leq
\Big(\sum_{D\in \D_N}(\#\E_D(\Z))^2\Big)^{1/2}
(\#\mathcal{G}_N)^{1/2}
\ll (\#\D_N)^{1/2}(\#\mathcal{G}_N)^{1/2}.
\]
Therefore the contribution from any subset $\mathcal{G}_N$ of $\D_N$ of size $o(N)$ to the average of $\#\E_D(\Z)$ over $\D_N$ tends to $0$ as $N\rightarrow\infty$.

Assuming~\eqref{eq:minconj} implies that we only need to consider the contribution from the curves $\E_D$ with rank $0$ or $1$. A theorem by Le Boudec~\cite[Proposition~1]{LeBoudec} shows that
\[
\sum_{D\geq 1}\#\left\{P\in\E_D(\Z): x(P)<\frac{N^2}{(\log N)^{\kappa}}\right\}\ll \frac{N}{(\log N)^{\kappa/2-6}},
\]
where we take $\kappa>12$.
Therefore we can also exclude all $\E_D$ with any integral point $H(P)<\frac{N^2}{(\log N)^{\kappa}}$ since there are $o(N)$ of them.

If $\rank \E_D(\Q)=0$, then there are automatically no non-torsion integral points.
In the following we consider $\E_D$ such that $\rank \E_D(\Q)=1$ and any $P\in\E_D(\Z)\setminus \T_D$ satisfy $H(P)>\frac{D^2}{(\log D)^{\kappa}}$.
This removes the need to consider the points arising from cases~\eqref{case:infty},~\eqref{case:ntors},~\eqref{case:negative} in Theorem~\ref{thm:elliptic}.

Our aim is to prove the following.
\begin{theorem}\label{thm:rank1int}
Assume that $\rank \E_D(\Q)=1$ and that any $P\in\E_D(\Z)\setminus \T_D$ satisfy $H(P)>\frac{D^2}{(\log D)^{\kappa}}$.
Then \[\#(\E_D(\Z)\setminus\T_D)\leq 4.\]
\end{theorem}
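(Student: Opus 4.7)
The plan is to decompose $\E_D(\Z)\setminus\T_D$ by the eight cosets of $2\E_D(\Q)$ in $\E_D(\Q)$, handling the torsion cosets via Theorem~\ref{thm:elliptic} and the non-torsion cosets via a rank-$1$ specific orbit argument. Parts (1)--(3) of Theorem~\ref{thm:elliptic} show that among the torsion cosets only the $(D,0)$-coset can contribute non-torsion integral points, and at most one pair $\pm P$, i.e.\ at most $2$ integral points in total.

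The key ingredient for the non-torsion cosets is an orbit restriction forced by the hypothesis. If $P=(x,y)$ is a non-torsion integral point with $x > D^2/(\log D)^\kappa$, then the $2$-torsion addition formulas
\[
x(P+(0,0)) = -\frac{D^2}{x}, \qquad x(P+(\pm D,0)) = \pm\frac{D(x\pm D)}{x\mp D}
\]
yield $|x(P+T)| = O(D)$ for every $T \in \T_D \setminus \{\OO\}$, which is well below $D^2/(\log D)^\kappa$ for $D$ large. Thus $P+T$, which is non-torsion whenever $P$ is, cannot satisfy the hypothesis, and so is not integral at all. Each $\T_D$-orbit therefore contains at most one integral point, and each $\{\pm 1\}\cdot\T_D$-orbit contains at most two integral points, namely $\pm P$.

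Writing $\E_D(\Q) = \Z P_0 \oplus \T_D$, the orbits of non-torsion integral points are indexed by $n = |2k+1|$, each contributing at most one pair. For $P = (2k+1)P_0 + T \in \mathcal{S}_D$, the identity $\h(P) = n^2 h_0$ combined with \eqref{eq:height} and Lang's lower bound $h_0 \geq \tfrac{1}{4}\log D - O(1)$ from \eqref{eq:Lang} forces $n^2 \leq 8(1+\epsilon) + o(1)$; since $n$ is odd, only $n=1$ is admissible, contributing at most one small non-torsion pair. For $P \in \mathcal{L}_D$ with $n\geq 3$, I would invoke the Roth-type machinery of Section~\ref{sec:bound}: coexistence with an $n=1$ pair would force $h_0 \geq \log D - O(\log\log D)$, and the large pair would then furnish a $K$-approximation of exponent close to $8$ to some $x(S)$ with $4S$ lying in the coset of the small pair; Theorem~\ref{theorem:indp}, applied globally across the $\T_D$-linked non-torsion cosets, rules out the existence of such a second pair.

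Assembling the contributions, one obtains $\leq 2$ integral points from the $(D,0)$-coset and $\leq 2$ from a single non-torsion orbit, giving $\#(\E_D(\Z)\setminus\T_D) \leq 4$. The main obstacle lies in the last step, namely upgrading the Roth-type bound of Section~\ref{sec:bound} from per-coset (where it gives only $\leq 30$) to global across the four non-torsion cosets. This hinges on the rank-$1$ structure, which ties all coset representatives to multiples of $P_0$ and forces the Diophantine approximations from distinct cosets to collapse via the shared canonical height $h_0$; executing this bookkeeping carefully while staying below the $2[E:K]$ threshold in \eqref{eq:kappaMc} is where the difficulty concentrates.
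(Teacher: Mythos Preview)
Your decomposition by cosets, the $\T_D$-orbit argument, and the small-point height analysis all match the paper. The genuine gap is in how you handle $\mathcal{L}_D$ in the non-torsion cosets.

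You propose to rule out any second non-torsion pair using the Roth machinery of Section~\ref{sec:bound}. But Theorem~\ref{theorem:indp} and the strong gap principle only bound the \emph{number} of good approximations; they never force that number to be zero. Even with the rank-$1$ structure and $h_0 \gtrsim \log D$, the medium set $\A_1$ (points $nP_0$ with $n$ odd, $3\le n \lesssim$ an absolute constant determined by $M,c$) is not eliminated by repulsion or by Roth: the per-coset argument in Section~\ref{sec:bound} still yields only $\#\A_1\le 28$. Your proposed ``collapse via the shared canonical height $h_0$'' does not change this, because what Roth controls is the height \emph{ratio} between successive approximations, not whether any exist.

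The paper fills this gap with a tool you do not mention: an explicit division-polynomial lower bound (Lemma~\ref{lemma:divp}) combined with the Stange inequality \eqref{eq:Stange}, giving Theorem~\ref{thm:smallmult}, which says that $mP_0\notin\E_D(\Z)$ for every $1<m\le\exp(C_1\sqrt{\log D})$ once $x(P_0)>D^2/(\log D)^{\kappa}$. This wipes out all of $\A_1$. The Roth input is then used only for $\A_2$, where it gives $\#\A_2\le 3$, hence $\le 2$ by $\pm$-pairing. Consequently the paper's ``$4$'' is the generator pair $\pm P_0$ plus one possible large pair in $\A_2$; the $(D,0)$-coset pair from Theorem~\ref{thm:elliptic}\eqref{case:ptors} is handled separately for the average and is not part of this count. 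Your accounting ($2$ from the $(D,0)$-coset plus $2$ from the generator, with no large pair) is therefore the wrong split, and the missing division-polynomial argument is the essential ingredient.
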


We now demonstrate that the integral points that appear in Theorem~\ref{thm:elliptic}~\eqref{case:ptors} are rare for $D\in \D_N$ and do not contribute to the average average in Theorem~\ref{thm:avgint}. Recall that these points are classified in Lemma~\ref{lem:doublept}. 
Dirichlet class number formula for real quadratic number fields states that the class number of $\Q(\sqrt{D})$ equals
\[\frac{\sqrt{D}L(1,\chi_D)}{\log \epsilon_D},\]
where $\chi_D$ is the Kronecker symbol $\leg{D}{\cdot}$ and $\epsilon_D$ is the fundamental unit of $\Q(\sqrt{D})$. Since the class number is at least $1$, 
this gives an inequality
\[\log \epsilon_D\leq \sqrt{D}L(1,\chi_D).\]
It is well-known that $L(1,\chi_D)\ll \log D$.
Therefore together with~\eqref{eq:sepsilon}, we have
\begin{equation}\label{eq:sub}
\log s <2\log \epsilon_D\ll \sqrt{D}\log D.
\end{equation}
On the other hand, since $s^2-2v^2=-1$ and $1+\sqrt{2}$ is the fundamental unit of $\Q(\sqrt{2})$, we the possible values of $s$ is given by
\[s=\frac{1}{2}\left((1+\sqrt{2})^k+(1-\sqrt{2})^{k})\right),\]
where $k$ is any positive odd integer.
For large values of $k$, $|(1-\sqrt{2})^{k}|$ is bounded, so
\begin{equation}\label{eq:slb}
s\gg (1+\sqrt{2})^k.
\end{equation}
Putting together the inequalities~\eqref{eq:sub} and~\eqref{eq:slb}, we get 
\[
k\ll \sqrt{D}\log D.\]
Therefore for $D\in \D_N$, there are $\ll\sqrt{N}\log N$ integral points of the form $2P+(D,0)$, which does not contribute to the average in Theorem~\ref{thm:avgint}.

\subsection{Odd multiples of a generator}
It now remains to treat the points not covered by Theorem~\ref{thm:elliptic}.
Notice that if $m$ is odd, $mP+T=m(P+T)$ for any $P\in\E_D(\Q)$ and $T\in \T_D$, so any integral points not in $2\E_D(\Q)+\T_D$ are odd multiples of a generator of the free part of $\E_D(\Q)$.
By Lemma~\ref{lem:multint}, if $mP\in \E_D(\Z)$ then $P\in \E_D(\Z)$.

If $P\in \E_D(\Z)$ and $H(P)>\frac{N^2}{(\log N)^{\kappa}}$, then $x(P+(0,0))$, $x(P+(-D,0))$, $x(P+(D,0))\ll D$, therefore by assumption $P+(0,0)$, $P+(-D,0)$, $P+(D,0)\notin \E_D(\Z)$. 
Therefore it is enough to consider odd multiples of one integral point that is also generator.

We show that small multiples of a reasonably sized rational point, as assumed in Theorem~\ref{thm:rank1int} which we wish to prove, cannot be integral.
\begin{theorem}\label{thm:smallmult}
Let $\kappa>0$ and $C_1<\sqrt{\frac{4}{3}\log 2}$. Suppose $D$ is some sufficiently large squarefree integer, $P\in \E_D(\Q)$ and assume $x(P)>\frac{D^2}{(\log D)^{\kappa}}$, then $mP\notin \E_D(\Z)$ for all $1<m\leq \exp(C_1\sqrt{\log D})$.
\end{theorem}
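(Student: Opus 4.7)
The plan is to reduce to the case of prime $m$, derive a good lower bound on $\h(P)$ from the hypothesis, and then combine the canonical height multiplication formula $\h(mP)=m^2\h(P)$ with the near-equality of $\h$ and $h$ at double points given by~\eqref{eq:height_sq} to force a contradiction when $mP$ is integral.

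First, I would invoke Lemma~\ref{lem:multint} twice. Applying the lemma to $Q=(m/d)P$ with multiplier $d$, for any divisor $d\mid m$, shows that the set $\{n\ge 1 : nP\in\E_D(\Z)\}$ is closed under divisors, so the smallest $m\ge 2$ with $mP\in\E_D(\Z)$ is necessarily prime; it thus suffices to prove the theorem for $m$ prime. Applying the lemma again with $Q=P$ yields $P\in\E_D(\Z)$, so we may write $P=(a,b)$ with $a>D^2/(\log D)^{\kappa}$ an integer. Plugging $r=a$, $s=1$ into~\eqref{eq:height} gives
\[
\h(P)\ \ge\ \log a-\log\gcd(a,D)-2\log 2\ \ge\ \log D-\kappa\log\log D-2\log 2,
\]
so $\h(P)$ is bounded below by an expression of size $\log D$.

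Second, I would handle the case $m=2$ by direct analysis of the doubling formula. Writing $x(2P)=(a^2+D^2)^2/(4b^2)$ with $b^2=a(a-D)(a+D)$, the proof of~\eqref{eq:height} shows that the cancellation in this fraction is exactly $g^4$ or $4g^4$ where $g=\gcd(a,D)$. Hence integrality of $2P$ would force $a(a^2-D^2)\le 4g^4\le 4D^4$, which is impossible since $a>D^2/(\log D)^\kappa$ already gives $a(a^2-D^2)\gg D^6/(\log D)^{3\kappa}$ for $D$ sufficiently large.

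Third, for odd primes $m$ up to $\exp(C_1\sqrt{\log D})$: if $mP\in\E_D(\Z)$, then by Lemma~\ref{lemma:doublegcd} (applied to the double $2\cdot(mP)$, or directly once $mP$ is known integral) one gets a tight control of the ratio $\h(mP)/h(mP)$, and combining $\h(mP)=m^2\h(P)$ with the lower bound on $\h(P)$ above yields $\log x(mP)\ge m^2\log D-O(\log D)$. On the other hand, writing $x(mP)=\phi_m(a)/\psi_m(a)^2$ and using that for $a$ of order $D^2$ both polynomial evaluations have size expressible in terms of $D$ and $m$, one obtains an upper bound on $\log x(mP)$ from which consistency fails unless $(\log m)^2\ge\frac{4\log 2}{3}\log D+o(\log D)$, contradicting $C_1<\sqrt{4\log 2/3}$. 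The explicit constant $\sqrt{4\log 2/3}$ emerges from balancing the two-sided bounds $-2\log 2$ and $(2/3)\log 2$ of~\eqref{eq:height_sq}, propagated through the $m^2$ factor coming from $\h(mP)=m^2\h(P)$.

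The main obstacle is the effective control of the denominator $E_m$ of $x(mP)$ in lowest terms: even though $\phi_m$ and $\psi_m^2$ are coprime as polynomials, their values at the specific point $a$ can share common factors at the primes of bad reduction $p\mid 2D$ and at primes where $\tilde P$ has order exactly $m$ modulo $p$. Quantifying this cancellation sharply enough --- so that the resulting gap inequality, together with the precise constants in~\eqref{eq:height} and~\eqref{eq:height_sq}, yields the stated range $m\le\exp(C_1\sqrt{\log D})$ for every $C_1<\sqrt{4\log 2/3}$ --- is the delicate part of the argument.
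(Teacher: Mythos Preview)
Your sketch has roughly the right architecture, but it misidentifies both the crux of the argument and the source of the constant $\sqrt{\tfrac{4}{3}\log 2}$.

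The paper's proof does not pass through canonical heights. It works directly with the division polynomial: if $mP$ is integral then $v_m=1$, and Stange's bound~\eqref{eq:Stange} forces $\psi_m(a)\leq (2D)^{3m^2/4}$. This is confronted with the lower bound $\psi_m(a)>\epsilon\, m\, a^{(m^2-1)/2}$ of Lemma~\ref{lemma:divp}, which is incompatible once $a>D^2/(\log D)^\kappa$ and $m\ge 3$. Your route through heights gives a correct lower bound $\log x(mP)\ge (1-o(1))m^2\log D$, but then your ``upper bound on $\log x(mP)$'' from $x(mP)=\phi_m(a)/\psi_m(a)^2$ requires precisely a lower bound on $\psi_m(a)$, so you land on the same lemma anyway; the height layer adds nothing.

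The more serious issue is the constant. The threshold $\sqrt{\tfrac{4}{3}\log 2}$ does \emph{not} come from the $-2\log 2$ and $\tfrac{2}{3}\log 2$ in~\eqref{eq:height_sq}; those are $O(1)$ corrections that are negligible next to $m^2\log D$. The constant comes from the range of validity of Lemma~\ref{lemma:divp}. One shows by induction on the recursions~\eqref{eq:psiodd}--\eqref{eq:psieven} that $\psi_m(a)$ stays within a factor $1-\exp(C_2(\log m)^2)(D/a)^2$ of its leading term $m\,a^{(m^2-1)/2}$; the induction closes only for $C_2>\tfrac{3}{2\log 2}$ (the $3$ from the cubic factor $\sim m^3$ in the recursion, the $\log 2$ from the doubling $m\mapsto 2m$). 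Hence the lemma applies only when $(\log m)^2<\tfrac{2}{C_2}(\log a-\log D)\sim \tfrac{4\log 2}{3}\log D$, and \emph{that} inequality is the origin of $C_1$.

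Finally, your ``main obstacle'' paragraph misdiagnoses the difficulty. Cancellation between $\phi_m(a)$ and $\psi_m(a)^2$ at bad primes is already controlled uniformly by~\eqref{eq:Stange}; the real issue is whether $\psi_m(a)$ itself is close to its leading term, which is exactly what Lemma~\ref{lemma:divp} supplies and where the work lies. The reduction to prime $m$ is harmless but unnecessary.
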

We have shown that $2P$ cannot be integral, so assume $m\geq 3$. 
With the formulae
\begin{equation}\label{eq:psiodd}
\psi_{2m+1}=\psi_{m+2}\psi_m^3-\psi_{m-1}\psi_{m+1}^3,
\end{equation}
\begin{equation}\label{eq:psieven}
\psi_{2m}=\frac{\psi_m}{2y}\left(\psi_{m+2}\psi_{m-1}^2-\psi_{m-2}\psi_{m+1}^2\right),
\end{equation}
we prove the following by induction.
\begin{lemma}\label{lemma:divp}
Fix some $C_2>\frac{3}{2\log 2}$. 
Let $x>D$ such that
$(x,y)\in \E_D(\Q)$.
Then for any positive integer $m$ satisfying $C_2(\log m)^2<2(\log x-\log D)$, we have
\[\psi_m(x)>\left(1-\exp\left(C_2(\log m)^2\right)\left(\frac{D}{x}\right)^2\right)mx^{\frac{m^2-1}{2}}.\]
\end{lemma}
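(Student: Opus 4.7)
The plan is to prove by strong induction on $m$ the two-sided bound
\[\left|\psi_m(x) - m x^{(m^2-1)/2}\right| \leq \epsilon_m \cdot m x^{(m^2-1)/2}, \qquad \epsilon_m := \exp\bigl(C_2(\log m)^2\bigr)(D/x)^2,\]
since the recursions \eqref{eq:psiodd} and \eqref{eq:psieven} express $\psi_{2m+1}$ and $\psi_{2m}$ as \emph{differences} of terms of comparable size, so a lower bound requires a matching upper bound on the subtracted piece. The lower half of this statement is exactly the lemma. Base cases for $m$ up to some cutoff $M_0 = M_0(C_2)$ are to be checked directly from the explicit formulas for $\psi_1, \psi_2, \psi_3, \psi_4$; for instance, the case $m=2$ reduces to the elementary inequality $\sqrt{1-t}\geq 1-t$ on $[0,1]$ applied to $\psi_2 = 2y = 2x^{3/2}\sqrt{1-(D/x)^2}$.

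For the odd inductive step, I would substitute $\psi_k = kx^{(k^2-1)/2}(1+\delta_k)$ with $|\delta_k|\leq \epsilon_k$ into \eqref{eq:psiodd} and use the identity $(m+2)m^3 - (m-1)(m+1)^3 = 2m+1$ (verifiable by direct expansion) so that the leading terms on the right-hand side combine exactly into $(2m+1)x^{2m(m+1)}$. Estimating $(1+\delta_{m+2})(1+\delta_m)^3 - 1$ and its analogue by $(1+\epsilon_{m+2})^4 - 1 \leq 4\epsilon_{m+2}(1+O(\epsilon_{m+2}))$, the relative error of $\psi_{2m+1}$ against its main term is at most
\[\frac{(m+2)m^3 + (m-1)(m+1)^3}{2m+1}\cdot 4\epsilon_{m+2}(1+o(1)) = 4m^3\epsilon_{m+2}(1+o(1)).\]
Closing the induction therefore reduces to the inequality $4m^3\epsilon_{m+2}\leq \epsilon_{2m+1}$, which after taking logarithms and using $\log^2(2m+1) - \log^2(m+2) = 2\log 2 \cdot \log m + O(1)$ becomes
\[3\log m + O(1) \leq 2C_2\log 2 \cdot \log m + O(1).\]
The hypothesis $C_2 > 3/(2\log 2)$ provides a strictly positive margin in the leading coefficient, so choosing $M_0$ large enough absorbs all the $O(1)$ slack for $m \geq M_0$.

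The even case using \eqref{eq:psieven} proceeds identically with the analogous cancellation $(m+2)(m-1)^2 - (m-2)(m+1)^2 = 4$; after multiplication by $\psi_m/(2y)$ this yields the correct leading term $2m\cdot x^{(4m^2-1)/2}$ for $\psi_{2m}$. The only new ingredient is the $\sqrt{1-(D/x)^2}$ factor introduced by $1/y$, but on our range this satisfies $1-\rho^2 \leq \sqrt{1-\rho^2}\leq 1$ with $\rho = D/x$, contributing only an extra $O(\rho^2)$ error easily absorbed into $\epsilon_{2m}$. The main obstacle will be the careful constant bookkeeping to ensure the inductive inequality closes uniformly for all $m\geq M_0$; the threshold $C_2 > 3/(2\log 2)$ is not an artefact of the method but reflects the sharp balance between the $m^3$ multiplicative growth inherent in the recursion and the $m^{2C_2\log 2}$ shrinkage afforded by the weight $\exp(C_2\log^2 k)$.
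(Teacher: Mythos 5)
Your proof follows the same route as the paper's: induction on $m$ via the division-polynomial recursions \eqref{eq:psiodd}, \eqref{eq:psieven}, with the threshold $C_2>\frac{3}{2\log 2}$ emerging from the cubic growth $m^3$ in the recursion coefficients set against the $m^{2C_2\log 2}$ gain from the weight $\exp(C_2(\log m)^2)$. Your explicit two-sided bound $|\psi_m-mx^{(m^2-1)/2}|\leq\epsilon_m\,mx^{(m^2-1)/2}$ makes visible a point the paper leaves implicit --- since the recursions are \emph{differences}, the one-sided bound $E_m<\exp(C_2(\log m)^2)$ can only propagate if the $E_k$ are also controlled from below --- and is the cleaner bookkeeping choice.
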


\begin{proof}
Write $\psi_m(x)=(1-E_m(\frac{D}{x})^2)mx^{\frac{m^2-1}{2}}$. Assuming $E_m\left(\frac{D}{x}\right)^2<1$, we obtain from~\eqref{eq:psiodd}
\begin{equation}\label{eq:Eodd}
E_{2m+1}<E_{m-1}+3E_{m+1}+\frac{m^3(m+2)}{2m+1}(E_{m+1}+3E_m),
\end{equation}
from~\eqref{eq:psieven}
\begin{equation}\label{eq:Eeven}
E_{m+2}<\frac{1}{2}+E_m+\frac{(m - 1)^2 (m + 2)}{4}(E_{m-1}+2E_{m+2})+\frac{1}{2}(E_{m-2}+2E_{m+1}).
\end{equation}
Assuming $E_{m}<\exp\left(C_2(\log m)^2\right)$ for all $m<N$, we obtain an upper bound for $E_N<\exp\left(C_2(\log N)^2\right)$ from~\eqref{eq:Eodd} and~\eqref{eq:Eeven}.
Checking the base cases $\psi_2=2x^{3/2}(1-(\frac{D}{x})^2)^{1/2}$ and $\psi_3=3x^4(1-2(\frac{D}{x})^2-\frac{1}{3}(\frac{D}{x})^4)$ completes the induction.
\end{proof}

Write uniquely $x(mP)=\frac{u}{v_m^2}$, where $\gcd(u,v_m)=1$ and $v_m>0$. 
By~\cite[Lemma~11.4]{Stange} 
\begin{equation}\label{eq:Stange}
\log v_m\leq \log|\psi_m(x)|\leq \log v_m+\frac{1}{8}m^2\log |\Delta_D|,
\end{equation}
where $\Delta_D=(2D)^6$ is the discriminant of $\E_D$.

\begin{proof}[Proof of Theorem~\ref{thm:smallmult}]
Let $x\coloneqq x(P)>\frac{D^2}{(\log D)^{\kappa}}$.
Suppose $mP\in\E_D(\Z)$, then~\eqref{eq:Stange} reduces to
\begin{equation}\label{eq:psiub}
\psi_m(x)\leq (2D)^{\frac{3}{4}m^2}.
\end{equation}

Fix $\epsilon>0$ such that
\[\log m<\sqrt{\frac{1}{C_2}\left(\log(1-\epsilon)+2\log D-2\kappa\log \log D\right)},\]
then by Lemma~\ref{lemma:divp},
\[\psi_m(x)>\epsilon mx^{\frac{m^2-1}{2}}>\epsilon m\left(\frac{D}{(\log D)^{\kappa/2}}\right)^{m^2-1},\]
which contradicts~\eqref{eq:psiub} for sufficiently large $D$. 
\end{proof}

Now following Section~\ref{sec:bound}, we have $\#\A_1=0$ for the medium points using Theorem~\ref{thm:smallmult}, and $\#\A_2\leq 3$ for the large points. Since non-torsion integral points come in pairs $\pm P$, $\#(\A_1\cup\A_2)\leq 2$.
Therefore the possible points contributing to the upper bound in Theorem~\ref{thm:rank1int} comes from the generator and its corresponding negative point, together with the pair of large points in $\#\A_2$.

\section*{Acknowledgements}
The author would like to thank Andrew Granville for useful discussions throughout this project.
The author is supported by the European Research Council grant agreement No.\ 670239.

\end{document}